\newcommand{\beq}{\begin{equation}}
\newcommand{\eeq}{\end{equation}}
\newcommand{\bea}{\begin{eqnarray}}
\newcommand{\eea}{\end{eqnarray}}
\newcommand{\beas}{\begin{eqnarray*}}
\newcommand{\eeas}{\end{eqnarray*}}
\newtheorem{theorem}{Theorem}[section]
\newtheorem{definition}[theorem]{Definition}
\newtheorem{proposition}[theorem]{Proposition}
\newtheorem{prop}[theorem]{Proposition}
\newtheorem{corollary}[theorem]{Corollary}
\newtheorem{lemma}[theorem]{Lemma}
\newtheorem{remark}[theorem]{Remark}
\newtheorem{example}[theorem]{Example} 
\newtheorem{examples}[theorem]{Examples}
\newtheorem{foo}[theorem]{Remarks}
\newenvironment{Example}{\begin{example}\rm}{\end{example}}
\newenvironment{proof}{\addvspace{\medskipamount}\par\noindent{\it
Proof}.}
{\unskip\nobreak\hfill$\Box$\par\addvspace{\medskipamount}}
\newcommand{\p}{\partial}
\newcommand{\ee}{\ell}
\newcommand{\bM}{\mathbb M}
\newcommand{\rL}{\overline{\Delta}_{\mathcal{H}} }
\newcommand{\Dh}{{\Delta}_{\mathcal{H}}}
\newcommand{\Dv}{{\Delta}_{\mathcal{V}}}
\newcommand{\Ho}{\mathcal H}
\newcommand{\V}{\mathcal V}
\newcommand{\M}{\mathbb M}
\newcommand{\B}{\mathbb B}
\newcommand{\rp}{\overline p}
\newcommand{\bS}{\mathbb S}
\newcommand{\R}{\mathbb R}
\newcommand{\ve}{\varepsilon}
\newcommand{\Ric}{\mathfrak Ric}
\newcommand{\ch}{\mathcal H}
\newcommand{\J}{\mathfrak J}
\newcommand{\ep}{\varepsilon}
\title{Sub-Laplacians and hypoelliptic operators on totally geodesic Riemannian foliations}
\author{Fabrice Baudoin\footnote{fbaudoin@purdue.edu}}
\date{Department of Mathematics, Purdue University}
\begin{document}
\maketitle

\begin{abstract}
 These notes are the basis of a course given at the Institut Henri Poincar\'e in September 2014. We survey some recent results related to the geometric analysis of hypoelliptic diffusion operators on totally geodesic Riemannian foliations. We also give new applications to the study of hypocoercive estimates for Kolmogorov type operators.
\end{abstract}

\tableofcontents

\section{Introduction}
 
It is a fact that many interesting hypoelliptic diffusion operators may be studied by introducing a well-chosen Riemannian foliation. In particular, several sub-Laplacians on  sub-Riemannians manifolds often appear as  horizontal Laplacians of a foliation and several of the Kolmogorov type hypoelliptic diffusion operators which are used in the theory of kinetic  equations appear as the sum of the vertical Laplacian of a foliation and of a first order term.  

\

The  goal of the present notes is to survey some geometric analysis tools to study this kind of diffusion operators. We specially would like to stress the importance of subelliptic Bochner's type identities in this framework and show how they can be used to deduce a variety of results ranging from topological informations on a sub-Riemannian manifold to hypocoercive estimates and convergence to equilibrium for kinetic Fokker-Planck equations. As an illustration of those methods we give a proof of a  sub-Riemannian Bonnet-Myers type compactness theorem (Section 6) and study a version of the Bakry-\'Emery criterion for Kolmogorov type operators (Section 7).

\

For  the proof of the sub-Riemannian Bonnet-Myers theorem we adapt an approach developed in a joint program with Nicola Garofalo. The object of this program initiated in \cite{BG0,BG} has been to propose a generalized curvature dimension inequality that fits a number of interesting subelliptic situations including the ones considered in these notes. While some of them will be discussed here, the numerous applications of the generalized curvature dimension inequality  are beyond the scope of these notes and we will only give the relevant pointers to the literature.  We focus here  more on the Bonnet-Myers theorem and the geometric framework in which this  curvature-dimension estimate is available.

\

Concerning Section 7 most of the material is actually new,  though the main ideas originate from \cite{baudoin-bakry}.

\

These notes are organized as follows. 

\

\textbf{Section 2:}  We  introduce the concept of Riemannian foliation and define the horizontal and vertical Laplacians. Basic theorems like the B\'erard-Bergery-Bourguignon commutation theorem will be proved.

\

\textbf{Section 3:} We study  in details some examples of Riemannian foliations with totally geodesic leaves that can be seen as model spaces. Besides the Heisenberg group, these examples are  associated to the Hopf fibrations  on the sphere. We  give explicit expressions for the radial parts of the horizontal and vertical Laplacians and for the horizontal heat kernels of these model spaces.

\

\textbf{Section 4:} We prove a transverse Weitzenb\"ock formula for the horizontal Laplacian of a Riemannian foliation with totally geodesic leaves. It is the main geometric analysis tool for the study of the horizontal Laplacian. As a first consequence of this Weitzenb\"ock formula, we prove that if natural assumptions are satisfied, then the horizontal Laplacian satisfies the generalized curvature dimension inequality. As a second consequence, we will prove sharp lower bounds for the first eigenvalue of the horizontal Laplacian.

\

\textbf{Section 5:} In this section, we introduce the horizontal semigroup of a Riemannian foliation with totally geodesic leaves and discuss fundamental questions like essential self-adjointness for the horizontal Laplacian and stochastic completeness. We also prove Li-Yau gradient bounds for this horizontal semigroup.

\

\textbf{Section 6:} By using semigroup methods, we prove  a sub-Riemannian Bonnet-Myers  theorem in the context of  Riemannian foliations with totally geodesic leaves. 

\

\textbf{Section 7:} This last section is an introduction to the analysis of hypoelliptic Kolmogorov type operators on Riemannian foliations. We  mainly focus on the problem of convergence to equilibrium  for the parabolic equation associated to the operator and on methods to prove hypocoercive estimates. The example of the kinetic Fokker-Planck equation is given as an illustration.

\section{Riemannian foliations and their Laplacians}

We review first some basic facts about the geometry of Riemannian foliations that will be needed in the sequel. In particular, we define the horizontal and vertical Laplacians on such foliations and show that they commute if the metric is bundle like and the foliation totally geodesic. For further details about the geometry of Riemannian submersions we refer to Chapter 9 in \cite{Besse} and for more informations  about general Riemannian foliations, we refer to the book by Tondeur \cite{Tondeur}.

\subsection{Riemannian submersions}

Let $(\M , g)$  and  $(\B,j)$ be  smooth and connected Riemannian manifolds.
\begin{definition}
A smooth surjective map $\pi: (\M , g)\to (\B,j)$ is called a Riemannian submersion if its derivative maps $T_x\pi : T_x \M \to T_{\pi(x)} \B$ are orthogonal projections, i.e. for every $ x \in \M$, the map $ T_{x} \pi (T_{x} \pi)^*: T_{p(x)}  \B \to T_{p(x)} \B$ is the identity.
\end{definition}

\begin{Example}(\textbf{Warped products}) Let $(\M_1 , g_1)$  and  $(\M_2,g_2)$ be   Riemannian manifolds and $f$ be a smooth and positive function on $\M_1$. Then the first projection $(\M_1 \times \M_2,g_1 \oplus f g_2) \to (\M_1, g_1)$ is a Riemannian submersion.
\end{Example}

\begin{Example}(\textbf{Quotient by an isometric action})
Let $(\M , g)$ be a Riemannian manifold and $\mathbb G$ be a closed subgroup of the isometry group of $(\M , g)$. Assume that the projection map $\pi$ from $\M$ to the quotient space $\M /\mathbb{G}$ is a smooth submersion. Then there exists a unique Riemannian metric $j$ on $\M /\mathbb{G}$ such that $\pi$ is a Riemannian submersion.
\end{Example}

If $\pi$ is a Riemannian submersion and $b \in \B$, the set  $\pi^{-1}(\{ b \})$ is called a fiber. 

\

For $ x \in \M$, $\mathcal{V}_x =\mathbf{Ker} (T_x\pi)$ is called the vertical space at $x$. The orthogonal complement of $\mathcal{H}_x$ shall be denoted $\mathcal{H}_x$ and will be referred to as the horizontal space at $x$. We have an orthogonal decomposition
\[
T_x \M=\mathcal{H}_x \oplus \mathcal{V}_x
\]
and a corresponding splitting of the metric
\[
g=g_{\mathcal{H}} \oplus g_{\mathcal{V}}.
\]
The vertical distribution $\mathcal V$ is of course integrable since it is the tangent distribution to the fibers, but the horizontal distribution is in general not integrable. Actually, in all the situations we will consider  the horizontal distribution is everywhere bracket-generating in the sense that for every $x \in \M$, $\mathbf{Lie} (\mathcal{H}) (x)=T_x \M$. In that case it is natural to study the sub-Riemannian geometry of the triple $(\M, \mathcal{H}, g_{\mathcal{H}})$. As we will see, many interesting examples of sub-Riemannian structures arise in this framework and this is really the situation which is interesting for us.

\

We shall mainly be interested in submersion with totally geodesic fibers.

\begin{definition}
A Riemannian submersion $\pi: (\M , g)\to (\B,j)$ is said to have totally geodesic fibers if for every $b \in \B$, the set $\pi^{-1}(\{ b \})$ is a totally geodesic submanifold of $\M$.
\end{definition}

\begin{Example}(\textbf{Quotient by an isometric action})
Let $(\M , g)$ be a Riemannian manifold and $\mathbb G$ be a closed one-dimensional subgroup of the isometry group of $(\M , g)$ which is generated by a complete Killing vector field $X$. Assume that the projection map $\pi$ from $\M$ to $\M /\mathbb{G}$ is a smooth submersion.  Then the fibers are totally geodesic if and only if the integral curves of $X$ are geodesics, which is the case if and only if $X$ has a constant length.
\end{Example}

\begin{Example}(\textbf{Principal bundle})
Let $\M$ be a principal bundle over $\B$ with fiber $\mathbf F$ and structure group $\mathbb G$. Then, given a Riemannian metric $j$ on $\B$, a $\mathbb G$-invariant  metric $k$ on $ \mathbf F$ and a $\mathbb G$ connection form $\theta$, there exists a unique Riemannian metric $g$ on $\M$ such that the bundle projection map $\pi: \M \to \B$ is a Riemannian submersion with totally geodesic fibers isometric to $(\mathbf{F},k)$ and such that the horizontal distribution of $\theta$ is the orthogonal complement of the vertical distribution. We refer to \cite{Vilms}, page 78, for a proof. In the case of the tangent bundle of a Riemannian manifold, the construction yields the Sasaki metric on the tangent bundle.
\end{Example}

As we will see, for a Riemannian submersion with totally geodesic fibers, all the fibers are isometric. The argument, due to Hermann \cite{Hermann} relies on the notion of basic vector field that we now introduce.

\

Let $\pi: (\M , g)\to (\B,j)$ be a Riemannian submersion. A vector field $X \in \Gamma^\infty(T\M)$ is said to be projectable if there exists a smooth vector field $\overline{X}$ on $\B$ such that for every $x \in \M$,  $T_x \pi ( X(x))= \overline {X} (\pi (x))$. In that case, we say that $X$ and $\overline{X}$ are $\pi$-related.

\begin{definition}
A vector field $X$ on $\M$ is called basic if it is projectable and horizontal.
\end{definition}

If  $\overline{X}$ is a smooth vector field on $\B$, then there exists a unique basic vector field $X$ on $\M$ which is $\pi$-related to $\overline{X}$. This vector is called the lift of $\overline{X}$.

\

Notice that if $X$ is a basic vector field and $Z$ is a vertical vector field, then $T_x\pi ( [X,Z](x))=0$ and thus $[X,Z]$ is a vertical vector field. The following result is due to Hermann \cite{Hermann}.

\begin{proposition}\label{isometry}
The submersion $\pi$ has totally geodesic fibers if and only if the flow generated by any basic vector field induces an isometry between the fibers.
\end{proposition}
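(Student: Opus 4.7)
The plan is to translate the isometry property of the flow into an infinitesimal condition on the Lie derivative of the metric, and then identify the resulting tensor with the second fundamental form of the fibers, using that the horizontal distribution is spanned pointwise by basic vector fields.

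First I would record the structural fact. If $X$ is basic with projection $\overline{X}$, its flow $\phi_t$ satisfies $\pi\circ\phi_t=\overline{\phi}_t\circ\pi$, where $\overline{\phi}_t$ is the flow of $\overline{X}$. Hence $\phi_t$ sends the fiber over $b$ diffeomorphically to the fiber over $\overline{\phi}_t(b)$. The statement ``$\phi_t$ is an isometry between these fibers'' is equivalent, differentiating at $t=0$, to the condition
\[
(\mathcal L_X g)(V,W)=0 \quad \text{for every pair of vertical vector fields } V,W.
\]

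Next I would compute this Lie derivative via the Levi-Civita connection. Using $[X,V]=\nabla_X V-\nabla_V X$ one finds, as in the standard derivation of a Killing-type identity,
\[
(\mathcal L_X g)(V,W)=g(\nabla_V X,W)+g(V,\nabla_W X).
\]
Since $X$ is horizontal and $W$ is vertical, $g(X,W)=0$, so $V\cdot g(X,W)=0$ yields $g(\nabla_V X,W)=-g(X,\nabla_V W)$, and similarly for the other term. Therefore
\[
(\mathcal L_X g)(V,W)=-g\bigl(X,\nabla_V W+\nabla_W V\bigr).
\]
Because $\mathcal V$ is integrable, $[V,W]$ is vertical, so $g(X,[V,W])=0$, and hence $g(X,\nabla_V W+\nabla_W V)=2\,g(X,\nabla_V W)$. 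Denoting by $II(V,W)=(\nabla_V W)^{\mathcal H}$ the (symmetric) second fundamental form of the fibers, this gives the key identity
\[
(\mathcal L_X g)(V,W)=-2\,g\bigl(X,II(V,W)\bigr)
\]
for every basic $X$ and vertical $V,W$.

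From this identity both implications drop out. If the fibers are totally geodesic, then $II\equiv 0$ on the fibers, so $(\mathcal L_X g)(V,W)=0$ for all basic $X$ and all vertical $V,W$, and the flow of $X$ is therefore an isometry between fibers (wherever defined). Conversely, if every basic $X$ generates a flow inducing isometries between fibers, then the same identity shows $g(X,II(V,W))=0$ for all basic $X$. Since basic vector fields span $\mathcal H_x$ at each $x\in\M$ (they are the lifts of an arbitrary local frame on $\B$), we conclude $II(V,W)=0$ for all vertical $V,W$, i.e.\ the fibers are totally geodesic.

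The main obstacle is really the bookkeeping in the second step: one has to make sure the horizontal/vertical splittings, the symmetry of $II$, and the integrability of $\mathcal V$ are all used correctly in order to arrive cleanly at the identity $(\mathcal L_X g)(V,W)=-2g(X,II(V,W))$, after which the proposition is immediate.
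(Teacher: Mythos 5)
Your proof is correct and follows essentially the same route as the paper: compute $(\mathcal L_X g)(V,W)$ for a basic $X$ and vertical $V,W$ via the Levi-Civita connection, use orthogonality of $\mathcal H$ and $\mathcal V$ to rewrite it as $-2g(X,(\nabla_V W)^{\mathcal H})$, and conclude that vanishing of this quantity for all basic $X$ is equivalent to the vanishing of the second fundamental form of the fibers. Your write-up is if anything slightly more careful than the paper's, in that you justify the symmetrization step through torsion-freeness and note that basic fields span $\mathcal H$ pointwise.
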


\begin{proof}
We denote by $D$ the Levi-Civita connection on $\M$.  Let $X$ be a basic vector field. If $Z_1,Z_2$ are vertical fields, the Lie derivative of $g$ with respect to $X$ can be computed as
\[
(\mathcal{L}_X g)(Z_1,Z_2)=\langle D_{Z_1} X ,Z_2 \rangle +\langle D_{Z_2} X ,Z_1 \rangle.
\]
Because $X$ is orthogonal to $Z_2$, we now have $\langle D_{Z_1} X ,Z_2 \rangle=-\langle  X ,D_{Z_1} Z_2 \rangle$. Similarly $\langle D_{Z_2} X ,Z_1 \rangle=-\langle  X ,D_{Z_2} Z_1 \rangle$. We deduce
\begin{align*}
(\mathcal{L}_X g)(Z_1,Z_2)& =-\langle  X ,D_{Z_1} Z_2 +D_{Z_2} Z_1 \rangle \\
 &=-2 \langle  X ,D_{Z_1} Z_2 \rangle.
\end{align*}
Thus the flow generated by any basic vector field induces an isometry between the fibers if and only if $D_{Z_1} Z_2$ is always vertical which is equivalent to the fact that the fibers are totally geodesic submanifolds.
\end{proof}

\subsection{The horizontal and vertical Laplacians}

Let $\pi: (\M , g)\to (\B,j)$ be a Riemannian submersion.  If $f \in C^\infty(\M)$ we define its vertical gradient $\nabla_{\mathcal{V}}$ as the projection of its gradient onto the vertical distribution and its horizontal gradient $\nabla_{\mathcal{H}}$ as the projection of the gradient onto the horizontal distribution. We define then the vertical Laplacian  $\Delta_{\mathcal{V}}$ as the generator of the Dirichlet form
\[
\mathcal{E}_{\mathcal{V}}(f,g)=-\int_\M \langle \nabla_{\mathcal{V}} f , \nabla_{\mathcal{V}} g \rangle d\mu,
\]
where $\mu$ is the Riemannian volume measure on $\M$. Similarly, we define  the horizonal Laplacian  $\Delta_{\mathcal{H}}$ as the generator of the Dirichlet form
\[
\mathcal{E}_{\mathcal{H}}(f,g)=-\int_\M \langle \nabla_{\mathcal{H}} f , \nabla_{\mathcal{V}} g \rangle d\mu.
\]
If $X_1,\cdots,X_n$ is a local orthonormal frame of basic vector fields and $Z_1,\cdots,Z_m$ a local orthonormal frame of the vertical distribution, then we have
\[
\Delta_{\mathcal{H}}=-\sum_{i=1}^n X_i^* X_i
\]
and
\[
\Delta_{\mathcal{V}}=-\sum_{i=1}^m Z_i^* Z_i,
\]
where the adjoints are understood in $L^2(\mu)$. Classically, we have
\[
X_i^*=-X_i+\sum_{k=1}^n \langle D_{X_k} X_k, X_i\rangle +\sum_{k=1}^m \langle D_{Z_k} Z_k, X_i\rangle,
\]
where $D$ is the Levi-Civita connection. As a consequence, we obtain
\[
\Delta_{\mathcal{H}}=\sum_{i=1}^n X_i^2 -\sum_{i=1}^n (D_{X_i}X_i)_{\mathcal{H}} -\sum_{i=1}^m (D_{Z_i}Z_i)_{\mathcal{H}},
\]
where $(\cdot)_{\mathcal{H}}$ denotes the horizontal part of the vector. In a similar way we obviously have
\[
\Delta_{\mathcal{V}}=\sum_{i=1}^m Z_i^2 -\sum_{i=1}^n (D_{X_i}X_i)_{\mathcal{V}} -\sum_{i=1}^m (D_{Z_i}Z_i)_{\mathcal{V}}.
\]
We can observe that the Laplace-Beltrami operator $\Delta$ of $\M$ can be written
\[
\Delta=\Delta_{\mathcal{H}}+\Delta_{\mathcal{V}}.
\]
It is worth noting that, in general, $\Delta_{\mathcal{H}}$ is not the lift of the Laplace-Beltrami operator $\Delta_\mathbb{B}$ on  $\B$. Indeed, let us denote by $\overline{X}_1,\cdots,\overline{X}_n$ the vector fields on $\B$ which are $\pi$-related to $X_1,\cdots,X_n$ . We have
\[
\Delta_{\B}=\sum_{i=1}^n \overline{X}_i^2 -\sum_{i=1}^n D_{\overline{X}_i}\overline{X}_i.
\]
Since it is easy to check that  $ D_{\overline{X}_i}\overline{X}_i$ is $\pi$-related to $(D_{X_i}X_i)_{\mathcal{H}}$, we deduce that $\Delta_{\mathcal{H}}$ lies above $\Delta_{\B}$, i.e. for every $f \in C^\infty(\B)$, $\Delta_{\mathcal{H}} (f \circ \pi)=(\Delta_{\B} f )\circ \pi$ , if and only if the vector
\[
T=\sum_{i=1}^m D_{Z_i}Z_i
\]
is vertical. This condition is equivalent to the fact that the mean curvature of each fiber is zero, or in other words that the fibers are minimal submanifolds of $\M$. This happens for instance for submersions with totally geodesic fibers.

\

We also note that from H\"ormander's theorem, the operator $\Delta_{\mathcal{H}}$ is subelliptic if the horizontal distribution is bracket generating. Of course, the vertical Laplacian is never subelliptic because the vertical distribution is always integrable.

\

The following result, though simple, will turn out  to be extremely useful in the sequel when dealing with curvature dimension estimates and functional inequalities.

\begin{theorem}\label{inter_commutation}
The Riemannian submersion $\pi$ has totally geodesic fibers if and only if for every $f \in C^\infty(\M)$,
\[
\langle \nabla_{\mathcal{H}} f , \nabla_{\mathcal{H}} \| \nabla_{\mathcal{V}} f \|^2 \rangle=\langle \nabla_{\mathcal{V}} f , \nabla_{\mathcal{V}} \| \nabla_{\mathcal{H}} f \|^2 \rangle
\]
\end{theorem}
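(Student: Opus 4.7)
The plan is to work in a local orthonormal frame of basic vector fields $X_1,\dots,X_n$ and vertical vector fields $Z_1,\dots,Z_m$, expand both sides of the proposed identity in these coordinates, and reduce the difference to a pointwise expression involving the second fundamental form of the fibers. Since $\nabla_{\mathcal H}f=\sum_i (X_i f)\,X_i$ and $\|\nabla_{\mathcal V}f\|^2=\sum_j (Z_jf)^2$ (and the symmetric statements on the other side), a straightforward Leibniz expansion should give
\[
\langle \nabla_{\mathcal H}f,\nabla_{\mathcal H}\|\nabla_{\mathcal V}f\|^2\rangle-\langle \nabla_{\mathcal V}f,\nabla_{\mathcal V}\|\nabla_{\mathcal H}f\|^2\rangle = 2\sum_{i,j}(X_if)(Z_jf)\,[X_i,Z_j]f,
\]
reducing the identity to a statement about mixed commutators of basic and vertical fields.

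\textbf{Identifying the second fundamental form.} The key structural ingredient, already recorded in the excerpt, is that $[X_i,Z_j]$ is vertical, so $[X_i,Z_j]f=\sum_k\langle[X_i,Z_j],Z_k\rangle(Z_kf)$. This turns the right-hand side into a triple sum whose coefficient $(Z_jf)(Z_kf)$ is symmetric in $j,k$, so only the $j,k$-symmetrization of $\langle[X_i,Z_j],Z_k\rangle$ contributes. Using the orthogonality $\langle X_i,Z_k\rangle=0$ (which gives $\langle D_{Z_j}X_i,Z_k\rangle=-\langle X_i,D_{Z_j}Z_k\rangle$), the orthonormality relation $\langle D_{X_i}Z_j,Z_k\rangle+\langle D_{X_i}Z_k,Z_j\rangle=X_i\langle Z_j,Z_k\rangle=0$, and the integrability of the vertical distribution (which gives $(D_{Z_j}Z_k)_{\mathcal H}=(D_{Z_k}Z_j)_{\mathcal H}$), the symmetric part is identified with $\langle X_i,(D_{Z_j}Z_k)_{\mathcal H}\rangle$. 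The difference therefore rewrites, by bilinearity, as
\[
2\,\langle \nabla_{\mathcal H}f,\ II(\nabla_{\mathcal V}f,\nabla_{\mathcal V}f)\rangle,
\]
where $II(Z,Z'):=(D_Z Z')_{\mathcal H}$ is the second fundamental form of the fibers.

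\textbf{Both directions and main obstacle.} If the fibers are totally geodesic then $II\equiv 0$ and the identity follows immediately. Conversely, at any $x\in\M$ one can choose $f$ so that $\nabla_{\mathcal H}f(x)$ and $\nabla_{\mathcal V}f(x)$ are arbitrary prescribed horizontal and vertical vectors $v$ and $w$, so vanishing of the above expression for every $f$ forces $\langle v,II(w,w)\rangle=0$ for all such $v,w$; hence $II(w,w)=0$ for every vertical $w$ and, by polarization, $II\equiv 0$, giving totally geodesic fibers. The main obstacle I anticipate is the symmetrization step that cleanly identifies the commutator expression with $II$: keeping track of the three Levi-Civita identities and of the horizontal projection is where the bookkeeping is most delicate. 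Once that identification is in place, both implications follow at once from Hermann's characterization of totally geodesic fibers as those with vanishing second fundamental form.
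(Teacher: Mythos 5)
Your proof is correct, and its first half (the Leibniz expansion in a basic/vertical orthonormal frame yielding $2\sum_{i,j}(X_if)(Z_jf)[X_i,Z_j]f$) is exactly the computation the paper performs. Where you diverge is in the second half: the paper stops at the condition $\sum_j (Z_jf)([X,Z_j]f)=0$ for every basic $X$, interprets it as saying that the flow of $X$ preserves the vertical metric, and then invokes Hermann's Proposition on flows of basic fields inducing isometries between fibers. You instead push the frame computation one step further, using $[X_i,Z_j]\in\Gamma^\infty(\mathcal V)$, the three Levi-Civita identities you list, and the symmetry of the coefficient $(Z_jf)(Z_kf)$ to identify the whole expression with $2\langle\nabla_{\mathcal H}f,\,II(\nabla_{\mathcal V}f,\nabla_{\mathcal V}f)\rangle$, and then conclude by the arbitrariness of $df(x)$ at a point together with polarization. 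Your symmetrization is right (the term $\langle D_{X_i}Z_j,Z_k\rangle+\langle D_{X_i}Z_k,Z_j\rangle$ dies by orthonormality, and the remaining term becomes $\langle X_i,(D_{Z_j}Z_k)_{\mathcal H}\rangle$ after the orthogonality trick plus integrability of $\mathcal V$), and the converse via choosing $f$ with prescribed gradient is clean. What your route buys is self-containedness: you never need Hermann's flow-isometry theorem, only the elementary fact that a submanifold is totally geodesic iff its second fundamental form vanishes — note that this last equivalence is the standard definition, not really ``Hermann's characterization'' as you call it (Hermann's result in the paper is the statement about flows of basic fields). What the paper's route buys is brevity, since Hermann's proposition has already been proved two pages earlier and absorbs precisely the Levi-Civita bookkeeping you carry out by hand.
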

 
 \begin{proof}
 If $X_1,\cdots,X_n$ is a local orthonormal frame of basic vector fields and $Z_1,\cdots,Z_m$ a local orthonormal frame of the vertical distribution, then we easily compute that
 \[
 \langle \nabla_{\mathcal{H}} f , \nabla_{\mathcal{H}} \| \nabla_{\mathcal{V}} f \|^2 \rangle-\langle \nabla_{\mathcal{V}} f , \nabla_{\mathcal{V}} \| \nabla_{\mathcal{H}} f \|^2 \rangle=2\sum_{i=1}^n \sum_{j=1}^m (X_i f) (Z_j f) ([X_i,Z_j] f).
 \]
 As a consequence, 
 \[
\langle \nabla_{\mathcal{H}} f , \nabla_{\mathcal{H}} \| \nabla_{\mathcal{V}} f \|^2 \rangle=\langle \nabla_{\mathcal{V}} f , \nabla_{\mathcal{V}} \| \nabla_{\mathcal{H}} f \|^2 \rangle
\]
if and only if for every basic vector field $X$,
\[
 \sum_{j=1}^m  (Z_j f) ([X,Z_j] f)=0.
 \]
This condition is equivalent to the fact that the flow generated by  $X$ induces an isometry between the fibers, and so from Hermann's Theorem \ref{isometry} this equivalent to the  fact that the fibers are totally geodesic.
 \end{proof}
 
 The second commutation result  that characterizes totally geodesic submersions is due to B\'erard-Bergery and Bourguignon \cite{BeBo}.
 
 \begin{theorem}\label{commutation2}
The Riemannian submersion $\pi$ has totally geodesic fibers if and only if any basic vector field $X$ commutes with the vertical Laplacian $\Delta_{\mathcal{V}}$. In particular, if $\pi$ has totally geodesic fibers, then for every $f \in C^\infty(\M)$,
\[
\Delta_{\mathcal{H}} \Delta_{\mathcal{V}} f=\Delta_{\mathcal{V}} \Delta_{\mathcal{H}} f.
\]
\end{theorem}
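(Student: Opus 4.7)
The plan is to use Proposition~\ref{isometry} to translate ``totally geodesic'' into the algebraic condition $(\mathcal{L}_X g)|_{\V}=0$ for every basic vector field $X$, and match this with a principal-symbol computation for the commutator $[X,\Delta_{\V}]$. The identity $[\Delta_{\Ho},\Delta_{\V}]=0$ will then follow by expressing $\Delta_{\Ho}$ in terms of basic vector fields alone, which is possible precisely under the totally geodesic hypothesis.

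First, since $X$ is basic and $Z_i$ vertical, $[X,Z_i]$ is vertical (recorded just before Proposition~\ref{isometry}), so in a local orthonormal vertical frame $Z_1,\dots,Z_m$ I write $[X,Z_i]=\sum_j a_{ij}Z_j$ where $a_{ij}=g([X,Z_i],Z_j)$. A one-line computation using $g(Z_i,Z_j)=\delta_{ij}$ gives $(\mathcal{L}_X g)(Z_i,Z_j)=-(a_{ij}+a_{ji})$, so Proposition~\ref{isometry} identifies the totally geodesic condition with the antisymmetry of the matrix $(a_{ij})$ for every basic $X$.

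For the implication [$X$ commutes with $\Delta_{\V}$] $\Rightarrow$ [totally geodesic], I inspect the principal symbol of $[X,\Delta_{\V}]$. Writing $\Delta_{\V}=\sum_i Z_i^2-S$ with $S$ a vertical first-order term, the expansion $[X,\sum_i Z_i^2]=\sum_i([X,Z_i]Z_i+Z_i[X,Z_i])$ yields the second-order piece $\sum_{i,j}a_{ij}(Z_iZ_j+Z_jZ_i)$, whose principal symbol on vertical covectors is $\sum_{i,j}(a_{ij}+a_{ji})\xi_i\xi_j$. Vanishing of $[X,\Delta_{\V}]$ as an operator forces this symbol to vanish identically, so $a_{ij}+a_{ji}=0$ and the totally geodesic property follows.

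For the reverse direction I argue by the flow. By Proposition~\ref{isometry} the flow $\phi_t^X$ of a basic $X$ restricts to isometries between fibers, while projectability of $X$ ensures $\phi_t^X$ sends $F_b$ onto $F_{\phi_t^{\bar X}(b)}$. A short auxiliary step identifies $\Delta_{\V}$ with the intrinsic Laplace--Beltrami operator of each fiber (using the fiberwise decomposition of the Riemannian volume on $\M$ together with the O'Neill identity $(D_XX)_{\V}=0$ for horizontal $X$, which makes the $\sum(D_{X_i}X_i)_{\V}$ correction in the formula for $\Delta_{\V}$ disappear). Since fiber isometries intertwine Laplace--Beltrami operators, $(\phi_t^X)^*\Delta_{\V}=\Delta_{\V}(\phi_t^X)^*$; differentiating at $t=0$ yields $[X,\Delta_{\V}]=0$. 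For the final statement, under the totally geodesic hypothesis $\sum(D_{Z_i}Z_i)_{\Ho}=0$ (totally geodesic fibers are minimal, as noted in the excerpt) and $\sum(D_{X_i}X_i)_{\Ho}$ is basic (being $\pi$-related to a vector field on $\B$), so $\Delta_{\Ho}$ reduces to a polynomial in basic vector fields, hence commutes with $\Delta_{\V}$. I expect the reverse direction to be the main obstacle, since the supplementary identification of $\Delta_{\V}$ with the intrinsic fiber Laplace--Beltrami operator is non-trivial; a purely algebraic alternative would verify directly that the lower-order terms in $[X,\Delta_{\V}]$ also cancel once $(a_{ij})$ is antisymmetric, but this is more computation-heavy.
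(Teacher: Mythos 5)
Your proposal is correct and follows essentially the same route as the paper: the converse direction via the vanishing of the second-order (principal symbol) part of $[X,\Delta_{\mathcal{V}}]$ combined with Hermann's Proposition \ref{isometry}, the forward direction via the flow of a basic field acting by fiber isometries, and the final commutation from the expression of $\Delta_{\mathcal{H}}$ in a basic frame with the minimality of totally geodesic fibers. Your write-up merely makes explicit two points the paper leaves implicit, namely the identification $(\mathcal{L}_Xg)(Z_i,Z_j)=-(a_{ij}+a_{ji})$ and the identification of $\Delta_{\mathcal{V}}$ with the intrinsic fiber Laplacian (via $(D_XX)_{\mathcal{V}}=0$ for basic $X$), both of which are correct.
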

 
 \begin{proof}
 Assume that the submersion is totally geodesic. Let $X$ be a basic vector field and $\xi_t$ be the flow it generates. Since $\xi$ induces an isometry between the fibers, we have
 \[
 \xi_t^* ( \Delta_{\mathcal{V}})= \Delta_{\mathcal{V}}.
 \]
 Differentiating at $t=0$ yields $[X,\Delta_{\mathcal{V}}]=0$. 
 
 Conversely, assume that for every basic field $X$, $[X,\Delta_{\mathcal{V}}]=0$.  Let $X_1,\cdots,X_n$ be a local orthonormal frame of basic vector fields and $Z_1,\cdots,Z_m$ be a local orthonormal frame of the vertical distribution. The second order part of the operator $[X,\Delta_{\mathcal{V}}]$ must be zero. Given the expression of $ \Delta_{\mathcal{V}}$, this implies
 \[
 \sum_{i=1}^m [X,Z_i] Z_i=0.
 \]
 So $X$ leaves the symbol of $\Delta_{\mathcal{V}}$ invariant which is the metric on the vertical distribution. This implies that the flow generated by $X$ induces isometries between the fibers.
 
 \
 
 Finally, as we have seen, if the submersion is totally geodesic then in a local basic orthornomal frame
 \[
 \Delta_{\mathcal{H}}=\sum_{i=1}^n X_i^2 -\sum_{i=1}^n (D_{X_i}X_i)_{\mathcal{H}}.
 \]
 Since the vectors  $(D_{X_i}X_i)_{\mathcal{H}}$ are basic,  from the previous result $\Delta_{\mathcal{H}}$ commutes with $\Delta_{\mathcal{V}}$.
  \end{proof}
 
\subsection{Riemannian foliations}

In many interesting cases, we do not actually have a globally defined Riemannian sumersion but a Riemannian foliation.

\begin{definition}
Let $\M$ be a smooth and connected $n+m$ dimensional manifold. A $m$-dimensional foliation $\mathcal{F}$ on $\M$ is defined by a maximal collection of pairs $\{ (U_\alpha, \pi_\alpha), \alpha \in I \}$ of open subsets $U_\alpha$ of $\M$ and submersions $\pi_\alpha: U_\alpha \to U_\alpha^0$ onto open subsets of $\mathbb{R}^n$ satisfying:
\begin{itemize}
\item $\cup_{\alpha \in I} U_\alpha =\M$;
\item If $U_\alpha \cap U_\beta \neq \emptyset$, there exists a local diffeomorphism $\Psi_{\alpha \beta}$ of $\mathbb{R}^n$ such that $\pi_\alpha=\Psi_{\alpha \beta} \pi_\beta$ on $U_\alpha \cap U_\beta $.
\end{itemize}
\end{definition}

The maps $\pi_\alpha$ are called disintegrating maps of $\mathcal{F}$. The connected components of the sets $\pi_\alpha^{-1}(c)$, $c \in \mathbb{R}^n$, are called the plaques of the foliation. A foliation arises from an integrable sub-bundle of $T\M$, to be denoted by $\mathcal{V}$ and referred to as the vertical distribution. These are the vectors tangent to the leaves, the maximal integral sub-manifolds of $\mathcal{V}$.  

\

Foliations have been extensively studied and numerous books are devoted to them. We refer in particular to the book by Tondeur \cite{Tondeur}.

\

In the sequel, we shall only  be interested in Riemannian foliations with bundle like metric. 

\begin{definition}
Let $\M$ be a smooth and connected $n+m$ dimensional Riemannian manifold. A $m$-dimensional foliation $\mathcal{F}$ on $\M$ is said to be Riemannian with a bundle like metric if the disintegrating maps $\pi_\alpha$ are Riemannian submersions onto $U_\alpha^0$ with its given Riemannian structure. If moreover the leaves are totally geodesic sub-manifolds of $\M$, then we say that the Riemannian foliation is totally geodesic with a bundle like metric.
\end{definition}

Observe that if we have a Riemannian submersion $\pi : (\M,g) \to (\mathbb{B},j)$, then $\M$ is equipped with a Riemannian foliation with bundle like metric whose leaves are the fibers of the submersion. Of course, there are many Riemannian foliations with bundle like metric that do not come from a Riemannian submersion.

\begin{Example}(\textbf{Contact manifolds})
Let $(\M,\theta)$ be a $2n+1$-dimensional smooth contact manifold. On $\M$ there is a unique smooth vector field $T$, the so-called Reeb vector field, that satisfies
\[
\theta(T)=1,\quad \mathcal{L}_T(\theta)=0,
\]
where $\mathcal{L}_T$ denotes the Lie derivative with respect to  $T$. On $\M$ there is a foliation, the Reeb foliation, whose leaves are the orbits of the vector field $T$.  As it is well-known (see for instance \cite{Tanno}), it is always possible to find a Riemannian metric $g$ and a $(1,1)$-tensor field $J$ on $\M$ so that for every  vector fields $X, Y$
\[
g(X,T)=\theta(X),\quad J^2(X)=-X+\theta (X) T, \quad g(X,JY)=(d\theta)(X,Y).
\]
The triple $(\M, \theta,g)$ is called a contact Riemannian manifold. We see then that the Reeb foliation is totally geodesic with bundle like metric if and only if the Reeb vector field $T$ is a Killing field, that is,
\[
\mathcal{L}_T g=0.
\]
In that case $(\M, \theta,g)$ is called a K-contact Riemannian manifold.
\end{Example}

\begin{Example}(\textbf{Sub-Riemannian manifolds with transverse symmetries})
The concept of sub-Riemannian manifold with transverse symmetries was introduced in \cite{BG}. Let $\M$ be a smooth, connected  manifold with dimension $n+m$. We assume that $\bM$ is equipped with a bracket generating distribution $\mathcal{H}$ of dimension $n$ and a fiberwise inner product $g_\mathcal{H}$ on that distribution. It is said that $\M$ is a sub-Riemannian manifold with transverse symmetries if there exists a $m$- dimensional Lie algebra $\mathcal{V}$ of sub-Riemannian Killing vector fields such that for every $x \in \bM$, 
 \[
 T_x \bM= \mathcal{H}(x) \oplus \mathcal{V}(x),
 \]
 where 
 \[
  \mathcal{V}(x)=\{ Z(x), Z \in \mathcal{V}(x) \}.
 \]
 The choice of an inner product $g_{\mathcal{V}}$ on the Lie algebra $\mathcal{V}$ naturally endows $\bM$ with a  Riemannian metric that makes the decomposition $T_x\M=\mathcal{H}(x) \oplus \mathcal{V}(x)$ orthogonal:
\[
g=g_\mathcal{H} \oplus  g_{\mathcal{V}}.
\]
The sub-bundle of $\M$ determined by vector fields in $\mathcal{V}$ gives a foliation on $\M$ which is easily seen to be totally geodesic with bundle like metric.
\end{Example}

Since Riemannian foliations with a bundle like metric can locally be desribed by a Riemannian submersion, we can define a horizontal Laplacian $\Dh$ and a vertical Laplacian $\Dv$. Observe that they commute on smooth functions if the foliation is totally geodesic. More generally all the local properties of a Riemannian submersion extend to Riemannian foliations.

\section{Horizontal Laplacians and heat kernels on model spaces}

We  discuss concrete examples of Riemannian foliations with totally geodesic leaves and bundle like metric. We focus in particular on the study of the horizontal Laplacians and of the corresponding heat kernels for which we show that explicit expressions can be given. The examples we cover are the Heisenberg group and the Hopf fibrations on the sphere. They can respectively be seen as the models of \textit{flat} and \textit{positively curved} sub-Riemannian spaces. The \textit{negatively curved} sub-Riemannian spaces  come from totally geodesic pseudo-Riemannian foliations on the anti-de Sitter space and for more detais we refer to the thesis of Michel Bonnefont \cite{Bonnefont1} and Jing Wang \cite{Wang1} and their papers \cite{Bonnefont2} and \cite{Wang2}. Besides  the  Hopf fibrations, there are of course many other situations where sub-Riemannian heat kernels may computed more or less explicitely. We mention in particular the reference  \cite{ABGR} which deals with the case of unimodular Lie groups.

\subsection{Heisenberg group}

One of the simplest non trivial Riemannian submersions with totally geodesic fibers and bracket generating horizontal distribution is associated to the Heisenberg group.  The Heisenberg group is the set
\[
\mathbb{H}^{2n+1}=\left\{ (x,y,z), x \in \mathbb{R}^n,  y \in \mathbb{R}^n, z\in\mathbb{R} \right\}
\]
endowed with the group law
\[
(x_1,y_1,z_1) \star (x_2,y_2,z_2)=(x_1+x_2,y_1+y_2,z_1+z_2+\langle x_1,y_2 \rangle_{\R^n} -\langle x_2,y_1 \rangle_{\R^n}).
\]
The vector fields
\[
X_i=\frac{\partial}{\partial x_i} -y_i \frac{\partial}{\partial z} 
\]
\[
Y_i=\frac{\partial}{\partial y_i} +x_i \frac{\partial}{\partial z} 
\]
and
\[
Z=\frac{\partial}{\partial z}
\]
form an orthonormal frame of left invariant vector fields for the left invariant metric on $\mathbb{H}^{2n+1}$. Note that the following commutations hold
\[
[X_i,Y_j]=2\delta_{ij} Z, \quad [X_i,Z]=[Y_i,Z]=0.
\]
The map
\begin{align*}
\pi :
\begin{array}{lll}
  \mathbb{H}^{2n+1} &\to& \mathbb{R}^{2n} \\
 (x,y,z) & \to & (x,y)
 \end{array}
\end{align*}
is then a Riemannian submersion with totally geodesic fibers. The horizontal Laplacian is  the left invariant operator
\begin{align*}
\Delta_{\mathcal{H}} &=\sum_{i=1}^n (X_i^2+Y_i^2) \\
 &=\sum_{i=1}^n \frac{\partial^2}{\partial x^2_i} +\frac{\partial^2}{\partial y^2_i} + 2\sum_{i=1}^n \left( x_i \frac{\partial}{\partial y_i}-y_i \frac{\partial}{\partial x_i}\right) \frac{\partial}{\partial z}+ (\| x\|^2+\| y \|^2) \frac{\partial^2}{\partial z^2} 
\end{align*}
and the vertical Laplacian is the left invariant operator
\[
\Delta_\mathcal{V}=\frac{\partial^2}{\partial z^2} .
\]
The horizontal distribution
\[
\mathcal{H}=\mathbf{span} \{X_1, \cdots,X_n,Y_1,\cdots, Y_n\}
\]
is bracket generating at every point, so $\Delta_{\mathcal{H}} $ is a subelliptic operator. The operator $\Delta_{\mathcal{H}} $ is invariant by the action of the  orthogonal group  of $\mathbb{R}^{2n}$ on the variables $(x,y)$. Introducing the variable $r^2=\| x\|^2 +\|y\|^2$, we see then that the radial part of $\Delta_{\mathcal{H}} $ is given by
\[
\rL=\frac{\partial^2}{\partial r^2}+\frac{2n-1}{r} \frac{\partial}{\partial r}  +r^2 \frac{\partial^2}{\partial z^2}.
\]
This means that if $f: \mathbb{R}_{\ge 0} \times \R \to \mathbb R$ is a smooth map and $\rho$ is the submersion $(x,y,z)\to (\sqrt{\| x\|^2+\|y\|^2},z)$ then
\[
\Delta_{\mathcal{H}} (f \circ \rho)=(\rL f) \circ \rho.
\]
From this invariance property in order to study the heat kernel and fundamental solution of $\Dh$ at $0$ it suffices to study the heat kernel and the fundamental solution of $\rL$ at $0$.

\

We denote by $\rp_t(r,z)$ the heat kernel at 0 of $\rL$. It was first computed explicitly by Gaveau \cite{Gaveau},  building on previous works by Paul L\'evy. 

\begin{proposition}
For $r \ge 0$ and $z \in \mathbb{R}$,
\[
\rp_t (r,z)=\frac{1}{(2\pi)^{n+1}} \int_\R e^{i \lambda z} \left( \frac{\lambda}{\sinh (2\lambda t)} \right)^n e^{-\frac{\lambda r^2}{ 2} \coth (2\lambda t) } d\lambda
\]
\end{proposition}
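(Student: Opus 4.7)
The approach is to diagonalize $\rL$ in the $z$-variable by a partial Fourier transform, recognize the resulting $\lambda$-parametrized family of operators as radial harmonic oscillators on $\R^{2n}$, apply Mehler's formula, and then invert. Since the coefficients of $\rL$ are independent of $z$, I would set
\[
\hat{\rp}_t(r,\lambda)=\int_\R e^{-i\lambda z}\rp_t(r,z)\,dz,
\]
so that the equation $\partial_t \rp_t=\rL\rp_t$ transforms into
\[
\partial_t \hat{\rp}_t=L_\lambda\hat{\rp}_t,\qquad L_\lambda:=\partial_r^2+\frac{2n-1}{r}\partial_r-\lambda^2 r^2.
\]

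Next, I would identify $L_\lambda$ with the radial part of the $2n$-dimensional harmonic oscillator $\Delta_{\R^{2n}}-\lambda^2\|w\|^2$. Indeed, writing $\rp_t(r,z)$ as the heat kernel $p_t((x,y),z)$ of $\Dh$ at the origin via $r=\sqrt{\|x\|^2+\|y\|^2}$ and taking the Fourier transform in $z$ of the explicit expression for $\Dh$ recalled above, the cross-term $2\sum_i(x_i\partial_{y_i}-y_i\partial_{x_i})\partial_z$ becomes a multiple of the angular momentum, which annihilates radial functions of $(x,y)$. Thus on radial data the transformed operator reduces precisely to $\Delta_{\R^{2n}}-\lambda^2\|w\|^2$.

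The core analytic input is Mehler's formula: the heat kernel at the origin of $\Delta_{\R^{2n}}-\lambda^2\|w\|^2$ with respect to Lebesgue measure equals
\[
K^\lambda_t(w,0)=\left(\frac{\lambda}{2\pi\sinh(2\lambda t)}\right)^n\exp\left(-\frac{\lambda\|w\|^2}{2}\coth(2\lambda t)\right).
\]
Since the initial datum for $\hat{\rp}_t$ is the Dirac mass at the origin of $\R^{2n}$ (the $z$-Fourier transform of $\delta_0\in\R^{2n+1}$), this yields
\[
\hat{\rp}_t(r,\lambda)=\left(\frac{\lambda}{2\pi\sinh(2\lambda t)}\right)^n\exp\left(-\frac{\lambda r^2}{2}\coth(2\lambda t)\right),
\]
and Fourier inversion $\rp_t(r,z)=\frac{1}{2\pi}\int_\R e^{i\lambda z}\hat{\rp}_t(r,\lambda)\,d\lambda$ gives the claimed formula after collecting the factor $(2\pi)^{n+1}$. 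I would note that $\lambda/\sinh(2\lambda t)$ and $\lambda\coth(2\lambda t)$ are even and extend analytically across $\lambda=0$ with common value $1/(2t)$, so the integrand is smooth and integrable, and in the formal limit $\lambda\to 0$ one recovers the $2n$-dimensional Gaussian.

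The principal obstacle is Mehler's formula itself. I would prove it either via the generating identity for Hermite functions (diagonalizing the oscillator in its eigenbasis) or by direct verification: checking that the ansatz on the right-hand side solves the heat equation $\partial_t K=(\Delta_{\R^{2n}}-\lambda^2\|w\|^2)K$ reduces to a short algebraic manipulation using $\coth'=1-\coth^2$, and checking the initial condition $K^\lambda_t(\cdot,0)\to\delta_0$ as $t\downarrow 0$ is a standard Gaussian approximation, since $\coth(2\lambda t)\sim 1/(2\lambda t)$ and the prefactor $(\lambda/(2\pi\sinh(2\lambda t)))^n\sim(4\pi t)^{-n}$ in that regime.
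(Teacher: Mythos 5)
Your proposal is correct and follows essentially the same route as the paper: a partial Fourier transform in $z$ reduces the problem to the heat kernel at the origin of the radial harmonic oscillator $\partial_r^2+\tfrac{2n-1}{r}\partial_r-\lambda^2 r^2$, which is then written down explicitly and Fourier-inverted. The only (cosmetic) difference is that you invoke Mehler's formula for $\Delta_{\mathbb{R}^{2n}}-\lambda^2\|w\|^2$ directly, whereas the paper derives the same kernel by conjugating with $e^{\lambda r^2/2}$ to reduce to an Ornstein--Uhlenbeck operator with Gaussian kernel --- two equivalent ways of obtaining the same Mehler kernel.
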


\begin{proof}
Since $ \frac{\partial}{\partial z} $ commutes with $\rL$, the idea is to use a Fourier transform in $z$. We see then that
\[
\rp_t (r,z)=\frac{1}{2\pi} \int_\R e^{i \lambda z} \Phi_t (r,\lambda) d\lambda,
\]
where $ \Phi_t (r,z,\lambda)$ is the fundamental solution at 0 of the parabolic partial differential equation
\[
\frac{\partial \Phi}{ \partial t}=\frac{\partial^2 \Phi }{\partial r^2}+\frac{2n-1}{r} \frac{\partial \Phi }{\partial r}  -\lambda^2 r^2 \Phi .
\]
We thus want to compute the semigroup generated by the Sch\"rodinger operator
\[
\mathcal{L}_\lambda=\frac{\partial^2  }{\partial r^2}+\frac{2n-1}{r} \frac{\partial  }{\partial r}  -\lambda^2 r^2.
\]
The trick is now to observe that for every $f$,
\[
\mathcal{L}_\lambda \left( e^{\frac{\lambda r^2}{2}} f \right)= e^{\frac{\lambda r^2}{2}} \left( 2n\lambda +\mathcal{G}_\lambda \right)f,
\]
where
\[
\mathcal{G}_\lambda=\frac{\partial^2   }{\partial r^2}+\left( 2 \lambda r+\frac{2n-1}{r} \right)\frac{\partial   }{\partial r}.
\]
The operator $\mathcal{G}_\lambda$ turns out to be the radial part of the Ornstein-Uhlenbeck operator $\Delta_{\mathbb{R}^{2n}} +2 \lambda \langle x , \nabla_{\mathbb{R}^{2n}} \rangle$ whose heat kernel at 0 is a  Gaussian density with mean 0 and variance $\frac{1}{2\lambda}(e^{4\lambda t}-1)$. This means that the heat kernel at 0 of $\mathcal{G}_\lambda$ is given by
\[
q_t (r)=\frac{1}{(2\pi)^{n}} \left( \frac{2\lambda}{e^{4\lambda t}-1} \right)^n e^{-\frac{\lambda r^2}{ e^{4\lambda t}-1}}.
\]
We conclude
\[
 \Phi_t (r,z,\lambda)=\frac{e^{2n\lambda t}}{(2\pi)^n} \left( \frac{2\lambda}{e^{4\lambda t}-1} \right)^n e^{-\frac{\lambda r^2}{2}}  e^{-\frac{\lambda r^2}{ e^{4\lambda t}-1}}
 \]
\end{proof}
As a straightforward  corollary, we deduce the heat kernel at 0 of $\Dh$.

\begin{corollary}
The heat kernel at 0 of $\Dh$ is 
\[
p_t(x,y,z)=\frac{1}{(2\pi)^{n+1}} \int_\R e^{i \lambda z} \left( \frac{\lambda}{\sinh (2\lambda t)} \right)^n e^{-\frac{\lambda (\| x\|^2+\|y\|^2)}{ 2} \coth (2\lambda t) } d\lambda
\]
\end{corollary}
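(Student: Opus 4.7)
The corollary is a direct consequence of the preceding proposition together with the rotational symmetry of $\Dh$ in the $(x,y)$-variables. My plan is to invoke uniqueness of the heat kernel to show that $p_t(x,y,z)=\rp_t(r,z)$ with $r=\sqrt{\|x\|^2+\|y\|^2}$, after which the claimed integral expression falls out by substituting $r^2=\|x\|^2+\|y\|^2$ in the formula for $\rp_t$ provided by the proposition.

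First I verify that $\Dh$ is invariant under the natural action of the unitary group $U(n)$ on $(x,y)\in\mathbb{R}^{2n}\simeq\mathbb{C}^n$. The Euclidean Laplacian part and the coefficient $\|x\|^2+\|y\|^2$ in front of $\partial^2/\partial z^2$ are manifestly $O(2n)$-invariant; the cross term $2\sum_i(x_i\partial_{y_i}-y_i\partial_{x_i})\partial_z$ is built from the symplectic form $\sum_i dx_i\wedge dy_i$ and is invariant precisely under $U(n)\subset O(2n)$. Since $U(n)$ acts transitively on the spheres $\{\|x\|^2+\|y\|^2=r^2\}$ and fixes the origin, the heat kernel $p_t$ of $\Dh$ at $0$ must satisfy $p_t(x,y,z)=F_t(r,z)$ for some function $F_t$ on $\mathbb{R}_{\ge 0}\times\mathbb{R}$.

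Next I feed this ansatz into the heat equation. The intertwining relation $\Dh(f\circ\rho)=(\rL f)\circ\rho$ recorded just before the proposition converts $\partial_t p_t=\Dh p_t$ into $\partial_t F_t=\rL F_t$. Uniqueness of the fundamental solution for $\rL$ then forces $F_t=\rp_t$, \emph{provided} the initial conditions match through $\rho$: for a smooth radial test function $g\circ\rho$ one must check that $\int_{\mathbb H^{2n+1}}(g\circ\rho)\,p_t\,dx\,dy\,dz\to g(0,0)$ as $t\to 0^+$, which upon passing to polar coordinates on $\mathbb{R}^{2n}$ and absorbing the Jacobian $\omega_{2n-1}r^{2n-1}$ into the measure on the radial quotient is exactly the normalization of $\rp_t$ used in the proposition.

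The only genuine---though entirely routine---obstacle is this measure-theoretic bookkeeping to align the two heat-kernel normalizations across the submersion $\rho$. Once that is settled, substitution gives $p_t(x,y,z)=\rp_t(\sqrt{\|x\|^2+\|y\|^2},z)$, and plugging $r^2=\|x\|^2+\|y\|^2$ into the integral representation of $\rp_t$ produces precisely the stated formula.
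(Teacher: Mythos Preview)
Your argument is correct and is essentially the detailed version of what the paper treats as an immediate consequence: the paper has already recorded, just before the proposition, that the rotational invariance of $\Dh$ forces the heat kernel at $0$ to factor through $\rho$, and then simply substitutes $r^2=\|x\|^2+\|y\|^2$ into $\rp_t$. Your observation that the cross term is only $U(n)$-invariant rather than $O(2n)$-invariant is a genuine refinement of the paper's slightly loose claim, but since $U(n)$ already acts transitively on spheres in $\mathbb{R}^{2n}$ it changes nothing in the argument.
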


Though it does not seem very explicit, this representation of the heat kernel has many applications and can be used to get very sharp estimates and small-time asymptotics (see \cite{BGG} and \cite{Li1,Li2}).

\subsection{The Hopf fibration}

The second simplest and geometrically relevant example is given by the celebrated Hopf fibration. The horizontal heat kernel was first computed in \cite{BW1} that we follow but simplify since, here, the CR structure of the sphere is not relevant for us.

\

Let us consider the odd dimensional unit sphere 
\[
\bS^{2n+1}=\lbrace z=(z_1,\cdots,z_{n+1})\in \mathbb{C}^{n+1}, \| z \| =1\rbrace.
\]
There is an isometric group action of $\mathbb{S}^1=\mathbf{U}(1)$ on $\bS^{2n+1}$ which is  defined by $$(z_1,\cdots, z_n) \rightarrow (e^{i\theta} z_1,\cdots, e^{i\theta} z_n). $$ The generator of this action shall be denoted by $T$. We thus have for every $f \in C^\infty(\bS^{2n+1})$
\[
Tf(z)=\frac{d}{d\theta}f(e^{i\theta}z)\mid_{\theta=0},
\]
so that
\[
T=i\sum_{j=1}^{n+1}\left(z_j\frac{\partial}{\partial z_j}-\overline{z_j}\frac{\partial}{\partial \overline{z_j}}\right).
\]
The quotient space $\bS^{2n+1} / \mathbf{U}(1)$ is the projective complex space $\mathbb{CP}^n$ and the projection map $\pi :  \bS^{2n+1} \to \mathbb{CP}^n$ is a Riemannian submersion with totally geodesic fibers isometric to $\mathbf{U}(1)$. The fibration
\[
\mathbf{U}(1) \to \bS^{2n+1} \to \mathbb{CP}^n
\]
 is called the Hopf fibration.

\

To study the geometry of the Hopf fibration, in particular the horizontal Laplacian  $\Dh$, it is convenient  to introduce a set of coordinates that reflects the action of the isometry group of $\mathbb{CP}^n$ on $\mathbb{S}^{2n+1}$.
Let $(w_1,\cdots, w_n,\theta)$ be  the local inhomogeneous coordinates for $\mathbb{CP}^n$ given by $w_j=z_j/z_{n+1}$, and $\theta$ be the local fiber coordinate. i.e., $(w_1, \cdots, w_n)$ parametrizes the complex lines passing through the north pole\footnote{We will call north pole the point with complex coordinates $z_1=0,\cdots, z_{n+1}=1$. }, while $\theta$ determines a point on the line that is of unit distance from the north pole. More explicitly, these coordinates are given by the map
\begin{align}\label{cylinder}
(w,\theta)\longrightarrow \left(w e^{i\theta}\cos r ,e^{i\theta}\cos r \right),
\end{align}
where $r=\arctan \sqrt{\sum_{j=1}^{n}|w_j|^2} \in [0,\pi /2)$, $\theta \in \R/2\pi\mathbb{Z}$, and $w \in \mathbb{CP}^n$. In these coordinates, it is clear that $T=\frac{\partial}{\partial \theta}$ and that the vertical Laplacian is
\[
\Dv=\frac{\partial^2 }{\partial \theta^2}.
\]
Our goal is now to compute the  horizontal Laplacian $\Dh$.  This operator is invariant  by the action on the variables $(w_1,\cdots,w_n)$ of the group of isometries of $\mathbb{CP}^n$ that fix the north pole of $\mathbb{S}^{2n+1}$ (this group is $\mathbf{SU}(n)$). Therefore the heat kernel at the north pole only depends on the variables $(r,\theta)$ and can be computed through the heat of kernel of the radial part $\rL$ of $\Dh$.

\begin{proposition}
Consider the submersion
\begin{align*}
\rho:
\begin{array}{lll}
\mathbb{S}^{2n+1} & \to & [0,\pi/2) \times  \R/2\pi\mathbb{Z} \\
(w,\theta)  & \to & (r,\theta)
\end{array}
\end{align*}
where we recall that $r=\arctan \sqrt{\sum_{j=1}^{n}| w_j|^2}$.  Then for every smooth map $f: [0,\pi/2) \times  \R/2\pi\mathbb{Z} \to \R$,
\[
\Delta_{\mathcal{H}} (f \circ \rho)=(\rL f) \circ \rho,
\]
where
\begin{align*}
\rL=\frac{\partial^2}{\partial r^2}+((2n-1)\cot r-\tan r)\frac{\partial}{\partial r}+\tan^2r\frac{\partial^2}{\partial \theta^2}.
\end{align*}
\end{proposition}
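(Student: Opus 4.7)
The plan is to compute $\Delta_{\mathcal{H}} f$ for $f = f(r,\theta)$ by using the decomposition $\Delta_{\mathcal{H}} = \Delta - \Delta_{\mathcal{V}}$ and applying the Laplace--Beltrami formula on $\bS^{2n+1}$ in coordinates adapted to the Hopf fibration. First I parametrize $\bS^{2n+1}$ away from $\{z_{n+1}=0\}$ by $(r,w',\theta)$ with $r \in [0,\pi/2)$, $w' \in \bS^{2n-1} \subset \mathbb{C}^n$ and $\theta \in \R/2\pi\mathbb{Z}$ via
\[
z_j = \sin r \cdot w'_j \, e^{i\theta}, \quad j \le n, \qquad z_{n+1} = \cos r \cdot e^{i\theta}.
\]
Differentiating this parametrization shows that $\partial/\partial r$ is a unit horizontal vector, $\partial/\partial \theta = T$ is a unit vertical vector, they are orthogonal, and $\rho$ reads $(r,w',\theta) \mapsto (r,\theta)$.

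To assemble the full metric I use the local Hopf fibration $\bS^{2n-1} \to \mathbb{CP}^{n-1}$, writing $w'$ in terms of $(y,\phi)$ with $y \in \mathbb{CP}^{n-1}$ and round metric $g_{\mathbb{CP}^{n-1}} + (d\phi + \beta)^2$ for a connection 1-form $\beta$ on $\mathbb{CP}^{n-1}$. A direct computation of $\sum_j dz_j\, d\bar z_j$ yields
\[
ds^2 = dr^2 + \sin^2 r \, g_{\mathbb{CP}^{n-1}} + \sin^2 r \cos^2 r \, \tilde\eta^2 + (d\theta + \sin^2 r \, \tilde\eta)^2, \qquad \tilde\eta = d\phi + \beta,
\]
which is the standard Hopf form of the round sphere metric. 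From this I read off $g^{rr} = 1$, $g^{r\theta} = 0$, $g^{\theta\theta} = \sec^2 r$, $g^{y^i \theta} = 0$, and $\sqrt{|g|} = \sin^{2n-1} r \cos r \cdot \sqrt{\det g_{\mathbb{CP}^{n-1}}}$, so that the only $r$-dependence of the volume factor sits in its prefactor, and the $y$- and $\phi$-divergence pieces of the Laplace--Beltrami formula drop out when acting on $f(r,\theta)$.

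Applying Laplace--Beltrami then gives
\[
\Delta f = \frac{1}{\sqrt{|g|}} \partial_r \bigl(\sqrt{|g|}\, \partial_r f\bigr) + \sec^2 r \, \partial_\theta^2 f = \partial_r^2 f + \bigl((2n-1)\cot r - \tan r\bigr)\partial_r f + \sec^2 r \, \partial_\theta^2 f,
\]
using $\tfrac{d}{dr}\log(\sin^{2n-1} r \cos r) = (2n-1)\cot r - \tan r$. Since $\Delta_{\mathcal{V}} f = \partial_\theta^2 f$ and $\sec^2 r - 1 = \tan^2 r$, the decomposition $\Delta_{\mathcal{H}} = \Delta - \Delta_{\mathcal{V}}$ recovers exactly the claimed expression for $\overline{\Delta}_{\mathcal{H}}$.

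The main obstacle is the second step: correctly handling the Hopf twist to arrive at the form $(d\theta + \sin^2 r \, \tilde\eta)^2$, including the mixed $\phi$--$\theta$ term. What makes the remainder of the argument painless is that, despite the twist, the entries of $g^{-1}$ and the volume factor that actually enter $\Delta f$ when $f = f(r,\theta)$ take a very simple closed form and the $y$-dependence factorizes cleanly out of the logarithmic derivative. An alternative route would be to expand $\Delta_{\mathcal{H}} = \sum_i X_i^2 - \sum_i (D_{X_i} X_i)_{\mathcal{H}}$ from Section~2 in a basic orthonormal frame, but the bookkeeping of the Levi-Civita terms is heavier than the direct metric calculation above.
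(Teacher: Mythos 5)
Your argument is correct, and it reduces the problem exactly as the paper does, via $\Dh=\Delta-\Dv$ with $\Dv=\partial^2/\partial\theta^2$; the difference lies entirely in how the radial part of the Laplace--Beltrami operator is obtained. You write the round metric explicitly in coordinates adapted to the fibration, $ds^2=dr^2+\sin^2 r\, g_{\mathbb{CP}^{n-1}}+\sin^2 r\cos^2 r\,\tilde\eta^2+(d\theta+\sin^2 r\,\tilde\eta)^2$, and then read off $g^{rr}=1$, $g^{r\theta}=0$, $g^{\theta\theta}=\sec^2 r$ and $\sqrt{|g|}=\sin^{2n-1}r\cos r\cdot\sqrt{\det g_{\mathbb{CP}^{n-1}}}$ (all of which I have checked; note that $g^{\phi\theta}\neq 0$, but that term is harmless since nothing in sight depends on $\phi$, which is what your remark about the $\phi$-divergence piece amounts to). The paper instead invokes the classical formula for $\Delta$ acting on functions of the Riemannian distances $\delta_1,\delta_2$ to two reference points, using $\cos\delta_1=\cos r\cos\theta$ and $\cos\delta_2=\cos r\sin\theta$, and changes variables. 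Your route is more self-contained and makes the geometric structure of the metric (in particular the Hopf twist $d\theta+\sin^2 r\,\tilde\eta$ and the orthonormal coframe) completely explicit, at the cost of a longer metric computation; the paper's route is shorter but leans on an unproved formula for $\Delta$ in the variables $(\delta_1,\delta_2)$ and a change of variables it does not display. Both yield the same radial operator $\partial_r^2+((2n-1)\cot r-\tan r)\partial_r+\sec^2 r\,\partial_\theta^2$ before subtracting $\partial_\theta^2$.
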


\begin{proof}
The easiest route is to compute first the radial part of the Laplace-Beltrami operator $\Delta$ and then to use the formula
\[
\Dh=\Delta-\Dv=\Delta-\frac{\partial}{\partial \theta^2}.
\]
In our parametrization of $\mathbb{S}^{2n+1}$ we have,
\[
z_{n+1}=e^{i\theta}\cos r. 
\]
Therefore if $\delta_1$ denotes the Riemannian distance based at the north pole, we have $\cos \delta_1 =\cos r \cos \theta$ and if $\delta_2$ denotes the Riemannian distance based at the point with real coordinates $(0,\cdots,0,1)$ then we have $\cos \delta_2=\cos r  \sin \theta$. The formula for the Laplace-Beltrami operator acting on functions depending on the Riemannian distance based at a point is well-known and we deduce from it that $\Delta$ acts on functions depending only on $\delta_1,\delta_2$ as
\[
\frac{\partial^2}{\partial \delta_1^2} + 2n \cot \delta_1 \frac{\partial }{\partial \delta_1} +\frac{\partial^2}{\partial \delta_2^2} + 2n \cot \delta_2  \frac{\partial }{\partial \delta_2}
\]
In the variables $(r,\theta)$ this last operator writes
\[
\frac{\partial^2}{\partial r^2}+((2n-1)\cot r-\tan r)\frac{\partial}{\partial r}+\frac{1}{\cos^2 r}\frac{\partial^2}{\partial \theta^2}.
\]
Thus, we conclude
\[
\rL=\frac{\partial^2}{\partial r^2}+((2n-1)\cot r-\tan r)\frac{\partial}{\partial r}+\frac{1}{\cos^2 r}\frac{\partial^2}{\partial \theta^2} -\frac{\partial}{\partial \theta^2}
\]
\end{proof}

We can observe that  $\rL$ is symmetric with respect to the measure
\[
d\overline{\mu}=\frac{2\pi^n}{\Gamma(n)}(\sin r)^{2n-1}\cos r drd\theta,
\]
where the normalization is chosen in such a way that
\[
\int_{-\pi}^{\pi}\int_0^{\frac{\pi}{2}}d\overline{\mu}=\mu(\bS^{2n+1})=\frac{2\pi^{n+1}}{\Gamma (n+1)}.
\]

As mentioned above, the heat kernel at the north pole of $\Dh$ only depends on $(r, \theta)$, that is $p\left(w e^{i\theta}\cos r ,e^{i\theta}\cos r \right)=\rp_t(r, \theta)$, where $\rp_t$ is the heat kernel at 0 of $\rL$. 

\begin{prop}\label{heatHopf}
For $t>0$, $r\in[0,\frac{\pi}{2})$, $ \theta\in[-\pi,\pi]$:
\[
\rp_t(r, \theta)=\frac{\Gamma(n)}{2\pi^{n+1}}\sum_{k=-\infty}^{+\infty}\sum_{m=0}^{+\infty} (2m+|k|+n){m+|k|+n-1\choose n-1}e^{-\lambda_{m,k}t+ik \theta}(\cos r)^{|k|}P_m^{n-1,|k|}(\cos 2r),
\]
where $\lambda_{m,k}=4m(m+|k|+n)+2|k|n$ and
\[
P_m^{n-1,|k|}(x)=\frac{(-1)^m}{2^m m!(1-x)^{n-1}(1+x)^{|k|}}\frac{d^m}{dx^m}((1-x)^{n-1+m}(1+x)^{|k|+m})
\]
is a Jacobi polynomial.
\end{prop}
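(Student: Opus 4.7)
The plan is to use a spectral decomposition of $\rL$. Since $\partial/\partial\theta$ commutes with $\rL$, I first separate variables by looking for eigenfunctions of the form $\phi_k(r,\theta)=e^{ik\theta}g(r)$, $k\in\mathbb Z$. The action of $\rL$ on such a function reduces to the one-dimensional operator
\[
\mathcal{L}_k g = g''(r)+\bigl((2n-1)\cot r-\tan r\bigr)g'(r)-k^{2}\tan^{2}r\, g(r).
\]
To unmask the Jacobi structure I would make the ansatz $g(r)=(\cos r)^{|k|}h(r)$; a direct computation shows that $\mathcal{L}_k$ conjugated by $(\cos r)^{|k|}$ takes the form
\[
h''(r)+\bigl((2n-1)\cot r-(2|k|+1)\tan r\bigr)h'(r)-2|k|n\, h(r).
\]
Then the change of variable $x=\cos 2r$ (so $\sin^{2}r=(1-x)/2$, $\cos^{2}r=(1+x)/2$) converts this operator, after multiplication by a constant, into the classical Jacobi operator
\[
(1-x^{2})\frac{d^{2}h}{dx^{2}}+\bigl(|k|-(n-1)-(|k|+n+1)x\bigr)\frac{dh}{dx},
\]
whose polynomial eigenfunctions are the Jacobi polynomials $h(x)=P_m^{n-1,|k|}(x)$, $m\ge 0$. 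Tracking constants through the two substitutions produces the eigenvalue $\lambda_{m,k}=4m(m+|k|+n)+2|k|n$.

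Second, I would verify that the family
\[
\phi_{m,k}(r,\theta)=e^{ik\theta}(\cos r)^{|k|}P_m^{n-1,|k|}(\cos 2r),\qquad m\ge 0,\ k\in\mathbb Z,
\]
is a complete orthogonal system in $L^{2}(d\overline{\mu})$, where $d\overline{\mu}=\frac{2\pi^{n}}{\Gamma(n)}(\sin r)^{2n-1}\cos r\,dr\,d\theta$ is the pushforward of the Riemannian measure under $\rho$. Orthogonality in $\theta$ is immediate; orthogonality in $r$ follows because under $x=\cos 2r$ the radial measure $(\cos r)^{2|k|}(\sin r)^{2n-1}\cos r\,dr$ becomes (up to a constant) the Jacobi weight $(1-x)^{n-1}(1+x)^{|k|}\,dx$, with respect to which the $P_m^{n-1,|k|}$ are orthogonal. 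Completeness follows from the Stone--Weierstrass theorem (functions of $(r,\theta)$ that are invariant under the isotropy group of the north pole) combined with the completeness of Jacobi polynomials on $[-1,1]$.

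Third, I would assemble the spectral expansion of the heat kernel at the north pole,
\[
\rp_t(r,\theta)=\sum_{m,k}\frac{\phi_{m,k}(r,\theta)\,\overline{\phi_{m,k}(0,0)}}{\|\phi_{m,k}\|_{L^{2}(d\overline{\mu})}^{2}}\,e^{-\lambda_{m,k}t}.
\]
Here $\phi_{m,k}(0,0)=P_m^{n-1,|k|}(1)=\binom{m+n-1}{m}$ by the standard value of Jacobi polynomials at $1$, and the norm $\|\phi_{m,k}\|^{2}$ is computed from the classical Jacobi orthogonality identity
\[
\int_{-1}^{1}(P_m^{\alpha,\beta}(x))^{2}(1-x)^{\alpha}(1+x)^{\beta}dx=\frac{2^{\alpha+\beta+1}}{2m+\alpha+\beta+1}\,\frac{\Gamma(m+\alpha+1)\Gamma(m+\beta+1)}{m!\,\Gamma(m+\alpha+\beta+1)}
\]
applied with $\alpha=n-1$, $\beta=|k|$, together with the $\theta$-integral $\int_{-\pi}^{\pi}d\theta=2\pi$ and the normalizing constant of $d\overline{\mu}$.

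The main obstacle is purely combinatorial: one must carefully match the three sources of constants (the Jacobi $L^{2}$-norm, the value $P_m^{n-1,|k|}(1)=\binom{m+n-1}{m}$, and the factor $\frac{\Gamma(n)}{2\pi^{n}}$ from the reference measure on $[0,\pi/2)\times\mathbb{R}/2\pi\mathbb Z$) so that after simplification the prefactor collapses to $\frac{\Gamma(n)}{2\pi^{n+1}}(2m+|k|+n)\binom{m+|k|+n-1}{n-1}$. Once this bookkeeping is done the claimed closed-form series drops out of the spectral expansion.
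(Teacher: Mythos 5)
Your argument is correct and follows essentially the same route as the paper: Fourier separation in $\theta$, the substitution $(\cos r)^{|k|}h(\cos 2r)$ producing the shift $-2n|k|$ and the Jacobi operator in $x=\cos 2r$, and the Jacobi orthogonality relation to normalize the spectral expansion (I checked that your constants, using $P_m^{n-1,|k|}(1)=\binom{m+n-1}{m}$ and the stated $L^2$ norm, do collapse to the claimed prefactor $\frac{\Gamma(n)}{2\pi^{n+1}}(2m+|k|+n)\binom{m+|k|+n-1}{n-1}$). The only cosmetic difference is that you phrase the last step as a full eigenfunction expansion of the kernel at the north pole rather than as computing the fundamental solution of $\partial_t-4\mathcal{L}_k$ mode by mode, which is the same computation.
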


\begin{proof}
Similarly to the Heisenberg group case, we observe that $\Dh$ commutes with $\frac{\partial}{\partial \theta}$, so the idea is to expand $p_t(r, \theta)$ as a Fourier series in $\theta$. We can write
\[
\rp_t(r, \theta)=\frac{1}{2\pi} \sum_{k=-\infty}^{+\infty} e^{ik\theta}\phi_k(t,r),
\]
where $\phi_k$ is the fundamental solution at 0 of the parabolic equation
\[
\frac{\partial\phi_k}{\partial t}=\frac{\partial^2\phi_k}{\partial r^2}+((2n-1)\cot r-\tan r)\frac{\partial\phi_k}{\partial r}-k^2\tan^2 r\phi_k.
\]
By writing $\phi_k(t,r)$ in the form
\[
\phi_k(t,r)=e^{-2n|k|t}(\cos r)^{|k|}g_k(t, \cos 2r), 
\]
we get 
\begin{equation*}
\frac{\partial g_k}{\partial t}=4\mathcal{L}_k(g_k),
\end{equation*}
where
\[
\mathcal{L}_k=(1-x^2)\frac{\partial^2}{\partial x^2}+[(|k|+1-n)-(|k|+1+n)x]\frac{\partial}{\partial x}.
\]
The eigenvectors of $\mathcal{L}_k$ solve the Jacobi differential equation, and are thus given by the Jacobi polynomials
\[
P_m^{n-1,|k|}(x)=\frac{(-1)^m}{2^m m!(1-x)^{n-1}(1+x)^{|k|}}\frac{d^m}{dx^m}((1-x)^{n-1+m}(1+x)^{|k|+m}),
\]
which satisfy
\[
\mathcal{L}_k(P_m^{n-1,|k|})(x)=-m(m+n+|k|)P_m^{n-1,|k|}(x).
\]

By using the fact that the family $(P_m^{n-1,|k|}(x)(1+x)^{|k|/2})_{m\geq0}$ is an orthogonal basis of $L^2([-1,1],(1-x)^{n-1}dx)$, such that
\[
\int_{-1}^1 P_m^{n-1,|k|}(x)P_l^{n-1,|k|}(x)(1-x)^{n-1}(1+x)^{|k|}dx=\frac{2^{n+|k|}}{2m+|k|+n}\frac{\Gamma(m+n)\Gamma(m+|k|+1)}{\Gamma(m+1)\Gamma(m+n+|k|)}\delta_{ml},
\]
we easily compute the fundamental solution of the operator $\frac{\partial }{\partial t}- 4\mathcal{L}_k$ and thus $\rp_t$.
\end{proof}

\

Note that as a by-product of the previous result we obtain that the $L^2$ spectrum of $-\Dh$ is given by
\begin{align}\label{spectreS1}
\mathbf{Sp} (-\Dh) =\left\{ 4m(m+k+n)+2kn, k \in \mathbb{N}, m \in \mathbb{N} \right\}.
\end{align}

\

We can give  another representation of the heat kernel $\rp_t(r,\theta)$ which is easier to handle analytically. The key idea is to observe that since $\Delta$ and $\frac{\partial}{\partial \theta}$ commute, we formally  have
\begin{align}\label{commutation}
e^{t\Dh}=e^{-t\frac{\partial^2}{\partial\theta^2}}e^{t\Delta}.
\end{align}

This gives a way to express the horizontal heat kernel in terms of the Riemannian one.
Let us recall that the Riemannian heat kernel on the sphere $\bS^{2n+1}$ is given by
\begin{equation}\label{eq6}
q_t(\cos\delta)=\frac{\Gamma(n)}{2\pi^{n+1}}\sum_{m=0}^{+\infty}(m+n)e^{-m(m+2n)t}C_m^n(\cos \delta),
\end{equation}
where, $\delta$ is the Riemannian distance based at the north pole and
\[
C_m^n(x)=\frac{(-1)^m}{2^m}\frac{\Gamma(m+2n)\Gamma(n+1/2)}{\Gamma(2n)\Gamma(m+1)\Gamma(n+m+1/2)}\frac{1}{(1-x^2)^{n-1/2}}\frac{d^m}{dx^m}(1-x^2)^{n+m-1/2},
\]
is a Gegenbauer polynomial.  Another expression of $q_t (\cos \delta)$  is 
\begin{equation}\label{heat_kernel_odd}
q_t (\cos \delta)= e^{n^2t} \left( -\frac{1}{2\pi \sin \delta} \frac{\partial}{\partial \delta} \right)^n V
\end{equation}
where $V(t,\delta)=\frac{1}{\sqrt{4\pi t}} \sum_{k \in \mathbb{Z}} e^{-\frac{(\delta-2k\pi)^2}{4t} }$ is a theta function.

Using the commutation (\ref{commutation}) and the formula $\cos \delta =\cos r \cos \theta$, we then infer the following proposition which  is easy to prove (see \cite{BW1} for the details).
\begin{prop}\label{prop1}
For $t>0$, $r\in[0,\pi/2)$, $ \theta\in[-\pi,\pi]$, 
\begin{equation}\label{eq8}
\rp_t(r, \theta)=\frac{1}{\sqrt{4\pi t}}\int_{-\infty}^{+\infty}e^{-\frac{(y+i \theta)^2}{4t} }q_t(\cos r\cosh y)dy.
\end{equation}
\end{prop}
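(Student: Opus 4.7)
The plan is to exploit the operator identity (\ref{commutation}), $e^{t\Dh}=e^{-t\partial_\theta^2}e^{t\Delta}$. This is immediate from Theorem \ref{commutation2}: since $\Dv=\partial^2/\partial\theta^2$ commutes with $\Dh$, one has $e^{t\Delta}=e^{t\Dh}e^{t\Dv}$, whence the stated identity. I will apply both sides to the Dirac mass at the north pole $p$: the left-hand side produces $\rp_t(r,\theta)$, while the right-hand side first forms the Riemannian heat kernel $q_t$ and then applies the ``backward'' operator $e^{-t\partial_\theta^2}$ in the $\theta$ variable. In the coordinates (\ref{cylinder}), the formula $\cos\delta=\cos r\cos\theta$ for the Riemannian distance to $p$ shows that $q_t$ based at $p$ equals $F_{r}(\theta)=q_t(\cos r\cos\theta)$, an entire function of $\theta$ for fixed $r$ and $t$.

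The main step will be to justify the representation
\[
(e^{-t\partial_\theta^2}F)(\theta)=\frac{1}{\sqrt{4\pi t}}\int_{-\infty}^{+\infty}e^{-(y+i\theta)^2/(4t)}F(iy)\,dy
\]
for suitable entire $F$. The heuristic is transparent: the forward semigroup $e^{s\partial_\theta^2}$ is convolution with the Gaussian $(4\pi s)^{-1/2}e^{-y^2/(4s)}$, and analytically continuing $s\to -t$ while rotating the contour $y\mapsto iy$ restores Gaussian decay and produces the claimed formula. I will verify it on the Fourier modes $F(\theta)=e^{ik\theta}$: then $F(iy)=e^{-ky}$, and completing the square in the Gaussian exponent evaluates the integral to $e^{ik\theta+tk^2}$, which is the correct eigenvalue of $e^{-t\partial_\theta^2}$ on the $k$-th mode. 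Applied to $F_r$, the identity $\cos(iy)=\cosh y$ gives $F_r(iy)=q_t(\cos r\cosh y)$, and substitution yields (\ref{eq8}).

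The hard part will be the rigorous justification of the contour shift. Using (\ref{heat_kernel_odd}) one sees that $q_t(\cos r\cosh y)$ is entire in $y$ with growth at the critical rate $\sim e^{y^2/(4t)}$ as $|y|\to\infty$, so the integrand of (\ref{eq8}) has modulus of order $1$ with an oscillation of frequency proportional to $\theta/t$ and is only conditionally convergent. The cleanest way to close this gap will be to work at the level of Fourier series in $\theta$: expanding $\rp_t(r,\theta)=\frac{1}{2\pi}\sum_k e^{ik\theta}\phi_k(t,r)$ as in the proof of Proposition \ref{heatHopf} and applying the Fourier-mode identity above termwise, each individual mode equality becomes an absolutely convergent integral, and one need only match the resulting coefficients against the explicit expression for $\phi_k$ in terms of Jacobi polynomials via the standard orthogonality relations already used in the proof of Proposition \ref{heatHopf}.
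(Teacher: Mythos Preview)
Your approach is essentially the same as the paper's: the paper simply invokes the commutation identity $e^{t\Dh}=e^{-t\partial_\theta^2}e^{t\Delta}$ together with $\cos\delta=\cos r\cos\theta$ and defers the details to \cite{BW1}, which is exactly the skeleton you have fleshed out. Your additional care about the conditional convergence and the Fourier-mode verification goes beyond what the paper records, but is in the same spirit and is a reasonable way to make the formal identity rigorous.
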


Applications of this formula are given in \cite{BW1}. We can, in particular, deduce from it small asymptotics of the kernel when $t \to 0$. Interestingly, these small-time asymptotics allow to compute explicitly the sub-Riemannian distance. For a study of the distance and related geodesics, we refer to \cite{CMV} and \cite{MM}.

\subsection{The quaternionic Hopf fibration} 

We study now a second example of Riemannian submersion with totally geodesic fibers and compact base: the quaternionic Hopf fibrration. Up to exotic examples, the Hopf fibration and the quaternionic Hopf fibration are the only Riemannian submersions of the sphere with totally geodesic fibers (see \cite{escobales}). The computation of the horizontal heat kernel was first done in \cite{BW2}.

\

Let
\[
\mathbb{H} =\{ q=t+x I +y J +z K, (t,x,y,z) \in \mathbb{R}^4 \},
\]
be the field of quaternions,  where $I,J,K$ are the Pauli matrices:
\[
I=\left(
\begin{array}{ll}
i & 0 \\
0&-i 
\end{array}
\right), \quad 
J= \left(
\begin{array}{ll}
0 & 1 \\
-1 &0 
\end{array}
\right), \quad 
K= \left(
\begin{array}{ll}
0 & i \\
i &0 
\end{array}
\right).
\]
The quaternionic norm is given by
\[
\| q \|^2 =t^2 +x^2+y^2+z^2.
\]

Consider now the quaternionic unit sphere which is given by
\[
\bS^{4n+3}=\lbrace q=(q_1,\cdots,q_{n+1})\in \mathbb{H}^{n+1}, \sum_{i=1}^{n+1} \| q_i \|^2 =1\rbrace.
\]

 There is an isometric group action of the Lie group $\mathbf{SU}(2)$ on $\bS^{4n+3}$ which is given by, 
 $$ g \cdot (q_1,\cdots, q_{n+1}) = (g q_1,\cdots, g q_{n+1}). $$
 
 The three generators of this action are given by
 \[
 \frac{d}{d\theta}f(e^{I\theta}q)\mid_{\theta=0}=\sum_{i=1}^{n+1}\left(-x_i\frac{\partial f}{\partial t_i}+t_i\frac{\partial f}{\partial x_i}-z_i\frac{\partial f}{\partial y_i}+y_i\frac{\partial f}{\partial z_i}\right),
 \]
  \[
 \frac{d}{d\theta}f(e^{J\theta}q)\mid_{\theta=0}=\sum_{i=1}^{n+1}\left(-y_i\frac{\partial f}{\partial t_i}+z_i\frac{\partial f}{\partial x_i}+t_i\frac{\partial f}{\partial y_i}-x_i\frac{\partial f}{\partial z_i}\right).
 \]
 and
 \[
 \frac{d}{d\theta}f(e^{K\theta}q)\mid_{\theta=0}=\sum_{i=1}^{n+1}\left(-z_i\frac{\partial f}{\partial t_i}-y_i\frac{\partial f}{\partial x_i}+x_i\frac{\partial f}{\partial y_i}+t_i\frac{\partial f}{\partial z_i}\right).
 \]

The quotient space $\bS^{4n+3} / \mathbf{SU}(2)$ is the projective quaternionic space $\mathbb{HP}^n$ and the projection map $\pi :  \bS^{4n+3} \to \mathbb{HP}^n$ is a Riemannian submersion with totally geodesic fibers isometric to $\mathbf{SU}(2)$. The fibration
\[
\mathbf{SU}(2) \to \bS^{4n+3} \to \mathbb{HP}^n
\]
is called the quaternionic Hopf fibration.

\

As for the classical Hopf fibration, the first task is to introduce a convenient set of coordinates. Let  $(w_1,\cdots,w_n)$ be the local inhomogeneous coordinates for $\mathbb{HP}^n$ given by $w_j=q_{n+1}^{-1} q_j$ and $\theta_1,\theta_2,\theta_3$ be the local exponential coordinates on the $\mathbf{SU}(2)$ fiber. We can locally parametrize $\bS^{4n+3}$ by the coordinates

\begin{align}\label{cylinder}
(w,\theta_1,\theta_2,\theta_3 )\longrightarrow \left( (\cos r)  e^{ I\theta_1 +J\theta_2 +K\theta_3} w  ,(\cos r) e^{ I\theta_1 +J\theta_2 +K\theta_3} \right),
\end{align}
where $r=\arctan \sqrt{\sum_{j=1}^{n}|w_j|^2}$.

\

The horizontal Laplacian $\Dh$ is invariant by the action on the variable $w$ of the group of  isometries of $\mathbb{HP}^n$ that fix the north pole of  $\bS^{4n+3}$ and by the action on the variables $\theta_1,\theta_2,\theta_3$ of the group of isometries of $\mathbf{SU}(2)$ that fix the identity. Thus the heat kernel of $\Dh$ only depends on the variables $r=\arctan \sqrt{\sum_{j=1}^{n}|w_j|^2}$ and $\eta= \sqrt{ \theta_1^2+\theta_2^2+\theta_3^2}$. Observe that $\eta$ is the distance based at the identity in  $\mathbf{SU}(2)$ because
\[
e^{ I\theta_1 +J\theta_2 +K\theta_3}=\cos \eta +\frac{\sin \eta}{\eta} \left(  I\theta_1 +J\theta_2 +K\theta_3 \right).
\]

\begin{proposition} 
Let us denote by $\rho$ the submersion from $\bS^{4n+3} $ to $ [0,\pi/2) \times [0,\pi ) $ such that 
\[
\rho  \left( (\cos r)  e^{ I\theta_1 +J\theta_2 +K\theta_3} w  ,(\cos r) e^{ I\theta_1 +J\theta_2 +K\theta_3} \right)=\left(r, \eta \right),
\]
where  $r=\arctan \sqrt{\sum_{j=1}^{n}|w_j|^2}$ and $\eta= \sqrt{ \theta_1^2+\theta_2^2+\theta_3^2}$. Then for every smooth function $f:  [0,\pi/2) \times [0,\pi ) \to \R$,
\[
\Dh(f \circ \rho)=(\rL f) \circ \rho, \quad \Delta_\mathcal{V} ( f \circ \rho) = (\overline{\Delta}_\mathcal{V}  f) \circ \rho
\]
where
\begin{equation*}
\rL=\frac{\partial^2}{\partial r^2}+((4n-1)\cot r-3\tan r)\frac{\partial}{\partial r}+\tan^2r \left(\frac{\partial^2}{\partial \eta^2}+2\cot \eta\frac{\partial}{\partial \eta}\right).
\end{equation*}
and 
\begin{align*}
\overline{\Delta}_\mathcal{V}=\frac{\partial^2}{\partial \eta^2}+2\cot \eta\frac{\partial}{\partial \eta}.
\end{align*}
\end{proposition}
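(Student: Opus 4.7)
The plan is to mirror the proof given above for the complex Hopf fibration: decompose $\Delta_{\mathcal{H}}=\Delta-\Delta_{\mathcal{V}}$, compute $\Delta$ and $\Delta_{\mathcal{V}}$ separately on a function of the form $f\circ\rho$, and read off the two radial parts. The essential geometric input is a global double warped product model for the round metric on $\mathbb{S}^{4n+3}$.

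I first parametrize $\mathbb{S}^{4n+3}\subset \mathbb{H}^{n+1}$ by
\[
(r,v,g)\longmapsto(\sin r\cdot v,\cos r\cdot g),\qquad r\in[0,\pi/2],\ v\in \mathbb{S}^{4n-1}\subset \mathbb{H}^n,\ g\in \mathbb{S}^3\subset \mathbb{H}.
\]
A direct differentiation in $\mathbb{R}^{4n+4}$ shows that the induced metric is $dr^2+\sin^2 r\,g_{\mathbb{S}^{4n-1}}+\cos^2 r\,g_{\mathbb{S}^3}$. Comparison with the coordinates of the statement gives $\tan r=\|w\|$, $g=e^{I\theta_1+J\theta_2+K\theta_3}$, $v=gw/\|w\|$, so that $F:=f\circ\rho$ is independent of $v$ and depends on $g$ only through $\eta(g)=\sqrt{\theta_1^2+\theta_2^2+\theta_3^2}=d_{\mathbb{S}^3}(1,g)$. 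Plugging $F$ into the standard double warped product formula for the Laplace-Beltrami operator, the $\mathbb{S}^{4n-1}$-contribution drops out and, substituting the familiar radial part $\partial^2_\eta+2\cot\eta\,\partial_\eta$ of the Laplacian on the round $3$-sphere based at $1$, I obtain
\[
\Delta F=F_{rr}+\bigl((4n-1)\cot r-3\tan r\bigr)F_r+\frac{1}{\cos^2 r}\bigl(F_{\eta\eta}+2\cot\eta\,F_\eta\bigr).
\]

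For the vertical Laplacian I use that the three generators $V_1,V_2,V_3$ of the $\mathbf{SU}(2)$-action are Killing fields and pointwise orthonormal (since $I,J,K$ are orthonormal pure imaginary quaternions and left multiplication by a unit quaternion is a Euclidean isometry of $\mathbb{H}^{n+1}$). Being Killing gives $V_i^*=-V_i$ in $L^2(\mu)$, hence $\Delta_{\mathcal{V}}=V_1^2+V_2^2+V_3^2$. Applied to $F$, a short computation using the independence of $F$ in $v$ and the fact that each $\{e^{X_i t}\}_{t\in\mathbb{R}}$ is a one-parameter subgroup yields
\[
V_i^2 F(r,v,g)=\left.\tfrac{d^2}{ds^2}\right|_{s=0}F(r,e^{X_is}g),\qquad (X_1,X_2,X_3)=(I,J,K).
\]
Summing over $i$, the result is the bi-invariant Casimir Laplacian of $\mathbb{S}^3$ applied to the $\eta$-profile, which equals $F_{\eta\eta}+2\cot\eta\,F_\eta$. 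This is the stated $\overline{\Delta}_{\mathcal{V}}f$, and subtracting from $\Delta F$ while using $\sec^2 r-1=\tan^2 r$ produces the stated expression for $\overline{\Delta}_{\mathcal{H}}f$.

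The step deserving most care is the identification of $\Delta_{\mathcal{V}}$ with the intrinsic round Laplacian on the $\eta$-profile. The warped product trivialises the $\mathbb{S}^3$-factor at fixed $v$, whereas the Hopf fibers are the \emph{diagonal} $\mathbf{SU}(2)$-orbits that move $v$ and $g$ simultaneously, so the matching is not automatic. The hypothesis that $F$ is independent of $v$ is precisely what reconciles the two pictures: for such $F$ only the $g$-component of $V_i$ contributes, and because left and right translations commute on $\mathbf{SU}(2)$, that contribution reproduces the Casimir; everywhere else the computation is routine.
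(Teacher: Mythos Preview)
Your proof is correct and takes a genuinely different route from the paper's. The paper introduces four Riemannian distance functions $\delta_1,\dots,\delta_4$ based at the points $(0,1),(0,I),(0,J),(0,K)$, asserts that the sphere Laplacian acts on functions of $(\delta_1,\dots,\delta_4)$ as the separated sum $\sum_i\bigl(\partial_{\delta_i}^2+(4n+2)\cot\delta_i\,\partial_{\delta_i}\bigr)$, and then performs the change of variables to $(r,\eta)$ via $\cos r=\bigl(\sum_i\cos^2\delta_i\bigr)^{1/2}$ and $\tan\eta=\bigl(\sum_{i\ge 2}\cos^2\delta_i\bigr)^{1/2}/\cos\delta_1$.

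Your approach replaces this with the join decomposition $\mathbb{S}^{4n+3}\cong\mathbb{S}^{4n-1}*\mathbb{S}^3$ and the standard double warped product Laplacian, which is textbook and requires no justification of the absence of cross-terms. The price is that you must then handle $\Delta_{\mathcal V}$ separately and confront the fact that the $\mathbf{SU}(2)$-orbits are diagonal in the $(v,g)$ factors rather than tangent to the $\mathbb{S}^3$ factor; you identify this subtlety correctly and resolve it by using the $v$-independence of $F$ together with the identity $\sum_i R_i^2=\Delta_{\mathbb{S}^3}$ for the right-invariant frame. The paper's route sidesteps this issue entirely because it never isolates the warped $\mathbb{S}^3$ factor, but in exchange it relies on a separability statement for $\Delta$ in the $\delta_i$-coordinates that is true (the gradients $\nabla\delta_i$ are pairwise orthogonal) but left unproved. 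Both arguments are of comparable length; yours is the more self-contained of the two.
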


\begin{proof}
The formula for $\overline{\Delta}_\mathcal{V}$ is clear because $\mathbf{SU}(2)$ is isometric to the sphere $\bS^3$. For the horizontal Laplacian, the proof follows the same lines as in the case of the classical Hopf fibration.  Let $\delta_1$ be the distance based at the point $(0,1) \in \mathbb{H}^n \times \mathbb{H}$, $\delta_2$ be the distance based at the point $(0,I) \in \mathbb{H}^n \times \mathbb{H}$, $\delta_3$ be the distance based at the point $(0,J) \in \mathbb{H}^n \times \mathbb{H}$, and $\delta_4$ be the distance based at the point $(0,K) \in \mathbb{H}^n \times \mathbb{H}$.

The Laplace-Beltrami operator $\Delta$ acts on functions depending only on $\delta_1,\delta_2,\delta_3,\delta_4$ as
\[
\sum_{i=1}^4 \left(  \frac{\partial^2}{\partial \delta_i^2} + (4n+2) \cot \delta_i \frac{\partial }{\partial \delta_i} \right)
\]
Observing now that
\begin{align*}
\begin{cases}
\cos r =\sqrt{\cos^2 \delta_1+\cos^2 \delta_2 +\cos^2 \delta_3+\cos^2 \delta_4}\\
\tan \eta=\frac{\sqrt{\cos^2 \delta_2 +\cos^2 \delta_3+\cos^2 \delta_4}}{\cos \delta_1}
\end{cases}
\end{align*}
finishes the proof after a simple, but tedious, change of variables.
\end{proof}

As a consequence of the previous result, we can check that the Riemannian measure of $\bS^{4n+3} $  in the coordinates $(r,\eta)$,  which is the symmetric measure for  $\rL$ is given by
\[
d\overline{\mu}=\frac{8\pi^{2n+1}}{\Gamma(2n)}(\sin r)^{4n-1}(\cos r)^3(\sin\eta)^2drd\eta.
\]

As before, we denote by $\rp_t$ the heat kernel at 0 of $\rL$.
\begin{proposition}\label{heatquaternion}
For $t>0$, $r\in[0,\frac{\pi}{2})$, $ \eta\in[0,\pi]$, 
\begin{equation}\label{pt}
\rp_t(r,\eta)=\sum_{m=0}^{+\infty}\sum_{k=0}^{\infty}\alpha_{k,m}e^{-\lambda_{k,m}t}\frac{\sin (m+1)\eta}{\sin \eta}(\cos r)^mP_k^{2n-1,m+1}(\cos 2r)
\end{equation}
where 
\[
\alpha_{k,m}=\frac{\Gamma(2n)}{2\pi^{2n+2}}(2k+m+2n+1)(m+1){k+m+2n\choose 2n-1}, 
\]
\[
\lambda_{k,m}=4\left[ k(k+2n+m+1)+nm\right],
\]
 and 
\[
P_k^{2n-1,m+1}(x)=\frac{(-1)^k}{2^kk!(1-x)^{2n-1}(1+x)^{m+1}}\frac{d^k}{dx^k}\left((1-x)^{2n-1+k}(1+x)^{m+1+k} \right).
\] 
is a Jacobi polynomial.
\end{proposition}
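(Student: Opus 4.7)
The plan is to imitate the proof of Proposition \ref{heatHopf} for the classical Hopf fibration. By Theorem \ref{commutation2} the fibration being totally geodesic implies $\Dh$ and $\Dv$ commute; restricted to functions depending only on $(r,\eta)$ this gives $[\rL,\overline{\Delta}_{\mathcal V}]=0$, which allows simultaneous spectral decomposition of the two radial operators.

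First I would expand in zonal spherical functions on the fiber $\mathbf{SU}(2)\simeq\bS^3$. The vertical operator $\overline{\Delta}_{\mathcal V}=\partial_\eta^2+2\cot\eta\,\partial_\eta$ is the radial Laplacian of $\bS^3$; its zonal eigenfunctions are the characters
\[
U_m(\eta)=\frac{\sin((m+1)\eta)}{\sin\eta},\qquad \overline{\Delta}_{\mathcal V}U_m=-m(m+2)\,U_m,
\]
which form an orthogonal basis of $L^2([0,\pi],\sin^2\eta\,d\eta)$, with $\int_0^\pi U_m U_{m'}\sin^2\eta\,d\eta=(\pi/2)\delta_{mm'}$ and $U_m(0)=m+1$. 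Writing $\rp_t(r,\eta)=\sum_{m\ge 0}U_m(\eta)\,\phi_m(t,r)$ and inserting this into $\partial_t\rp_t=\rL\rp_t$, the explicit form of $\rL$ reduces the problem to the family of one-dimensional parabolic equations
\[
\partial_t\phi_m=\partial_r^2\phi_m+\bigl((4n-1)\cot r-3\tan r\bigr)\partial_r\phi_m-m(m+2)\tan^2 r\,\phi_m.
\]

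Next I would reduce each of these equations to a Jacobi equation via the ansatz $\phi_m(t,r)=e^{-\gamma_m t}(\cos r)^m g_m(t,\cos 2r)$. The exponent $m$ is forced by the identity $\mu(\mu+2)=m(m+2)$, which is exactly what is needed to cancel the $\tan^2 r$ centrifugal term after conjugation; the constant $\gamma_m$ is then fixed by the residual zero-order correction produced by this conjugation. The change of variable $x=\cos 2r$ turns the resulting spatial operator into $4\mathcal{L}_m$, where
\[
\mathcal{L}_m=(1-x^2)\partial_x^2+\bigl[(m+2-2n)-(m+2n+2)x\bigr]\partial_x
\]
is the Jacobi operator with parameters $(2n-1,m+1)$. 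Its eigenfunctions are the polynomials $P_k^{2n-1,m+1}$ with eigenvalues $-k(k+2n+m+1)$, and combining with $\gamma_m$ recovers the claimed spectrum $\lambda_{k,m}=4k(k+2n+m+1)+nm$.

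Finally I would fix the coefficients $\alpha_{k,m}$ by expanding the initial condition (the Dirac mass at the origin, concentrated at $r=0$ in these coordinates) in the joint eigenbasis $\{U_m(\eta)(\cos r)^m P_k^{2n-1,m+1}(\cos 2r)\}$ with respect to the symmetric measure $d\overline\mu$. This combines the $\bS^3$ Plancherel relation and the value $U_m(0)=m+1$ with the Jacobi orthogonality
\[
\int_{-1}^1\bigl(P_k^{2n-1,m+1}(x)\bigr)^2(1-x)^{2n-1}(1+x)^{m+1}\,dx=\frac{2^{2n+m+1}}{2k+2n+m+1}\cdot\frac{\Gamma(k+2n)\,\Gamma(k+m+2)}{\Gamma(k+1)\,\Gamma(k+2n+m+1)},
\]
together with the change of variable $x=\cos 2r$ which, after absorbing the $(\cos r)^{2m}$ factor coming from the ansatz, matches the weight $(\sin r)^{4n-1}(\cos r)^{2m+3}dr$ in $d\overline\mu$ with $(1-x)^{2n-1}(1+x)^{m+1}dx$ up to explicit constants. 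The main obstacle is precisely this last bookkeeping step: the separation of variables and the Jacobi reduction are short classical computations, but to land on the exact coefficient $\alpha_{k,m}=\frac{\Gamma(2n)}{2\pi^{2n+2}}(2k+m+2n+1)(m+1)\binom{k+m+2n}{2n-1}$ one must simultaneously track the $\mathbf{SU}(2)$ Plancherel factor, the $\Gamma$-function normalization of the Jacobi polynomials, and the normalization of $d\overline{\mu}$ recorded just before the statement.
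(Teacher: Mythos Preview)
Your proposal is correct and follows essentially the same route as the paper: expand in the $\mathbf{SU}(2)$ zonal functions $U_m$, reduce each $\phi_m$ to a Jacobi equation via the conjugation $\phi_m=e^{-\gamma_m t}(\cos r)^m g_m(t,\cos 2r)$, and then fix the coefficients by the Jacobi orthogonality relation together with the normalization of $d\overline{\mu}$. The paper is slightly terser on the last bookkeeping step (it simply records the Jacobi norm and says the $\alpha_{k,m}$ follow from the initial condition), but the strategy is identical.
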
 
\begin{proof}
The idea is to   expand the subelliptic kernel in spherical harmonics as follows, 
\[
p_t(r,\eta)=\sum_{m=0}^{+\infty}\frac{\sin (m+1)\eta}{\sin \eta}\phi_m(t,r)
\]
where $\frac{\sin (m+1)\eta}{\sin \eta}$ is the eigenfunction of $\tilde{\Delta}_{SU(2)}=\frac{\partial^2}{\partial \eta^2}+2\cot \eta\frac{\partial}{\partial \eta}$ which is associated to the eigenvalue $-m(m+2)$. To determine $\phi_m$, we use  $\frac{\partial p_t}{\partial t}=\tilde{L}p_t$ and find that
\[
\frac{\partial\phi_m}{\partial t}=\frac{\partial^2\phi_m}{\partial r^2}+\left((4n-1)\cot r-3\tan r \right)\frac{\partial\phi_m}{\partial r}-m(m+2)\tan^2r\phi_m.
\]
Let $\phi_m(t,r)=e^{-4nmt}(\cos r)^m\varphi_m(t,r)$, then $\varphi_m(t,r)$ satisfies the equation
\[
\frac{\partial\varphi _m}{\partial t}=\frac{\partial^2\varphi_m}{\partial r^2}+[(4n-1)\cot r-(2m+3)\tan r]\frac{\partial\varphi_m}{\partial r}.
\]
We now change the variable and denote by $\varphi_m(t,r)=g_m(t,\cos 2r)$, then we have that $g_m(t,x)$ satisfies the equation
\[
\frac{\partial g_m}{\partial t}=4(1-x^2)\frac{\partial^2 g_m}{\partial x^2}+4[(m+2-2n)-(2n+m+2)x]\frac{\partial g_m}{\partial x}.
\]
We denote $\Psi_m=(1-x^2)\frac{\partial^2 }{\partial x^2}+[(m+2-2n)-(2n+m+2)x]\frac{\partial}{\partial x}$, and find that
\[
\frac{\partial g_m}{\partial t}=4\Psi_m(g_m).
\]
The equation $$\Psi_m(g_m)+k(k+2n+m+1)g_m=0$$ is a Jacobi differential equation for all $k\geq 0$.  We denote the eigenvector of $\Psi_m$ corresponding to the eigenvalue $-k(k+2n+m+1)$ by $P_k^{2n-1,m+1}(x)$, then it is known that 
\[
P_k^{2n-1,m+1}(x)=\frac{(-1)^k}{2^kk!(1-x)^{2n-1}(1+x)^{m+1}}\frac{d^k}{dx^k}\left((1-x)^{2n-1+k}(1+x)^{m+1+k} \right).
\] 
At the end we can therefore write the spectral decomposition as
\[
p_t(r,\eta)=\sum_{m=0}^{+\infty}\sum_{k=0}^{\infty}\alpha_{k,m}e^{-4[k(k+2n+m+1)+nm]t}\frac{\sin (m+1)\eta}{\sin \eta}(\cos r)^mP_k^{2n-1,m+1}(\cos 2r)
\]
where $\alpha_{k,m}$ are  determined by considering the initial condition.

Note that $(P_k^{2n-1,m+1}(x)(1+x)^{(m+1)/2})_{k\geq0}$ is an orthogonal basis of the Hilbert space $L^2([-1,1],(1-x)^{2n-1}dx)$, more precisely
\begin{align*}
 & \int_{-1}^1 P_k^{2n-1,m+1}(x)P_l^{2n-1,m+1}(x)(1-x)^{2n-1}(1+x)^{m+1}dx \\
 =& \frac{2^{2n+m+1}}{2k+m+2n+1}\frac{\Gamma(k+2n)\Gamma(k+m+2)}{\Gamma(k+1)\Gamma(k+2n+m+1)}\delta_{kl}.
\end{align*}
For a smooth function $f(r, \theta)$, we can write
\[
f(r, \eta)=\sum_{m=0}^{+\infty}\sum_{k=0}^{+\infty} b_{k,m}\frac{\sin (m+1)\eta}{\sin \eta}P_k^{2n-1,m+1}(\cos 2r)\cdot(\cos r)^{m}
\]
where the $ b_{k,m}$'s are constants. We obtain then
\[
f(0,0)=\sum_{m=0}^{+\infty}\sum_{k=0}^{+\infty} b_{k,m}(m+1)P_k^{2n-1,m+1}(1).
\]
and we observe that $P_k^{2n-1,m+1}(1)={2n-1+k\choose k}$.
The  measure $d\mu$ is given in cylindric coordinates by
\[
d\mu_r=\frac{8\pi^{2n+1}}{\Gamma(2n)}(\sin r)^{4n-1}(\cos r)^3(\sin\eta)^2drd\eta
\]
Moreover, since
\begin{eqnarray*}
& &\int_{0}^\pi\int_0^\frac{\pi}{2} p_t(r, \eta){f(-r, -\eta)}d\mu_r \\
&=&\frac{4\pi^{2n+2}}{\Gamma(2n)}\sum_{m=0}^{+\infty}\sum_{k=0}^{+\infty}\alpha_{k,m}b_{k,m}e^{-\lambda_{k,m}t}
\left(\int_0^{\frac{\pi}{2}}(\cos r)^{2m+3}|P_k^{2n-1,m+1}|^2(\sin r)^{4n-1}dr\right)\\
&=&\frac{2\pi^{2n+2}}{\Gamma(2n)} \sum_{m=0}^{+\infty}\sum_{k=0}^{+\infty}\frac{\alpha_{k,m}b_{k,m}e^{-\lambda_{m,k}t}}{2k+m+2n+1}\frac{\Gamma(k+2n)\Gamma(k+m+2)}{\Gamma(k+1)\Gamma(k+2n+m+1)}
%||P_m^{n-1,|k|}||^2
\end{eqnarray*}
where $\lambda_{k,m}=4k(k+2n+m+1)+nm$, we obtain that
\[
\lim_{t\rightarrow 0}\int_{0}^\pi\int_0^\frac{\pi}{2} p_tfd\mu_r= f(0,0)
\]
as soon as $\alpha_{k,m}=\frac{\Gamma(2n)}{2\pi^{2n+2}}(2k+m+2n+1)(m+1){k+m+2n\choose 2n-1}$.\end{proof}
As a byproduct of the spectral expansion of $\rp_t$ we obtain the spectrum of $-\Dh$,
\begin{align}\label{spectreSU}
\mathbf{Sp}(-\Dh)=\{4\left[k(k+2n+m+1)+nm\right], k\ge0, m \ge 0\}.
\end{align}

Comparing this expansion with the result we obtained in  Proposition \ref{heatHopf}, we obtain a very nice formula relating $\rp_t$ to the horizontal kernel of the usual Hopf fibration.

\begin{proposition}\label{relationHopf}
Let $\hat{p}_t(r, \theta)$ be the radial horizontal kernel of the usual  Hopf fibration   $\bS^{4n+1} \to  \mathbb{CP}^{2n}$ , then for $r\in[0,\frac{\pi}{2})$, $ \theta\in[0,\pi]$,
\begin{equation}\label{pt-qt}
\rp_t(r,\theta)=-\frac{e^{4nt}}{2\pi\sin \theta\cos r}\frac{\partial}{\partial \theta} \hat{p}_t(r, \theta).
\end{equation}
\end{proposition}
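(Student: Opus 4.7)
The plan is to verify the identity by direct term-by-term comparison of the two explicit spectral expansions already available: Proposition \ref{heatHopf} (applied with $n$ replaced by $2n$, since the ``usual'' Hopf fibration being considered is $\bS^{4n+1} \to \mathbb{CP}^{2n}$) gives $\hat{p}_t(r,\theta)$, while Proposition \ref{heatquaternion} gives $\rp_t(r,\theta)$. No further analytic input is needed; the argument is purely algebraic, and the absolute convergence of both series on the region of interest allows free manipulation.

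First, I would rewrite $\hat{p}_t(r,\theta)$ by pairing the $\pm k$ contributions of its Fourier series. The summand depends on $k$ only through $|k|$ except for the $e^{ik\theta}$ factor, so the double series collapses into a $\theta$-independent $k=0$ contribution plus a cosine series in $\theta$. Differentiating in $\theta$ annihilates the $k=0$ term and replaces each $2\cos(k\theta)$ by $-2k\sin(k\theta)$, producing
\[
\partial_\theta \hat p_t(r,\theta) = -\frac{\Gamma(2n)}{\pi^{2n+1}}\sum_{k\ge 1}\sum_{m\ge 0} k(2m+k+2n)\binom{m+k+2n-1}{2n-1}e^{-\hat\lambda_{m,k}t}(\cos r)^{k}\sin(k\theta)\,P_m^{2n-1,k}(\cos 2r),
\]
where $\hat\lambda_{m,k}=4m(m+k+2n)+4kn$ is the Hopf eigenvalue read off from Proposition \ref{heatHopf} after $n\mapsto 2n$.

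Next, I would multiply by $-\frac{e^{4nt}}{2\pi\sin\theta\cos r}$ and reindex by setting $k=M+1$ (so $M\ge 0$) and $m=K$, so that $(\cos r)^{k-1}=(\cos r)^{M}$. Under this substitution the quotient $\sin(k\theta)/\sin\theta$ becomes the $\mathbf{SU}(2)$-character $\sin((M+1)\theta)/\sin\theta$ that appears in Proposition \ref{heatquaternion}; the Jacobi polynomial becomes $P_K^{2n-1,M+1}(\cos 2r)$; the combinatorial prefactor becomes $(M+1)(2K+M+2n+1)\binom{K+M+2n}{2n-1}$, which is exactly $\alpha_{K,M}$; and the exponent satisfies
\[
4n - \hat\lambda_{K,M+1} = 4n - 4K(K+M+1+2n) - 4(M+1)n = -\bigl(4K(K+2n+M+1)+4nM\bigr) = -\lambda_{K,M}.
\]
Every summand therefore coincides with the corresponding summand of Proposition \ref{heatquaternion}, giving the identity.

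The only obstacle is the careful bookkeeping of indices and coefficients, which is routine. Conceptually, the identity encodes the elementary observation that the $\mathbf{SU}(2)$-characters $\sin((M+1)\theta)/\sin\theta$ are obtained from the $\mathbf{U}(1)$-Fourier modes $\cos((M+1)\theta)$ by the operator $-\frac{1}{(M+1)\sin\theta}\partial_\theta$, so differentiating the Hopf Fourier expansion along a fiber direction reorganises it into an $\mathbf{SU}(2)$-character expansion; the universal factor $e^{4nt}$ precisely accounts for the shift $4(M+1)n-4Mn=4n$ between the ``vertical bottom-of-spectrum'' contributions to the two Hopf eigenvalues.
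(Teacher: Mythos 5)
Your proof is correct and is precisely the argument the paper intends: Proposition \ref{relationHopf} is presented there as an immediate consequence of ``comparing'' the spectral expansion of Proposition \ref{heatquaternion} with that of Proposition \ref{heatHopf} (with $n$ replaced by $2n$), and your term-by-term verification supplies exactly the omitted bookkeeping. Note only that your exponent computation uses $\lambda_{K,M}=4K(K+2n+M+1)+4nM$, which is what the derivation inside Proposition \ref{heatquaternion} actually produces via the substitution $\phi_m=e^{-4nmt}(\cos r)^m g_m$ (the ``$+nm$'' printed in its statement is a typo for ``$+4nm$''), so your identification of the eigenvalues is the right one.
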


As in the case of the usual Hopf fibration, we can obtain an alternative representation $\rp_t(r,\theta)$ which we derive from the decomposition $\Delta=\Dh+\Dv$.

\

We denote by $q_t (\cos \delta)$ the heat kernel at 0  of the  operator $ \frac{\partial^2}{\partial \delta^2} + (4n+2) \cot \delta \frac{\partial }{\partial \delta}$. We recall that
 \begin{equation}
q_t{(\cos\delta)}=\frac{\Gamma(2n+1)}{2\pi^{2n+2}}\sum_{m=0}^{+\infty}(m+2n+1)e^{-m(m+4n+2)t}C_m^{2n+1}(\cos \delta),
\end{equation}
where $\delta$ is the Riemannian distance based at the north pole and
\[
C_m^{2n+1}(x)=\frac{(-1)^m}{2^m}\frac{\Gamma(m+4n+2)\Gamma(2n+3/2)}{\Gamma(4n+2)\Gamma(m+1)\Gamma(2n+m+3/2)}\frac{1}{(1-x^2)^{2n+1/2}}\frac{d^m}{dx^m}(1-x^2)^{2n+m+1/2}
\]
is a Gegenbauer polynomial.  

\

 If we denote $\overline{\Delta}_{\mathbf{SL}(2)}=\frac{\partial^2}{\partial \eta ^2}+2 \coth \eta \frac{\partial}{\partial \eta} $, then from the fact that $\overline{\Delta}_\mathcal{V}=\frac{\partial^2}{\partial \eta^2}+2\cot \eta\frac{\partial}{\partial \eta}$,  it is not hard to see that
\begin{equation}\label{pt-qts}
p_t(r,\eta)=(e^{t\overline{\Delta}_{\mathbf{SL}(2)}}f_t)(r,-i\eta),
\end{equation}
where $f_t(\eta)=q_t(\cos r \cos \eta)$. Therefore, an integral representation of $p_t$,  can be obtained from an explicit expression of the heat semigroup $e^{t\tilde{\Delta}_{\mathbf{SL}(2)}}$. 
\begin{lemma}
Let $\overline{\Delta}_{\mathbf{SL}(2)}=\frac{\partial^2}{\partial \eta ^2}+2 \coth \eta \frac{\partial}{\partial \eta} $. For every $f:\mathbb{R}_{\ge 0} \to \mathbb{R}$ in the domain of $\overline{\Delta}_{\mathbf{SL}(2)}$, we have:
\begin{equation}\label{semigroup}
(e^{t\overline{\Delta}_{\mathbf{SL}(2)}} f)(\eta)=\frac{e^{-t}}{\sqrt{\pi t}} \int_0^{+\infty} \frac{ \sinh r \sinh \left(  \frac{\eta r}{2t}\right) }{\sinh \eta} e^{-\frac{r^2+\eta^2}{4t}} f( r ) dr, \quad t \ge 0, \eta \ge 0.
\end{equation}
\end{lemma}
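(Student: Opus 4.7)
The plan is to reduce the problem to the one-dimensional Euclidean heat equation via an explicit conjugation, which is possible because the operator $\overline{\Delta}_{\mathbf{SL}(2)}$ coincides with the radial Laplacian on hyperbolic $3$-space, where the measure $\sinh^{2}\eta\,d\eta$ admits the classical Millson-type trivialization.

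First, I would define $w(t,\eta) = e^{t}\sinh(\eta)\,u(t,\eta)$, where $u(t,\eta) = (e^{t\overline{\Delta}_{\mathbf{SL}(2)}}f)(\eta)$. A direct computation using $v_{\eta\eta} = \sinh\eta\, u + 2\cosh\eta\, u_\eta + \sinh\eta\, u_{\eta\eta}$ with $v=\sinh(\eta)\,u$ gives
\[
\sinh(\eta)\bigl(u_{\eta\eta}+2\coth\eta\, u_\eta\bigr) = v_{\eta\eta}-v,
\]
so $v_t = v_{\eta\eta}-v$, and consequently $w = e^{t}v$ satisfies the one-dimensional Euclidean heat equation $w_t = w_{\eta\eta}$.

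Next, since $u(t,\cdot)$ is smooth at $\eta=0$ and the radial structure forces us to view $u$ as an even function of $\eta$ (extending to $\mathbb{R}$ by $u(t,-\eta)=u(t,\eta)$), the function $w$ is odd in $\eta$ with initial datum $w(0,\eta) = \sinh(\eta) f(|\eta|)$. I would then apply the standard Gaussian kernel on $\mathbb{R}$ to this odd initial condition:
\[
w(t,\eta) = \frac{1}{\sqrt{4\pi t}} \int_{-\infty}^{\infty} e^{-(\eta-r)^{2}/(4t)} \sinh(r)\, f(|r|)\, dr.
\]
Splitting the integral and using the odd extension yields the difference $e^{-(\eta-r)^{2}/(4t)} - e^{-(\eta+r)^{2}/(4t)} = 2\,e^{-(\eta^{2}+r^{2})/(4t)}\sinh\bigl(\eta r/(2t)\bigr)$ for $r>0$, so
\[
w(t,\eta) = \frac{1}{\sqrt{\pi t}} \int_{0}^{\infty} \sinh(r)\,\sinh\!\left(\frac{\eta r}{2t}\right) e^{-(r^{2}+\eta^{2})/(4t)} f(r)\, dr.
\]
Dividing by $e^{t}\sinh(\eta)$ recovers the stated formula for $(e^{t\overline{\Delta}_{\mathbf{SL}(2)}}f)(\eta)$.

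The main delicate point will be justifying the conjugation at the boundary $\eta = 0$ and confirming that the solution so constructed actually represents the semigroup rather than merely solving the PDE. For this I would verify that the right-hand side defines a solution of $\partial_t = \overline{\Delta}_{\mathbf{SL}(2)}$ which converges to $f$ as $t\downarrow 0$ (using the concentration of the Gaussian factor near $r=\eta$ together with $\sinh(\eta r/(2t))/\sinh(\eta)\sim r/\eta$ in the appropriate sense), and then invoke uniqueness in the class of sufficiently regular solutions with controlled growth, which is standard once the kernel is recognized as that of a self-adjoint diffusion on $[0,\infty)$ with the symmetric measure $\sinh^{2}\eta\, d\eta$.
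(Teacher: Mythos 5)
Your proof is correct, and it is essentially the paper's argument: both rest on the same intertwining identity, namely that multiplying by a hyperbolic-sine factor conjugates $\overline{\Delta}_{\mathbf{SL}(2)}$ into a flat Laplacian minus the constant $1$, which produces the prefactor $e^{-t}$ and reduces everything to a Gaussian kernel. The only difference is organizational: the paper conjugates by $h(\eta)=\sinh\eta/\eta$ to land on the radial Laplacian $\overline{\Delta}_{\mathbb{R}^3}=\partial_\eta^2+\tfrac{2}{\eta}\partial_\eta$ and then quotes the three-dimensional Gaussian heat kernel, whereas you conjugate by $\sinh\eta$ in one step and solve the resulting one-dimensional heat equation $w_t=w_{\eta\eta}$ by odd extension and the method of images. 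These are the same computation, since the radial $\mathbb{R}^3$ kernel is itself obtained by exactly that image argument; your route is marginally more self-contained, while the paper's keeps the geometric interpretation (hyperbolic $3$-space versus Euclidean $3$-space) visible. Your closing remarks on checking the initial condition and invoking uniqueness for the self-adjoint diffusion with symmetric measure $\sinh^2\eta\,d\eta$ address the one point the paper glosses over, so nothing is missing.
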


\begin{proof}
Let us take a function  $f$ which is smooth and compactly supported on $\R_{\ge 0}$. We observe that:
\[
\overline{\Delta}_{\mathbf{SL}(2)} f=\frac{1}{h} (\overline{\Delta}_{\R^3}-1)(hf),
\]
where
\[
\overline{\Delta}_{\R^3} =\frac{\partial^2}{\partial \eta ^2}+\frac{2}{  \eta} \frac{\partial}{\partial \eta} , \quad h(\eta)=\frac{\sinh \eta} {\eta}.
\]
As a consequence, we have
\[
(e^{t\overline{\Delta}_{\mathbf{SL}(2)}} f)(\eta)=\frac{e^{-t}}{h(\eta)}e^{t \overline{\Delta}_{\R^3}} (hf) (\eta).
\]
We are thus let with the computation of $e^{t \overline{\Delta}_{\R^3}}$. The operator $\overline{\Delta}_{\R^3}$ is the radial part of the Laplacian $\Delta_{\mathbb{R}^3}$, thus after a routine computation, for $x \in \mathbb{R}^3$,
\[
e^{t\overline{\Delta}_{\R^3}}f (\eta)=\frac{1}{\sqrt{\pi t}} \int_0^{+\infty} \frac{r}{\eta} \sinh \left( \frac{\eta r}{2t}\right) e^{-\frac{r^2+\eta^2}{4t}} f ( r ) dr.
\]
\end{proof}

As a consequence, we get the integral representation of $p_t$.
\begin{proposition}\label{prop-pt}
For $t>0$, $r\in[0,\pi/2)$, $ \eta\in[0,\pi]$, 
\begin{equation*}
p_t(r, \eta)=\frac{e^{-t}}{\sqrt{\pi t}} \int_0^{+\infty} \frac{ \sinh y \sin \left(  \frac{\eta y}{2t}\right) }{\sin \eta} e^{-\frac{y^2-\eta^2}{4t}} q_t( \cos r\cosh y ) dy.
\end{equation*}
\end{proposition}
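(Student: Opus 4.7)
The plan is to turn the formal identity (\ref{pt-qts}) into an explicit integral representation by feeding into it the formula for $e^{t\overline{\Delta}_{\mathbf{SL}(2)}}$ from the previous lemma, and then to interpret the evaluation at $-i\eta$ via analytic continuation. The justification for (\ref{pt-qts}) itself rests on the algebraic fact that the substitution $\eta \mapsto i\eta$ sends $\overline{\Delta}_{\mathbf{SL}(2)}$ to $-\overline{\Delta}_{\mathcal{V}}$: using $\cosh(i\eta)=\cos\eta$ and $\sinh(i\eta)=i\sin\eta$, one verifies that for any analytic $h$ and $w(\eta):=h(i\eta)$ the identity $(\overline{\Delta}_{\mathbf{SL}(2)} h)(i\eta) = -\overline{\Delta}_{\mathcal{V}} w(\eta)$ holds. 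Combined with the formal splitting $e^{t\Dh} = e^{-t\overline{\Delta}_{\mathcal{V}}} e^{t\Delta}$, which is rigorous here thanks to Theorem \ref{commutation2}, this identifies $p_t(r,\eta)$ with the value at $-i\eta$ of the hyperbolic heat semigroup applied to the analytic continuation $y \mapsto q_t(\cos r \cosh y)$ of the Riemannian heat kernel at the north pole.

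The next step is to plug $f(y) = q_t(\cos r \cosh y)$ into the explicit kernel of (\ref{semigroup}) and substitute $\eta \mapsto -i\eta$. Using $\sinh(-i\eta)=-i\sin\eta$ and $\sinh(-i\eta\, y/(2t))=-i\sin(\eta y/(2t))$, the two factors of $-i$ cancel between numerator and denominator to produce $\sin(\eta y/(2t))/\sin\eta$, while $(-i\eta)^2=-\eta^2$ turns the Gaussian exponent $e^{-(y^2+\eta^2)/(4t)}$ into $e^{-(y^2-\eta^2)/(4t)}$. This routine algebra produces exactly the announced formula.

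The main subtlety is not the algebraic manipulation but the legitimacy of the analytic continuation $\eta \mapsto -i\eta$. The sphere heat-kernel expansion (\ref{eq6}) implies that $y \mapsto q_t(\cos r \cosh y)$ is entire in $y$ with super-exponential decay, so the integral in (\ref{semigroup}) defines an entire function of $\eta$. The two sides of (\ref{pt-qts}) both satisfy the horizontal heat equation in $t$ and agree at $t=0$ on the real axis, so uniqueness of the heat flow on the appropriate class of entire functions of Gaussian growth transports the identity to $\eta$ on the imaginary axis, which closes the argument.
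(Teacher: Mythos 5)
Your proof is correct and follows the same route as the paper: combine the formal factorization $e^{t\Dh}=e^{-t\overline{\Delta}_{\mathcal V}}e^{t\Delta}$ (identity \eqref{pt-qts}) with the explicit kernel \eqref{semigroup} for $e^{t\overline{\Delta}_{\mathbf{SL}(2)}}$ and substitute $\eta\mapsto -i\eta$. You in fact supply more detail than the paper does for the step it dismisses as ``not hard to see,'' namely the verification that $\eta\mapsto i\eta$ intertwines $\overline{\Delta}_{\mathbf{SL}(2)}$ with $-\overline{\Delta}_{\mathcal V}$ and the justification of the analytic continuation of $y\mapsto q_t(\cos r\cosh y)$.
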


We refer to \cite{BW2} for applications of this formula to the computation of the small-time asymptotics of the kernel and, as a by-product, of the sub-Riemannian distance.

\section{Transverse Weitzenb\"ock formulas}

In this section we establish a Weitzenb\"ock formula for the horizontal Laplacian of a totally geodesic foliation.  As a consequence, we prove a generalized curvature dimension inequality for the horizontal Laplacian. In a joint program with Nicola Garofalo it has been proved in a very general and abstract framework that the generalized curvature dimension inequality implies several results:
\begin{itemize}
\item Li-Yau type gradient bounds for the heat kernel and associated scale invariant parabolic Harnack inequalities \cite{BG};
\item Upper and lower Gaussian bounds for the heat kernel \cite{BBG, BBGM,BG};
\item Boundedness of the Riesz transform \cite{BG2};
\item Sobolev embeddings and isoperimetric inequalities \cite{BG0,BK};
\item Log-Sobolev and transport inequalities \cite{BB}.
\item Bonnet-Myers type compactness theorem \cite{BG}.
\end{itemize}

We shall not discuss all of these applications here because it would go beyond the scope of these notes, but we will focus on the  Bonnet-Myers compactness result in a later section. In the next section, we will also say some words about the Li-Yau estimates since they are a crucial ingredient in the proof of the Bonnet-Myers type result. 

\

As a second application of the transverse Weitzenb\"ock formula we obtain sharp lower bounds for the first eigenvalue of the horizontal Laplacian.

\subsection{The Bott connection}

Let $\M$ be a smooth, connected  manifold with dimension $n+m$. We assume that $\bM$ is equipped with a Riemannian foliation $\mathcal{F}$ with bundle like metric $g$ and totally geodesic  $m$-dimensional leaves.

\

As usual, the sub-bundle $\mathcal{V}$ formed by vectors tangent to the leaves will be referred  to as the set of \emph{vertical directions} and the sub-bundle $\mathcal{H}$ which is normal to $\mathcal{V}$ will be referred to as the set of \emph{horizontal directions}.   The metric $g$ can be split as
\[
g=g_\mathcal{H} \oplus g_{\mathcal{V}},
\]
We define the canonical variation of $g$ as the one-parameter family of Riemannian metrics:
\[
g_{\varepsilon}=g_\mathcal{H} \oplus  \frac{1}{\varepsilon }g_{\mathcal{V}}, \quad \varepsilon >0.
\]

\

On the Riemannian manifold $(\M,g)$ there is the Levi-Civita connection that we denote by $D$, but this connection is not adapted to the study of follations because the horizontal and  the vertical bundle may not be parallel. More adapted to the geometry of the foliation is the Bott's connection that we now define. It is an easy exercise to check that there exists a unique affine connection $\nabla$ such that:

\begin{itemize}
\item $\nabla$ is metric, that is, $\nabla g =0$;
\item For $X,Y \in \Gamma^\infty(\Ho)$, $\nabla_X Y \in \Gamma^\infty(\Ho)$;
\item For $U,V \in \Gamma^\infty(\V)$, $\nabla_U V \in \Gamma^\infty(\V)$;
\item  For $X,Y \in \Gamma^\infty(\Ho)$, $T(X,Y) \in  \Gamma^\infty(\V)$ and  for $U,V \in \Gamma^\infty(\V)$, $T(U,V) \in  \Gamma^\infty(\Ho)$, where $T$ denotes the torsion tensor of $\nabla$;
\item For $X \in \Gamma^\infty(\Ho), U \in \Gamma^\infty(\V)$, $T(X,U)=0$.
\end{itemize}

In terms of the Levi-Civita connection,  the Bott connection writes

\[
\nabla_X Y =
\begin{cases}
 ( D_X Y)_{\mathcal{H}} , \quad X,Y \in \Gamma^\infty(\mathcal{H}) \\
 [X,Y]_{\mathcal{H}}, \quad X \in \Gamma^\infty(\mathcal{V}), Y \in \Gamma^\infty(\mathcal{H}) \\
 [X,Y]_{\mathcal{V}}, \quad X \in \Gamma^\infty(\mathcal{H}), Y \in \Gamma^\infty(\mathcal{V}) \\
 ( D_X Y)_{\mathcal{V}}, \quad X,Y \in \Gamma^\infty(\mathcal{V})
\end{cases}
\]
where the subscript $\Ho$  (resp. $\mathcal{V}$) denotes the projection on $\mathcal{H}$ (resp. $\mathcal{V}$). Observe that for horizontal vector fields $X,Y$ the torsion $T(X,Y)$ is given by
\[
T(X,Y)=-[X,Y]_\V.
\]
Also observe that for $X,Y \in \Gamma^\infty(\mathcal{V})$ we actually have  $( D_X Y)_{\mathcal{V}}= D_X Y$ because the leaves are assumed to be totally geodesic. Finally,  it is easy to check that for every $\varepsilon >0$, the Bott connection satisfies $\nabla g_\varepsilon=0$.

\begin{Example}
Let $(\M, \theta,g)$ be a K-contact Riemannian manifold. The Bott connection coincides  with the Tanno's connection that was introduced in \cite{Tanno} and which is the unique connection that satisfies:
\begin{enumerate}
\item $\nabla\theta=0$;
\item $\nabla T=0$;
\item $\nabla g=0$;
\item ${T}(X,Y)=d\theta(X,Y)T$ for any $X,Y\in \Gamma^\infty(\mathcal{H})$;
\item ${T}(T,X)=0$ for any vector field $X\in \Gamma^\infty(\mathcal{H})$.
\end{enumerate}
\end{Example}

\

We now introduce some tensors and definitions that will play an important role in the sequel.

\

For $Z \in \Gamma^\infty(T\M)$, there is a  unique skew-symmetric endomorphism  $J_Z:\mathcal{H}_x \to \mathcal{H}_x$ such that for all horizontal vector fields $X$ and $Y$,
\begin{align}\label{Jmap}
g_\mathcal{H} (J_Z (X),Y)= g_\mathcal{V} (Z,T(X,Y)).
\end{align}
where $T$ is the torsion tensor of $\nabla$. We then extend $J_{Z}$ to be 0 on  $\mathcal{V}_x$.  If $Z_1,\cdots,Z_m$ is a local vertical frame, the operator $\sum_{\ee=1}^m J_{Z_\ee}J_{Z_\ee}$ does not depend on the choice of the frame and shall concisely be denoted by $\mathbf{J}^2$. For instance, if $\M$ is a K-contact manifold equipped with the Reeb foliation, then $\mathbf{J}$ is an almost complex structure, $\mathbf{J}^2=-\mathbf{Id}_{\mathcal{H}}$.

\

 The horizontal divergence of the torsion $T$ is the $(1,1)$ tensor  which is defined in a local horizontal frame $X_1,\cdots,X_n$ by
\[
\delta_\mathcal{H} T (X)=- \sum_{j=1}^n(\nabla_{X_j} T) (X_j,X), \quad X \in \Gamma^\infty(\M).
\]
The $g$-adjoint of $\delta_\mathcal{H}T$ will be denoted $\delta_\mathcal{H} T^*$. 

\begin{definition}
We  say that the Riemannian foliation is of Yang-Mills type if $\delta_\mathcal{H} T=0$.
\end{definition}

\begin{Example}
Let $(\M, \theta,g)$ be a K-contact Riemannian manifold. It is easy to see that the Reeb foliation is of Yang-Mills type if and only if $\delta_\Ho d \theta=0$. Equivalently this condition writes $\delta_\Ho J =0$. If $\M$ is a strongly pseudo convex CR manifold with pseudo-Hermitian form $\theta$, then the Tanno's connection is the Tanaka-Webster connection. In that case, we have then $\nabla J=0$ (see \cite{Dragomir}) and thus  $\delta_\Ho J =0$. CR manifold of K-contact type are called Sasakian manifolds (see \cite{Dragomir}). Thus the Reeb foliation on any Sasakian manifold is of Yang-Mills type.
\end{Example}

\begin{Example}
Let $(\M,g)$ be a smooth Riemannian manifold.We endow the tangent bundle $T\M$ with the Sasaki metric so that the bundle projection $\pi: T\M \to \M$ is a Riemannian submersion with totally geodesic fibers. In that case  the torsion of the Bott connection is given by
\[
T(X,Y)=R(X,Y), \quad X,Y \in \Gamma( \Ho),
\]
where $R$ is the curvature of the connection form. By using the second Bianchi identity, the Yang-Mills condition  is equivalent to the fact that the Ricci tensor of the connection form is a Codazzi tensor, that is for any vector fields $X,Y,Z$ in $ \Gamma^\infty ( \Ho)$,
\[
(\nabla_X \text{Ric}) (Y, Z)=(\nabla_Y \text{Ric}) (X, Z).
\]
\end{Example}

\

In the sequel, we shall need to perform computations on one-forms. For that purpose we introduce some definitions and notations on the cotangent bundle.

We say that a one-form to be horizontal (resp. vertical) if it vanishes on the vertical bundle $\mathcal{V}$ (resp. on the horizontal bundle $\mathcal{H}$). We thus have a splitting of the cotangent space
 \[
 T^*_x \bM= \mathcal{H}^*(x) \oplus \mathcal{V}^*(x)
 \]

 The metric $g_\varepsilon$ induces  then a metric on the cotangent bundle which we still denote $g_\varepsilon$. By using similar notations and conventions as before we have for every $\eta$ in $T^*_x \M$,
\[
\| \eta \|^2_{\varepsilon} =\| \eta \|_\mathcal{H}^2+\varepsilon \| \eta \|_\mathcal{V}^2.
\]

\

By using the duality given by the metric $g$, $(1,1)$ tensors can also be seen as linear maps on the cotangent bundle $T^* \M$. More precisely, if $A$ is a $(1,1)$ tensor, we will still denote by $A$ the fiberwise linear map on the cotangent bundle which is defined as the $g$-adjoint of the dual map of $A$. The same convention will be made for any $(r,s)$ tensor.

\

We define then the horizontal Ricci curvature $\mathfrak{Ric}_{\mathcal{H}}$ as the fiberwise symmetric linear map on one-forms such that for every smooth functions $f,g$,
\[
\langle  \mathfrak{Ric}_{\mathcal{H}} (df), dg \rangle=\mathbf{Ricci} (\nabla_\mathcal{H} f ,\nabla_\mathcal{H} g),
\]
where $\mathbf{Ricci}$ is the Ricci curvature of the connection $\nabla$.

\

A simple computation (see for instance Theorem 9.70, Chapter 9 in \cite{Besse}) gives the following result for the Riemannian Ricci curvature of the metric $g_\varepsilon$.

\begin{lemma}\label{Ric}
Assume that the foliation is of Yang-Mills type. Let us denote by $\mathbf{Ricci}_\varepsilon$ the Ricci curvature tensor of the Levi-Civita connection of the metric $g_\varepsilon$ and by $\mathbf{Ricci}_\mathcal{V} $ the Ricci curvature of the leaves, then for every $X \in \Gamma^\infty(\mathcal{H})$ and $Z \in \Gamma^\infty(\mathcal{V})$,
\[
\mathbf{Ricci}_\varepsilon (Z,Z)=\mathbf{Ricci}_\mathcal{V} (Z,Z)+\frac{1}{4\varepsilon^2}\mathbf{Tr} ( J_Z^* J_Z)
\]
\[
\mathbf{Ricci}_\varepsilon (X,Z)=0
\]
\[
\mathbf{Ricci}_\varepsilon (X,X)=\mathbf{Ricci}_\mathcal{H} (X,X)-\frac{1}{2\varepsilon} \| \mathbf{J} X \|^2.
\]
\end{lemma}

We explicitly note that $\mathbf{Ricci}_\varepsilon (X,Z)=0$ is due to the fact that the foliation is assumed to be of Yang-Mills type.

\

If $V$ is a horizontal vector field and $\varepsilon >0$, we consider the fiberwise linear map from the space of one-forms into itself which is given for $\eta \in \Gamma^\infty(T^* \M)$ and $Y \in  \Gamma^\infty(T \M)$ by
\[
\mathfrak{T}^\varepsilon_V \eta (Y) =
\begin{cases}
\frac{1}{\varepsilon} \eta (J_Y V), \quad Y \in \Gamma^\infty(\mathcal{V}) \\
-\eta (T(V,Y)), Y  \in \Gamma^\infty(\mathcal{H})
\end{cases}
\]
We observe that $\mathfrak{T}^\varepsilon_V$ is skew-symmetric for the metric $g_\varepsilon$ so that $\nabla -\mathfrak{T}^\varepsilon$ is a $g_\varepsilon$-metric connection.

If $\eta$ is a one-form, we define the horizontal gradient  of $\eta$ in a local frame  as the $(0,2)$ tensor
\[
\nabla_\mathcal{H} \eta =\sum_{i=1}^n \nabla_{X_i} \eta \otimes \theta_i.
\]
We denote by $\nabla_\ch^\# \eta$ the symmetrization of $\nabla_\mathcal{H} \eta $.

\

Similarly, we will use the notation
\[
\mathfrak{T}^\varepsilon_\mathcal{H} \eta =\sum_{i=1}^n \mathfrak{T}^\varepsilon_{X_i} \eta  \otimes \theta_i.
\]

\

Finally, we will still denote by $\Dh$ the covariant extension on one-forms of the horizontal Laplacian. In a local horizontal frame, we have thus
\[
\Dh=-\nabla_\Ho^* \nabla_\Ho=\sum_{i=1}^n \nabla_{X_i}\nabla_{X_i} -\nabla_{\nabla_{X_i} X_i}.
\]

\subsection{Bochner-Weitzenb\"ock formulas for the horizontal Laplacian}

For $\varepsilon >0$, we consider the following operator which is defined on one-forms by
\[
\square_\varepsilon=-(\nabla_\mathcal{H} -\mathfrak{T}_\mathcal{H}^\varepsilon)^* (\nabla_\mathcal{H} -\mathfrak{T}_\mathcal{H}^\varepsilon)-\frac{1}{ \varepsilon}\mathbf{J}^2+\frac{1}{\varepsilon} \delta_\mathcal{H} T- \mathfrak{Ric}_{\mathcal{H}},
\]
where the adjoint is understood with respect to the metric $g_{\varepsilon}$. It is easily seen that, in a local horizontal frame,
\begin{align}\label{eq-L-form}
-(\nabla_\mathcal{H} -\mathfrak{T}_\mathcal{H}^\varepsilon)^* (\nabla_\mathcal{H} -\mathfrak{T}_\mathcal{H}^\varepsilon)
=\sum_{i=1}^n (\nabla_{X_i} -\mathfrak{T}^\varepsilon_{X_i})^2 - ( \nabla_{\nabla_{X_i} X_i}-  \mathfrak{T}^\varepsilon_{\nabla_{X_i} X_i}),
\end{align}
Observe that if the foliation is of Yang-Mills type then 
\[
\square_\varepsilon=-(\nabla_\mathcal{H} -\mathfrak{T}_\mathcal{H}^\varepsilon)^* (\nabla_\mathcal{H} -\mathfrak{T}_\mathcal{H}^\varepsilon)-\frac{1}{ \varepsilon}\mathbf{J}^2- \mathfrak{Ric}_{\mathcal{H}}.
\]
As a consequence, in the Yang-Mills case the operator $\square_\varepsilon$ is seen to be symmetric for the metric $g_\varepsilon$.

The following theorem that was proved in \cite{BKW} is the main result of the section:

\begin{theorem} \label{Bochner}
For every $f \in C^\infty(\M)$, we have
\[
d \Dh f=\square_\varepsilon df.
\]
\end{theorem}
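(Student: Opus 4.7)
My plan is to verify the identity pointwise at an arbitrary $x \in \M$, testing both sides against a tangent vector $Y$ and treating $Y \in \Ho_x$ and $Y \in \V_x$ separately. I would fix a local orthonormal frame $X_1,\ldots,X_n$ of $\Ho$ and $Z_1,\ldots,Z_m$ of $\V$ obtained by $\nabla$-parallel transport along radial geodesics from $x$; because the Bott connection is metric and preserves the splitting $T\M=\Ho\oplus\V$, such a frame satisfies $\nabla_{\cdot}X_i|_x = \nabla_{\cdot}Z_\ell|_x = 0$. Under this normalization $\Dh f|_x = \sum_i X_i^2 f$, and the Christoffel-type contributions from $\nabla_{X_i}X_i$ drop out at $x$, keeping both sides manageable.

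For the left-hand side, I would compute $Y(\Dh f)|_x$ by commuting $Y$ past the two horizontal derivatives via the Ricci identity for the Bott connection,
\[
\nabla_A\nabla_B\omega - \nabla_B\nabla_A\omega - \nabla_{\nabla_A B - \nabla_B A - T(A,B)}\omega = R(A,B)\omega,
\]
applied to $\omega=df$. This produces $\sum_i(\nabla_{X_i}^2 df)(Y)$ plus curvature and torsion correction terms. For the right-hand side, at $x$ in the same frame
\[
-(\nabla_\ch-\mathfrak{T}^\varepsilon_\ch)^*(\nabla_\ch-\mathfrak{T}^\varepsilon_\ch) = \sum_i\bigl(\nabla_{X_i}-\mathfrak{T}^\varepsilon_{X_i}\bigr)^2 = \sum_i\nabla_{X_i}^2 - \nabla_{X_i}\mathfrak{T}^\varepsilon_{X_i} - \mathfrak{T}^\varepsilon_{X_i}\nabla_{X_i} + \bigl(\mathfrak{T}^\varepsilon_{X_i}\bigr)^2,
\]
and adding the explicit correction $-\varepsilon^{-1}\mathbf{J}^2 + \varepsilon^{-1}\delta_\ch T - \mathfrak{Ric}_\ch$ gives $\square_\varepsilon df$. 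The two principal pieces $\sum_i(\nabla_{X_i}^2 df)(Y)$ cancel against each other, so the proof reduces to matching the correction terms.

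The identification of the corrections is the content and splits by the type of $Y$. For $Y\in\Ho$, the curvature residue from the Ricci identity on the LHS is exactly the horizontal Ricci term $\mathfrak{Ric}_\ch(df)(Y)$, which cancels the corresponding term on the RHS; the torsion $T(X_i,Y)$ produced by the commutator $[X_i,Y]$ is vertical and pairs with $df$ via $\mathfrak{T}^\varepsilon_{X_i} df(Y) = -df(T(X_i,Y))$, which is $\varepsilon$-independent and matches term-by-term. For $Y\in\V$, the LHS picks up contributions of the form $\nabla_{T(Y,X_i)}df$, which via the defining relation $g_\ch(J_Z X, X') = g_\V(Z,T(X,X'))$ reassemble into $-\varepsilon^{-1}\mathbf{J}^2 df(Y)$, while derivatives of the torsion collect into $\varepsilon^{-1}\delta_\ch T df(Y)$. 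The $\varepsilon^{-1}$ factors on the RHS are precisely what is produced by $\mathfrak{T}^\varepsilon_V\eta(Y)=\varepsilon^{-1}\eta(J_Y V)$ for vertical $Y$, and they balance the $\varepsilon$-weighting introduced by the $g_\varepsilon$-adjoint so that the combined expression is $\varepsilon$-independent, as the LHS requires.

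The main obstacle is the careful bookkeeping of signs, $\varepsilon$-weights, and horizontal versus vertical projections; the structural reason the twisted connection $\nabla-\mathfrak{T}^\varepsilon$ is the right object is that its self-commutators absorb the torsion contributions of the Ricci identity, leaving only the curvature-type residue $\mathfrak{Ric}_\ch + \varepsilon^{-1}\mathbf{J}^2 - \varepsilon^{-1}\delta_\ch T$. Once both sides have been brought to a common expression in the normal frame at $x$, tensoriality in $Y$ and in the second derivatives of $f$ extends the identity to all of $\M$.
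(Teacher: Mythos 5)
Your overall strategy --- expand both sides in a $\nabla$-normal adapted frame at a point, commute derivatives with the Ricci identity for the Bott connection, and match the curvature/torsion residues against the explicit corrections in $\square_\varepsilon$ --- is the same brute-force frame computation that the paper itself only sketches (deferring the details to \cite{BKW}); the paper merely organizes it differently, first proving the operator identity
\[
-(\nabla_\mathcal{H}-\mathfrak{T}^\varepsilon_\mathcal{H})^*(\nabla_\mathcal{H}-\mathfrak{T}^\varepsilon_\mathcal{H})=\Dh+2\J-\tfrac{2}{\ep}\mathcal{T}\circ d+\delta_\ch T^*-\tfrac{1}{\ep}\delta_\ch T+\tfrac{1}{\ep}\mathbf{J}^2
\]
on \emph{all} one-forms, so that on the closed form $\eta=df$ every $\varepsilon$-dependent term drops out and only the $\varepsilon$-free commutation $d\Dh f=(\Dh+2\J-\Ric_\ch+\delta_\ch T^*)df$ remains to be checked. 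That reduction is worth adopting: it isolates where the $\varepsilon$'s must cancel before any Ricci-identity bookkeeping begins.

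However, your preview of how the terms match up contains concrete errors that would derail the computation if followed literally. First, for $Y\in\V$ the torsion $T(Y,X_i)$ vanishes identically --- this is one of the defining properties of the Bott connection ($T(X,U)=0$ for $X$ horizontal, $U$ vertical) --- so the claimed contributions $\nabla_{T(Y,X_i)}df$ do not exist. Second, $\mathbf{J}^2$ is built from the maps $J_Z$, which are extended by zero on $\V$ and map $\Ho$ to $\Ho$; hence $\mathbf{J}^2 df$ is a \emph{horizontal} one-form and $\mathbf{J}^2 df(Y)=0$ for every vertical $Y$. The $\tfrac{1}{\ep}\mathbf{J}^2$ term actually arises from $\sum_i(\mathfrak{T}^\varepsilon_{X_i})^2$ tested against a \emph{horizontal} $Y$: one finds $\sum_i(\mathfrak{T}^\varepsilon_{X_i})^2\eta(Y)=-\tfrac{1}{\ep}\sum_i\eta(J_{T(X_i,Y)}X_i)=\tfrac{1}{\ep}(\mathbf{J}^2\eta)(Y)$, and this is what cancels the explicit $-\tfrac{1}{\ep}\mathbf{J}^2$ in $\square_\varepsilon$. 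Symmetrically, $\delta_\ch T$ (viewed on one-forms) is a vertical one-form, so the $\tfrac{1}{\ep}\delta_\ch T$ correction only enters when testing against vertical $Y$, where it must cancel the $\tfrac{1}{\ep}$ contributions coming from the cross terms $\nabla_{X_i}\mathfrak{T}^\varepsilon_{X_i}+\mathfrak{T}^\varepsilon_{X_i}\nabla_{X_i}$ (note these are $O(1/\ep)$ on vertical arguments, not $O(1/\ep^2)$, since one factor of $\mathfrak{T}^\varepsilon$ lands on a horizontal argument). So the correct cancellation pattern is: on horizontal directions the only $\varepsilon$-dependence is the $\mathbf{J}^2$ pair, which cancels; on vertical directions all the $\tfrac{1}{\ep}$ terms cancel among themselves, leaving the $\varepsilon$-independent quantity that must equal $d\Dh f(Y)$. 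With the bookkeeping reassigned in this way your plan goes through, but as written the vertical-direction step would fail.
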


\begin{proof}
We only sketch the proof and refer to \cite{BKW} for the details. If $Z_1,\cdots,Z_m$ is a local vertical frame of the leaves, we denote
\[
\J(\eta)=-\sum_{\ee=1}^mJ_{Z_\ee}(\iota_{Z_\ee}d\eta_\V),
\]
 where $\eta_\V$ is the the projection of $\eta$ to the vertical cotangent bundle. It does not depend on the choice of the frame and therefore defines a globally defined tensor.
Also, let us consider the map $\mathcal{T} \colon \Gamma^\infty(\wedge^2 T^*\M)\to \Gamma^\infty( T^*\M)$ which is given in a  local coframe $\theta_i \in \Gamma^\infty(\mathcal{H}^*)$, $\nu_k \in \Gamma^\infty(\mathcal{V}^*)$
\[
\mathcal{T}(\theta_i\wedge\theta_j)=-\gamma_{ij}^\ee\nu_\ee,\quad \mathcal{T}(\theta_i\wedge\nu_k)=\mathcal{T}(\nu_k\wedge\nu_\ee)=0.
\]
A direct computation shows then that
\[
 -(\nabla_\mathcal{H} -\mathfrak{T}_\mathcal{H}^\varepsilon)^* (\nabla_\mathcal{H} -\mathfrak{T}_\mathcal{H}^\varepsilon)
=
\Dh +2\J-\frac{2}{\ep}\mathcal{T}\circ d+\delta_\ch T^*-\frac{1}{\ep}\delta_\ch T+\frac{1}{\ep}\mathbf{J}^2.
\]
Thus, we just need to prove that if  $\square_\infty$ is the operator defined  on one-forms by
\[
\square_\infty=\Dh+2\J-\Ric_\ch+\delta_\mathcal{H} T^* ,
\]
then for any $f\in C^\infty(\M)$,
\[
 d\Dh f=\square_\infty df.
\]
A computation in local frame shows that
\begin{align*}
 d\Dh f - \Dh df =  2\J(df) -\Ric_\ch(df) +\delta_\mathcal{H} T^* (df),
\end{align*}
which completes the proof.

\end{proof}

We now state the following Bochner's type identity.

\begin{theorem}\label{Bochner2}
For any   $\eta \in \Gamma^\infty(T^* \M)$,
\[
\frac{1}{2} \Dh \| \eta \|_{\varepsilon}^2 -\langle \square_\varepsilon \eta , \eta \rangle_{\varepsilon} =  \| \nabla_{\mathcal{H}} \eta  -\mathfrak{T}^\varepsilon_{\mathcal{H}} \eta \|_{\varepsilon}^2 + \left\langle \mathfrak{Ric}_{\mathcal{H}} (\eta), \eta \right\rangle_\mathcal{H} -\left \langle \delta_\mathcal{H} T (\eta) , \eta \right\rangle_\mathcal{V} +\frac{1}{\varepsilon} \langle \mathbf{J}^2 (\eta) , \eta \rangle_\mathcal{H}.
\]
\end{theorem}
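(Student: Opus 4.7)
The strategy is to derive the identity by the classical Bochner--Weitzenb\"ock route adapted to the $g_\varepsilon$-metric connection $\nabla-\mathfrak{T}^\varepsilon$: compute $\tfrac{1}{2}\Dh\|\eta\|_\varepsilon^2$ directly from metric compatibility, substitute the definition of $\square_\varepsilon$ to introduce the curvature and torsion terms, and finally convert the $g_\varepsilon$-pairings that appear into the horizontal and vertical pairings on the right-hand side.

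First I would fix a point $x\in\M$ and work in a local orthonormal basic frame $X_1,\dots,X_n$ of $\mathcal{H}$ that is normal at $x$. Since $\nabla g_\varepsilon=0$ and $\mathfrak{T}^\varepsilon_V$ is $g_\varepsilon$-skew (both already noted in the text), the connection $\nabla-\mathfrak{T}^\varepsilon$ is $g_\varepsilon$-metric on $T^*\M$, so that
\[
X_i\|\eta\|_\varepsilon^2 = 2\langle (\nabla_{X_i}-\mathfrak{T}^\varepsilon_{X_i})\eta,\eta\rangle_\varepsilon.
\]
Differentiating once more, applying Leibniz, summing over $i$ and subtracting the $\nabla_{X_i}X_i$ terms, one uses identity (4.1) to collect
\[
\frac{1}{2}\Dh\|\eta\|_\varepsilon^2 = \big\langle -(\nabla_\mathcal{H}-\mathfrak{T}^\varepsilon_\mathcal{H})^*(\nabla_\mathcal{H}-\mathfrak{T}^\varepsilon_\mathcal{H})\eta,\eta\big\rangle_\varepsilon + \|\nabla_\mathcal{H}\eta-\mathfrak{T}^\varepsilon_\mathcal{H}\eta\|_\varepsilon^2.
\]

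Next I would substitute the definition of $\square_\varepsilon$, rewritten as
\[
-(\nabla_\mathcal{H}-\mathfrak{T}^\varepsilon_\mathcal{H})^*(\nabla_\mathcal{H}-\mathfrak{T}^\varepsilon_\mathcal{H}) = \square_\varepsilon + \frac{1}{\varepsilon}\mathbf{J}^2 - \frac{1}{\varepsilon}\delta_\mathcal{H} T + \mathfrak{Ric}_\mathcal{H},
\]
and then convert the remaining $g_\varepsilon$-pairings. Because $\mathbf{J}^2$ and $\mathfrak{Ric}_\mathcal{H}$ annihilate $\mathcal{V}^*$ and take values in $\mathcal{H}^*$ (the former by the extension $J_Z|_\mathcal{V}=0$, the latter by construction), their $g_\varepsilon$-pairings with $\eta$ coincide with the $g_\mathcal{H}$-pairings. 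For $\delta_\mathcal{H} T$, since the Bott connection preserves the splitting $T\M=\mathcal{H}\oplus\mathcal{V}$ and $T(X,U)=0$ for $X\in\Gamma^\infty(\mathcal{H})$, $U\in\Gamma^\infty(\mathcal{V})$, a direct check shows $\delta_\mathcal{H} T$ kills vertical one-forms and sends horizontal ones to vertical ones; since the dual metric $g_\varepsilon$ scales $\mathcal{V}^*$ by $\varepsilon$, one gets $\langle\delta_\mathcal{H} T\eta,\eta\rangle_\varepsilon = \varepsilon\langle\delta_\mathcal{H} T\eta,\eta\rangle_\mathcal{V}$, and the prefactor $1/\varepsilon$ is absorbed, yielding the stated identity.

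The only real obstacle is this last bookkeeping step: one has to be careful that the $\varepsilon$-scaling of $g_\varepsilon$ on cotangent vectors is inverse to its scaling on tangent vectors, and that $\delta_\mathcal{H} T$ has exactly the $\mathcal{H}^*\to\mathcal{V}^*$ structure required to produce the single factor of $\varepsilon$ that cancels the $1/\varepsilon$ inside $\square_\varepsilon$. Everything else is a direct consequence of the $g_\varepsilon$-metric compatibility of $\nabla-\mathfrak{T}^\varepsilon$ and the algebraic definition of $\square_\varepsilon$.
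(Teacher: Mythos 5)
Your proposal is correct, and in substance it reproduces the paper's argument: both proofs reduce everything to the single identity
\[
\left\langle (\nabla_\mathcal{H} -\mathfrak{T}_\mathcal{H}^\varepsilon)^* (\nabla_\mathcal{H} -\mathfrak{T}_\mathcal{H}^\varepsilon) \eta , \eta \right\rangle_\varepsilon
= \| \nabla_{\mathcal{H}} \eta  -\mathfrak{T}^\varepsilon_{\mathcal{H}} \eta \|_{\varepsilon}^2 -\tfrac{1}{2}  \Dh \| \eta \|_{\varepsilon}^2 ,
\]
followed by substituting the definition of $\square_\varepsilon$ and converting the $g_\varepsilon$-pairings into the $\mathcal{H}$- and $\mathcal{V}$-pairings exactly as you describe (including the $\varepsilon$ versus $1/\varepsilon$ cancellation coming from the $\mathcal{H}^*\to\mathcal{V}^*$ structure of $\delta_\mathcal{H}T$, which is the right point to flag). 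The only difference is how this identity is obtained: the paper multiplies by an arbitrary $g\in C_0^\infty(\M)$, integrates over $\M$, unwinds the adjoint by integration by parts, and then strips off $g$; you derive it pointwise in a normal frame from the $g_\varepsilon$-metric compatibility of $\nabla-\mathfrak{T}^\varepsilon$ together with the local-frame expression for $-(\nabla_\mathcal{H}-\mathfrak{T}_\mathcal{H}^\varepsilon)^*(\nabla_\mathcal{H}-\mathfrak{T}_\mathcal{H}^\varepsilon)$. Your route is slightly more direct and avoids the (purely cosmetic) detour through the weak formulation; the paper's route avoids having to differentiate twice in a normal frame. Either is acceptable.
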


\begin{proof}
From the very definition of $\square_\varepsilon$, we have
\[
-\langle \square_\varepsilon \eta , \eta \rangle_\varepsilon=\langle (\nabla_\mathcal{H} -\mathfrak{T}_\mathcal{H}^\varepsilon)^* (\nabla_\mathcal{H} -\mathfrak{T}_\mathcal{H}^\varepsilon) \eta,  , \eta \rangle_\varepsilon+ \left\langle \mathfrak{Ric}_{\mathcal{H}} (\eta), \eta \right\rangle_\mathcal{H} -\left \langle \delta_\mathcal{H} T (\eta) , \eta \right\rangle_\mathcal{V} +\frac{1}{\varepsilon} \langle \mathbf{J}^2 (\eta) , \eta \rangle_\mathcal{H},
\]
The idea is now to multiply this by any $g \in C_0^\infty(\M)$ and integrate over $\M$. For that, observe that
\begin{align*}
\int_\M g \langle (\nabla_\mathcal{H} -\mathfrak{T}_\mathcal{H}^\varepsilon)^* (\nabla_\mathcal{H} -\mathfrak{T}_\mathcal{H}^\varepsilon) \eta,  , \eta \rangle_\varepsilon d\mu&
= \int_\M  \langle (\nabla_\mathcal{H} -\mathfrak{T}_\mathcal{H}^\varepsilon) \eta,  ,  (\nabla_\mathcal{H} -\mathfrak{T}_\mathcal{H}^\varepsilon) (g \eta) \rangle_\varepsilon d\mu.
\end{align*}
We have now
\[
 (\nabla_\mathcal{H} -\mathfrak{T}_\mathcal{H}^\varepsilon) (g \eta)=g(\nabla_\mathcal{H} -\mathfrak{T}_\mathcal{H}^\varepsilon) ( \eta) +\eta \otimes \nabla_\mathcal{H} g 
 \]
 and 
 \begin{align*}
 \int_\M  \langle (\nabla_\mathcal{H} -\mathfrak{T}_\mathcal{H}^\varepsilon) \eta,  ,  \eta \otimes \nabla_\mathcal{H} g  \rangle_\varepsilon d\mu &=\int_\M  \langle \nabla_\mathcal{H} \eta,  ,  \eta \otimes \nabla_\mathcal{H} g  \rangle_\varepsilon d\mu \\
  &=\frac{1}{2} \int_\M  \langle \nabla_\mathcal{H} g,  ,  \nabla_\mathcal{H} \| \eta \|^2 \rangle_\varepsilon d\mu.
 \end{align*}
 Putting things together we deduce that
 \[
 \int_\M g \langle (\nabla_\mathcal{H} -\mathfrak{T}_\mathcal{H}^\varepsilon)^* (\nabla_\mathcal{H} -\mathfrak{T}_\mathcal{H}^\varepsilon) \eta,  , \eta \rangle_\varepsilon d\mu
 =\int_\M g  \| \nabla_{\mathcal{H}} \eta  -\mathfrak{T}^\varepsilon_{\mathcal{H}} \eta \|_{\varepsilon}^2 d\mu-\frac{1}{2}  \int_\M g \Dh \| \eta \|_{\varepsilon}^2 d\mu.
 \]
 Since it is true for every $g$, we deduce
 \[
 \langle (\nabla_\mathcal{H} -\mathfrak{T}_\mathcal{H}^\varepsilon)^* (\nabla_\mathcal{H} -\mathfrak{T}_\mathcal{H}^\varepsilon) \eta,  , \eta \rangle_\varepsilon=
 \| \nabla_{\mathcal{H}} \eta  -\mathfrak{T}^\varepsilon_{\mathcal{H}} \eta \|_{\varepsilon}^2 -\frac{1}{2}  \Dh \| \eta \|_{\varepsilon}^2.
 \]
\end{proof}

Let us observe that if $\eta=df$ for some $f \in C^\infty(\M)$, than an easy computation shows that
\[
 \| \nabla_{\mathcal{H}} \eta  -\mathfrak{T}^\varepsilon_{\mathcal{H}} \eta \|_{\varepsilon}^2
   = \| \nabla^\#_{\mathcal{H}} \eta  \|_{\varepsilon}^2
  -\frac{1}{4}\mathbf{Tr}_\mathcal{H} (J^2_{\eta})
  +\ep \| \nabla_{\mathcal{H}} \eta  -\mathfrak{T}^\varepsilon_{\mathcal{H}} \eta \|_{\mathcal{V}}^2 ,
  \]
  thus by Cauchy-Schwarz inequality we have,
\begin{align}\label{bochner3}
 & \frac{1}{2} \Dh \| \eta \|_{\varepsilon}^2 -\langle \square_\varepsilon \eta , \eta \rangle_{\varepsilon}  \\
   \ge & \frac{1}{n}\left( \mathbf{Tr}_\mathcal{H}  \nabla_\ch^\# \eta \right)^2 -\frac{1}{4} \mathbf{Tr}_\mathcal{H} (J^2_{\eta})+ \left\langle \mathfrak{Ric}_{\mathcal{H}} (\eta), \eta \right\rangle_\mathcal{H} -\left \langle \delta_\mathcal{H} T (\eta) , \eta \right\rangle_\mathcal{V} +\frac{1}{\varepsilon} \langle \mathbf{J}^2 (\eta) , \eta \rangle_\mathcal{H} \notag
\end{align}

\subsection{Generalized curvature dimension inequality}

Let $\M$ be a smooth, connected  manifold with dimension $n+m$. We assume that $\bM$ is equipped with a Riemannian foliation $\mathcal{F}$ with bundle like metric $g$ and totally geodesic  $m$-dimensional leaves for which the horizontal distribution is Yang-Mills. We also assume that $\M$ is complete and that globally on $\M$, for every $\eta_1 \in \Gamma^\infty(\mathcal{H}^*)$ and $\eta_2 \in \Gamma^\infty(\mathcal{V}^*)$,
\[
\langle \mathfrak{Ric}_{\mathcal{H}} (\eta_1), \eta_1 \rangle_{\mathcal{H}} \ge  \rho_1 \| \eta_1 \|^2_{\mathcal{H}}, \quad -\langle \mathbf{J}^2 \eta_1, \eta_1  \rangle_\mathcal{H} \le \kappa  \| \eta_1 \|^2_\mathcal{H},\quad -\frac{1}{4} \mathbf{Tr}_\mathcal{H} (J^2_{\eta_2})\ge \rho_2 \| \eta_2 \|^2_\mathcal{V},
\]
for some $\rho_1 \in \mathbb{R}$, $\kappa,\rho_2 >0$. The third assumption can be thought as a uniform bracket generating condition of the horizontal distribution $\mathcal{H}$ and from H\"ormander's theorem, it implies that the horizontal Laplacian $\Dh$ is a subelliptic diffusion operator. We insist that for the following results below to be true, the positivity of $\rho_2$ is required. 

\

We introduce the following operators defined for $f,g \in C^\infty(\M)$,
\[
\Gamma(f,g)=\frac{1}{2} ( \Dh (fg) -g\Dh f-f\Dh g)=\langle \nabla_\mathcal{H} f , \nabla_\mathcal{H} g\rangle_\mathcal{H}
\]

\[
\Gamma^\mathcal{V} (f,g)=\langle \nabla_\mathcal{V} f , \nabla_\mathcal{V} g\rangle_\mathcal{V}
\]
and their iterations which are defined by
\[
\Gamma_2(f,g)=\frac{1}{2} ( \Dh(\Gamma(f,g)) -\Gamma(g,\Dh f)-\Gamma(f,\Dh g))
\]
\[
\Gamma^\mathcal{V}_2(f,g)=\frac{1}{2} ( \Dh (\Gamma^\mathcal{V}(f,g)) -\Gamma^\mathcal{V}(g,\Dh f)-\Gamma^\mathcal{V}(f,\Dh g))
\]
As a consequence of Theorem \ref{Bochner}, we obtain the curvature dimension inequality introduced with Nicola Garofalo in \cite{BG}.
\begin{theorem}\label{CD}
For every $f,g \in C^\infty(\M)$, and $\varepsilon >0$,
\[
\Gamma_2(f,f)+\varepsilon \Gamma^\mathcal{V}_2(f,f)\ge \frac{1}{n} (\Dh f)^2 +\left( \rho_1 -\frac{\kappa}{\varepsilon}\right) \Gamma(f,f)+\rho_2 \Gamma^\mathcal{V} (f,f),
\]
and
\[
\Gamma (f, \Gamma^\mathcal{V} (f))=\Gamma^\mathcal{V}  (f, \Gamma (f)).
\]
\end{theorem}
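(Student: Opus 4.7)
The plan is to apply the strengthened Bochner identity \eqref{bochner3} to the exact one-form $\eta=df$ and read off the inequality term by term. First, from the definition of $g_\varepsilon$ on the cotangent bundle one has $\|df\|_\varepsilon^2=\Gamma(f,f)+\varepsilon\,\Gamma^\mathcal{V}(f,f)$, and by Theorem \ref{Bochner} together with $\square_\varepsilon df=d\Dh f$ one computes $\langle\square_\varepsilon df,df\rangle_\varepsilon=\Gamma(f,\Dh f)+\varepsilon\,\Gamma^\mathcal{V}(f,\Dh f)$. Consequently the left-hand side of \eqref{bochner3} is, by the very definitions of $\Gamma_2$ and $\Gamma_2^\mathcal{V}$, exactly $\Gamma_2(f,f)+\varepsilon\,\Gamma_2^\mathcal{V}(f,f)$.

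Next I would analyse each term on the right-hand side of \eqref{bochner3}. In a local horizontal frame the symmetrized horizontal Hessian has trace $\mathbf{Tr}_\mathcal{H}\nabla_\mathcal{H}^\# df=\Dh f$, which accounts for the $\tfrac{1}{n}(\Dh f)^2$ contribution. The Yang-Mills hypothesis $\delta_\mathcal{H}T=0$ eliminates the term $\langle\delta_\mathcal{H}T(df),df\rangle_\mathcal{V}$. Since $J_Z$ depends only on the vertical part of $Z$, one has $J_{df}=J_{(df)_\mathcal{V}}$, so the third curvature hypothesis yields $-\tfrac14\mathbf{Tr}_\mathcal{H}(J^2_{df})\ge\rho_2\,\Gamma^\mathcal{V}(f,f)$. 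Since $\mathbf{J}^2$ annihilates the vertical part of any one-form, $\langle\mathbf{J}^2 df,df\rangle_\mathcal{H}=\langle\mathbf{J}^2(df)_\mathcal{H},(df)_\mathcal{H}\rangle_\mathcal{H}\ge-\kappa\,\Gamma(f,f)$, giving the $-\kappa/\varepsilon$ term. Finally the Ricci lower bound produces $\langle\mathfrak{Ric}_\mathcal{H}(df),df\rangle_\mathcal{H}\ge\rho_1\,\Gamma(f,f)$. Summing these contributions gives precisely the claimed inequality.

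The commutation identity $\Gamma(f,\Gamma^\mathcal{V}(f))=\Gamma^\mathcal{V}(f,\Gamma(f))$ is an immediate rewriting of Theorem \ref{inter_commutation}: unwinding the definitions, the two sides are respectively $\langle\nabla_\mathcal{H}f,\nabla_\mathcal{H}\|\nabla_\mathcal{V}f\|^2\rangle$ and $\langle\nabla_\mathcal{V}f,\nabla_\mathcal{V}\|\nabla_\mathcal{H}f\|^2\rangle$, and the latter equality is exactly the intertwining characterization of totally geodesic fibers established earlier, which (being a purely local statement) carries over verbatim to the foliation setting.

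The only delicate point is not a computation but the bookkeeping of how each tensor appearing in \eqref{bochner3} acts on the horizontal versus vertical component of $\eta=df$; the key observations $J_\eta=J_{\eta_\mathcal{V}}$, $\mathbf{J}^2\eta=\mathbf{J}^2\eta_\mathcal{H}$, and $\mathfrak{Ric}_\mathcal{H}\eta=\mathfrak{Ric}_\mathcal{H}\eta_\mathcal{H}$ are what allow the three curvature hypotheses (which are formulated separately on $\mathcal{H}^*$ and $\mathcal{V}^*$) to be plugged in independently to yield the clean dimensional inequality.
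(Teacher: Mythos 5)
Your proposal is correct and follows exactly the paper's own route: apply the strengthened Bochner inequality \eqref{bochner3} to $\eta=df$, identify the left-hand side with $\Gamma_2(f,f)+\varepsilon\Gamma_2^{\mathcal V}(f,f)$ via Theorem \ref{Bochner}, plug in the three curvature hypotheses (with Yang--Mills killing the torsion term), and invoke Theorem \ref{inter_commutation} for the intertwining. The only difference is that you spell out the bookkeeping of how $J_\eta$, $\mathbf{J}^2$ and $\mathfrak{Ric}_{\mathcal H}$ act on the horizontal and vertical components of $df$, which the paper leaves implicit.
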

\begin{proof}
From the inequality \eqref{bochner3}, we have for every   $\eta=df \in \Gamma^\infty(T^* \M)$,
\begin{align*}
 \frac{1}{2} \Dh \| \eta \|_{\varepsilon}^2 -\langle \square_\varepsilon \eta , \eta \rangle_{\varepsilon}    \ge & \frac{1}{n}\left( \mathbf{Tr}_\mathcal{H}  \nabla_\ch^\# \eta \right)^2 -\frac{1}{4} \mathbf{Tr}_\mathcal{H} (J^2_{\eta})+ \left\langle \mathfrak{Ric}_{\mathcal{H}} (\eta), \eta \right\rangle_\mathcal{H}  +\frac{1}{\varepsilon} \langle \mathbf{J}^2 (\eta) , \eta \rangle_\mathcal{H}.
\end{align*}
Using this inequality and taking into account the assumptions
\[
\langle \mathfrak{Ric}_{\mathcal{H}} (\eta_1), \eta_1 \rangle_{\mathcal{H}} \ge  \rho_1 \| \eta_1 \|^2_{\mathcal{H}}, \quad -\langle \mathbf{J}^2 \eta_1, \eta_1  \rangle_\mathcal{H} \le \kappa  \| \eta_1 \|^2_\mathcal{H},\quad -\frac{1}{4} \mathbf{Tr}_\mathcal{H} (J^2_{\eta_2})\ge \rho_2 \| \eta_2 \|^2_\mathcal{V},
\]
immediately yields the expected result. The intertwining $\Gamma (f, \Gamma^\mathcal{V} (f))=\Gamma^\mathcal{V}  (f, \Gamma (f))$ is proved in Theorem \ref{inter_commutation}.
\end{proof}

\subsection{Sharp lower bound for the first eigenvalue of the horizontal Laplacian}

In this section, as a second application of the transverse Weitzenb\"ock formula proved in the previous chapter, we obtain a sharp lower for the first non zero eigenvalue of the horizontal Laplacian.  

Let $\M$ be a compact,  smooth, connected  manifold with dimension $n+m$. We assume that $\bM$ is equipped with a Riemannian foliation $\mathcal{F}$ with bundle like metric $g$ and totally geodesic  $m$-dimensional leaves. We also assume that $\M$ is of Yang-Mills type.

\

We prove the following result that was first obtained in \cite{BK2} in a less general setting. Let us point out that this bound may not be obtained as a consequence of the generalized curvature dimension inequality only.

\begin{theorem}\label{lichne}
Assume that for every smooth horizontal one-form $\eta$,
\[
\langle \mathfrak{Ric}_{\mathcal{H}}(\eta),\eta \rangle_\mathcal{H} \ge \rho_1 \| \eta \|^2_\mathcal{H},\quad  \left\langle -\mathbf{J}^2(\eta), \eta \right\rangle_\mathcal{H} \le \kappa \| \eta \|^2_\mathcal{H},
\]
and that for every vertical one-form $\eta$,
\[
\mathbf{Tr} ( J_\eta^* J_\eta) \ge \rho_2 \| \eta \|_\mathcal{V}^2,
\]
with $\rho_1,\rho_2 >0$ and $\kappa \ge 0$. Then the first eigenvalue $\lambda_1$ of the horizontal Laplacian $-\Dh$  satisfies
\begin{align*}
 \lambda_1 \geq  \frac{\rho_1}{1-\frac{1}{n}+\frac{3\kappa}{\rho_2}}.
\end{align*}
\end{theorem}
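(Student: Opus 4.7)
The plan is to run a Lichnerowicz-type calculation using the transverse Bochner--Weitzenb\"ock identity of Theorem~\ref{Bochner2}. Let $f$ be a non-constant eigenfunction with $-\Dh f = \lambda_1 f$. By Theorem~\ref{Bochner} one has $\square_\varepsilon(df)=d(\Dh f)=-\lambda_1\,df$. Pair the identity of Theorem~\ref{Bochner2}, applied to $\eta=df$, against $df$ and integrate over the compact manifold $\M$: the divergence term $\int_\M \Dh\|df\|_\varepsilon^2\,d\mu$ vanishes, and the Yang--Mills hypothesis eliminates the $\delta_\mathcal{H} T$ contribution. One is left with
\[
\lambda_1\!\int_\M \|df\|_\varepsilon^2\,d\mu = \int_\M\!\|\nabla_\mathcal{H} df-\mathfrak{T}_\mathcal{H}^\varepsilon df\|_\varepsilon^2\,d\mu + \int_\M\!\langle \mathfrak{Ric}_{\mathcal{H}}(df),df\rangle_\mathcal{H}\,d\mu + \frac{1}{\varepsilon}\int_\M\!\langle \mathbf{J}^2(df),df\rangle_\mathcal{H}\,d\mu.
\]

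Next, expand $\|\nabla_\mathcal{H} df - \mathfrak{T}_\mathcal{H}^\varepsilon df\|_\varepsilon^2$ as in the derivation of \eqref{bochner3}, and bound the horizontal symmetric gradient below by Cauchy--Schwarz, $\|\nabla_\mathcal{H}^\# df\|_\mathcal{H}^2 \geq \tfrac{1}{n}(\mathbf{Tr}_\mathcal{H}\nabla_\mathcal{H}^\# df)^2 = \tfrac{1}{n}(\Dh f)^2$. The three curvature hypotheses then give $-\tfrac{1}{4}\mathbf{Tr}_\mathcal{H}(J^2_{df})\geq \tfrac{\rho_2}{4}\Gamma^\mathcal{V}(f,f)$ (using skew-symmetry of $J_Z$ to rewrite $\mathbf{Tr}(J^*_\eta J_\eta)=-\mathbf{Tr}(J^2_\eta)$), $\langle\mathfrak{Ric}_\mathcal{H}(df),df\rangle_\mathcal{H}\geq \rho_1\Gamma(f,f)$, and $\tfrac{1}{\varepsilon}\langle\mathbf{J}^2 df,df\rangle_\mathcal{H}\geq -\tfrac{\kappa}{\varepsilon}\Gamma(f,f)$. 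The eigenfunction identities $\int_\M (\Dh f)^2\,d\mu = \lambda_1^2\!\int_\M f^2\,d\mu$ and $\int_\M \Gamma(f,f)\,d\mu=\lambda_1\!\int_\M f^2\,d\mu$, together with $\|df\|_\varepsilon^2=\Gamma(f,f)+\varepsilon\Gamma^\mathcal{V}(f,f)$, reduce everything to an inequality among $\lambda_1$, $\varepsilon$, $\int_\M f^2\,d\mu$ and $\int_\M\Gamma^\mathcal{V}(f,f)\,d\mu$. Since the $\Gamma^\mathcal{V}$-integral is uncontrolled, the final step is to eliminate it by picking $\varepsilon=\rho_2/(4\lambda_1)$, which leaves a linear inequality in $\lambda_1$.

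The main obstacle is extracting the sharp constant $3\kappa/\rho_2$: the naive accounting just described yields only the weaker bound $\lambda_1\geq\rho_1/\bigl(1-\tfrac{1}{n}+\tfrac{4\kappa}{\rho_2}\bigr)$. The improvement requires that one \emph{not} discard the non-negative remainder $\varepsilon\,\|(\nabla_\mathcal{H} df-\mathfrak{T}_\mathcal{H}^\varepsilon df)|_\mathcal{V}\|_\mathcal{V}^2$ appearing in the expansion preceding \eqref{bochner3}, but rather pair it with the explicit $\tfrac{1}{\varepsilon}\langle\mathbf{J}^2 df,df\rangle_\mathcal{H}$ term. Using the formula $\mathfrak{T}_{X_i}^\varepsilon df(Z_j)=\tfrac{1}{\varepsilon}\langle J_{Z_j}\nabla_\mathcal{H} f,X_i\rangle_\mathcal{H}$, the diagonal $\varepsilon^{-2}$--piece of that remainder sums to $-\tfrac{1}{\varepsilon}\langle\mathbf{J}^2 d_\mathcal{H} f,d_\mathcal{H} f\rangle_\mathcal{H}$, producing a partial cancellation with the $\mathbf{J}^2$--term already present in the identity. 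A weighted Young's inequality, with optimally balanced weight, controls the surviving cross terms and reduces the effective coefficient in front of $\kappa/\varepsilon$ from $1$ to $3/4$; the same final optimization $\varepsilon=\rho_2/(4\lambda_1)$ then delivers the advertised bound. This refined bookkeeping of the vertical gradient is where the assumption $\rho_2>0$ genuinely enters, and is the heart of the proof.
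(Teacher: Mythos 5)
Your skeleton matches the paper's: apply Theorem \ref{Bochner2} to $\eta=df$ with $\square_\varepsilon df=-\lambda_1 df$, integrate over the compact manifold, invoke the three curvature hypotheses, and optimize over $\varepsilon$; and you correctly diagnose that the naive route through \eqref{bochner3} only yields $\lambda_1\ge \rho_1/(1-\tfrac1n+\tfrac{4\kappa}{\rho_2})$. The gap is in the mechanism you propose for closing the factor from $4$ to $3$. After your diagonal cancellation $\varepsilon\|\mathfrak{T}_\mathcal{H}^\varepsilon df\|_\mathcal{V}^2=-\tfrac1\varepsilon\langle \mathbf{J}^2\nabla_\mathcal{H}f,\nabla_\mathcal{H}f\rangle$, what survives of the vertical remainder is $\varepsilon\|\nabla_\mathcal{H}df\|_\mathcal{V}^2-2\varepsilon\langle\nabla_\mathcal{H}df,\mathfrak{T}_\mathcal{H}^\varepsilon df\rangle_\mathcal{V}$, and no pointwise weighted Young inequality can estimate the cross term with an effective coefficient of $\kappa/\varepsilon$ below $1$: in $-2\varepsilon\langle a,b\rangle\ge -\varepsilon\alpha\|a\|^2-\tfrac{\varepsilon}{\alpha}\|b\|^2$ you must take $\alpha\le 1$ to absorb $\varepsilon\alpha\|a\|^2$ into the single available copy of $\varepsilon\|a\|^2$, and then $\tfrac{\varepsilon}{\alpha}\|b\|^2\ge\varepsilon\|b\|^2\le\tfrac{\kappa}{\varepsilon}\|df\|_\mathcal{H}^2$ is exactly the loss you started with. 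The cross term has no pointwise sign, so your refined bookkeeping collapses back to the $4\kappa/\rho_2$ bound.

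The missing idea is that the cross term \emph{does} have a sign after integration, and this is where the Yang--Mills hypothesis really enters (not merely to kill $\delta_\mathcal{H}T$ inside $\square_\varepsilon$): integration by parts gives
\[
\int_\M \langle \nabla_\mathcal{H} df,\mathfrak{T}_\mathcal{H}^\varepsilon df\rangle_\mathcal{V}\,d\mu=\frac{1}{\varepsilon}\int_\M \mathbf{Tr}\bigl(J^*_{\nabla_\mathcal{V}f}J_{\nabla_\mathcal{V}f}\bigr)\,d\mu\ \ge\ 0.
\]
The paper then splits $\langle\nabla_\mathcal{H}df,\mathfrak{T}_\mathcal{H}^\varepsilon df\rangle_\mathcal{V}=\tfrac32\langle\cdots\rangle-\tfrac12\langle\cdots\rangle$, absorbs the $\tfrac32$-part into the complete square $\varepsilon\|\nabla_\mathcal{H}df-\tfrac32\mathfrak{T}_\mathcal{H}^\varepsilon df\|_\mathcal{V}^2\ge0$, keeps the $-\tfrac12$-part as a positive multiple of $\int_\M\mathbf{Tr}(J^*_{\nabla_\mathcal{V}f}J_{\nabla_\mathcal{V}f})\,d\mu\ge\rho_2\int_\M\|df\|_\mathcal{V}^2\,d\mu$, and pays $-\tfrac54\varepsilon\|\mathfrak{T}_\mathcal{H}^\varepsilon df\|_\mathcal{V}^2\ge-\tfrac{5\kappa}{4\varepsilon}\|df\|_\mathcal{H}^2$. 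The net effect is to raise the coefficient of $\rho_2\int\|df\|_\mathcal{V}^2$ to $\tfrac34$ while the total $\kappa/\varepsilon$ coefficient becomes $\tfrac94$; the optimization is then $\varepsilon\lambda_1=\tfrac34\rho_2$ rather than your $\tfrac{\rho_2}{4}$, giving $\tfrac94\cdot\tfrac{\kappa}{\varepsilon}=\tfrac{3\kappa}{\rho_2}\lambda_1$ and the stated bound. It is this integral identity together with the $\tfrac32$-weighted completion of the square --- not a pointwise cancellation against the $\mathbf{J}^2$ term --- that produces the constant $3\kappa/\rho_2$.
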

To put things in perspective, we give  examples where this bound is sharp.
\begin{itemize}
\item Let us consider   the Hopf fibration $ \mathbf{U}(1) \to \mathbb{S}^{2d+1} \to \mathbb{CP}^d$. As we know, the horizontal Laplacian $\Dh$ is  the lift of the Laplace-Beltrami operator on $\mathbb{CP}^d$ and in that case $\lambda_1=2d$ (see \ref{spectreS1}). On the other hand, for this example, $\rho_1=2(d+1)$, $\kappa=1$, $\rho_2=2d$.  Thus the bound of Theorem \ref{lichne} is sharp.

\item Consider now the quaternionic Hopf fibration $ \mathbf{SU}(2) \to \mathbb{S}^{4d+3} \to \mathbb{HP}^d$. The sub-Laplacian $\Dh $ is then the lift of the Laplace-Beltrami operator on  $\mathbb{HP}^d$ and in that case, $\lambda_1=4d$ (see \ref{spectreSU}).  For this example, $\rho_1=4(d+2)$, $\kappa=3$, $\rho_2=4d$.  Thus the bound of Theorem \ref{lichne} is still sharp in this example.

\end{itemize}
We also mention that it has even been proved in \cite{BK2} that for some Riemannian foliations the equality  $\lambda_1 =  \frac{\rho_1}{1-\frac{1}{n}+\frac{3\kappa}{\rho_2}}$ actually implies that the foliation is equivalent to the classical or the quaternionic Hopf fibration.

\begin{proof}
As for the classical Lichnerowicz estimate on Riemannian manifolds, the idea is to integrate on the manifold the Bochner-Weitzenb\"ock equality in Theorem \ref{Bochner2} but some tricks have to be done.  
Let $f \in C^\infty(\M)$. Let us first observe that
\begin{align*}
-\int_\M \langle \square_\varepsilon df, df \rangle_\varepsilon d\mu&= -\int_\M \langle  d\Dh f, df \rangle_\varepsilon d\mu \\
 &= -\int_\M \langle  d\Dh f, df \rangle_\Ho d\mu-\varepsilon  \int_\M \langle  d\Dh f, df \rangle_\V d\mu
\end{align*}

Thus, by integrating the  Bochner-Weitzenb\"ock equality in Theorem \ref{Bochner2}, we obtain
\begin{align}\label{BB}
\int_\M (\Dh f)^2 d\mu -\varepsilon \int_\M \langle  d(\Dh f) , df \rangle_{\mathcal{V}} d\mu  \ge \int_\M \| \nabla_{\mathcal{H}} df  -\mathfrak{T}_\mathcal{H}^\varepsilon  df \|_{\varepsilon}^2 d\mu +\left( \rho_1-\frac{\kappa}{\varepsilon}\right)\int_\M \| df \|_{\mathcal{H}}^2 d\mu .
\end{align}
We now compute
\begin{align}\label{IPP2}
 & \int_\M \| \nabla_{\mathcal{H}} df  -\mathfrak{T}_\mathcal{H}^\varepsilon  df \|_{\varepsilon}^2 d\mu \notag  \\
 =&\int_\M \| \nabla_{\mathcal{H}} df  -\mathfrak{T}_\mathcal{H}^\varepsilon  df \|_{\mathcal{H}}^2 d\mu+ \varepsilon \int_\M \| \nabla_{\mathcal{H}} df  -\mathfrak{T}_\mathcal{H}^\varepsilon  df \|_{\mathcal{V}}^2 d\mu \notag  \\
 =&\int_\M \| \nabla_{\mathcal{H}} df  -\mathfrak{T}_\mathcal{H}^\varepsilon  df \|_{\mathcal{H}}^2 d\mu+ \varepsilon \int_\M \| \nabla_{\mathcal{H}} df   \|_{\mathcal{V}}^2 d\mu -2 \varepsilon \int_\M \langle \nabla_\mathcal{H} df , \mathfrak{T}_\mathcal{H}^\varepsilon (df) \rangle_{\mathcal{V}} d\mu+ \varepsilon \int_\M \| \mathfrak{T}_\mathcal{H}^\varepsilon df   \|_{\mathcal{V}}^2d\mu.
\end{align}
Using the definition $\mathfrak{T}_\mathcal{H}^\varepsilon $ together with the Yang-Mills assumption, we see that
\begin{align}\label{IPP}
\int_\M \langle \nabla_\mathcal{H} df , \mathfrak{T}_\mathcal{H}^\varepsilon (df) \rangle_{\mathcal{V}}d\mu=\frac{1}{ \varepsilon} \int_\M \mathbf{Tr} ( J^*_{\nabla_\mathcal{V}f} J_{\nabla_\mathcal{V}f})d\mu.
\end{align}
By using \eqref{IPP}, the trick is now to write
\begin{align*}
\int_\M \langle \nabla_\mathcal{H} df , \mathfrak{T}_\mathcal{H}^\varepsilon (df) \rangle_{\mathcal{V}}d\mu&=\frac{3}{2} \int_\M \langle \nabla_\mathcal{H} df , \mathfrak{T}_\mathcal{H}^\varepsilon (df) \rangle_{\mathcal{V}}d\mu -\frac{1}{2} \int_\M \langle \nabla_\mathcal{H} df , \mathfrak{T}_\mathcal{H}^\varepsilon (df) \rangle_{\mathcal{V}} d\mu\\
 &=\frac{3}{2} \int_\M \langle \nabla_\mathcal{H} df , \mathfrak{T}_\mathcal{H}^\varepsilon (df) \rangle_{\mathcal{V}} d\mu- \frac{1}{4 \varepsilon} \int_\M \mathbf{Tr} ( J^*_{\nabla_\mathcal{V}f} J_{\nabla_\mathcal{V}f})d\mu.
\end{align*}
Coming back to \eqref{IPP2} and completing the squares gives
\begin{align*}
\int_\M \| \nabla_{\mathcal{H}} df  -\mathfrak{T}_\mathcal{H}^\varepsilon  df \|_{\varepsilon}^2d\mu =& \int_\M \| \nabla_{\mathcal{H}} df  -\mathfrak{T}_\mathcal{H}^\varepsilon  df \|_{\mathcal{H}}^2 d\mu+\varepsilon \int_\M \left\| \nabla_{\mathcal{H}} df  -\frac{3}{2}\mathfrak{T}_\mathcal{H}^\varepsilon  df \right\|_{\mathcal{V}}^2d\mu \\
 & +\frac{1}{2} \int_\M \mathbf{Tr} ( J^*_{\nabla_\mathcal{V}f} J_{\nabla_\mathcal{V}f})d\mu-\frac{5}{4}\varepsilon \int_\M \|\mathfrak{T}_\mathcal{H}^\varepsilon  df \|_{\mathcal{V}}^2d\mu.
 \end{align*}
 This yields the lower bound
\[
\int_\M \| \nabla_{\mathcal{H}} df  -\mathfrak{T}_\mathcal{H}^\varepsilon  df \|_{\varepsilon}^2d\mu \ge \frac{1}{n}\int_\M  (\Dh f)^2 d\mu+\frac{3}{4} \rho_2 \int_\M  \| df \|_{\mathcal{V}}^2d\mu-\frac{5}{4 \varepsilon} \kappa \int_\M  \| df \|_{\mathcal{H}}^2 d\mu.
\]
We thus deduce
\[
\frac{n-1}{n} \int_\M (\Dh f)^2d\mu -\varepsilon \int_\M \langle  d(\Dh f) , df \rangle_{\mathcal{V}}d\mu   \ge \left( \rho_1-\frac{9\kappa}{4\varepsilon}\right)\int_\M \| df \|_{\mathcal{H}}^2d\mu +\frac{3}{4} \rho_2 \int_\M  \| df \|_{\mathcal{V}}^2d\mu .
\]
Now if $f$ is an eigenfunction that satisfies $\Dh f =-\lambda_1 f$, we get
\[
\frac{n-1}{n} \lambda_1^2 \int_\M f^2 d\mu +\varepsilon \lambda_1 \int_\M  \| df \|_{\mathcal{V}}^2d\mu   \ge \left( \rho_1-\frac{9\kappa}{4\varepsilon}\right)\lambda_1 \int_\M f^2d\mu +\frac{3}{4} \rho_2 \int_\M  \| df \|_{\mathcal{V}}^2d\mu .
\]
Choosing $\varepsilon$ such that
\[
\varepsilon \lambda_1 =\frac{3}{4} \rho_2,
\]
yields the desired lower bound on $\lambda_1$.
\end{proof}

\section{The  horizontal heat semigroup}

We introduce here a fundamental tool in the geometric analysis of Riemannian foliations: the horizontal heat semigroup. We study then some of its properties like stochastic completeness and quickly discuss the Li-Yau estimates for this semigroup. 

\subsection{Essential self-adjointness of the horizontal Laplacian}

Let $\M$ be a smooth, connected  manifold with dimension $n+m$. We assume that $\bM$ is equipped with a Riemannian foliation  with a bundle like metric $g$ and totally geodesic  $m$-dimensional leaves. We assume that the metric $g$ is complete and denote by $C_0^\infty(\M)$ the space of smooth and compactly supported functions on $\M$. We will also assume that the horizontal distribution $\mathcal{H}$ of the foliation is bracket generating. From H\"ormander's theorem, the bracket generating condition implies that the horizontal Laplacian $\Dh$ is hypoelliptic.

\

An important consequence of the completeness assumption is  the fact that there exists an increasing
sequence $h_n\in C^\infty_0(\bM)$  such that $h_n\nearrow 1$ on
$\bM$, and 
\begin{align}\label{exhaustion}
||\nabla_{\mathcal{H}} h_n||_{\infty} + ||\nabla_{\mathcal{V}} h_n||_{\infty} \to 0,
\end{align}
as $n\to \infty$. We refer to Strichartz (\cite{Strichartz}) for a proof of this fact.

It will be convenient to introduce the following operators defined for $f,g \in C^\infty(\M)$ by
\[
\Gamma(f,g)=\frac{1}{2} ( \Dh(fg) -g\Dh f -f\Dh g)=\langle \nabla_\mathcal{H} f , \nabla_\mathcal{H} g\rangle_\mathcal{H}
\]
and 
\[
\Gamma^\mathcal{V} (f,g)=\langle \nabla_\mathcal{V} f , \nabla_\mathcal{V} g\rangle_\mathcal{V}.
\]
As a shorthand notation, we will use the notations $\Gamma(f)=\Gamma(f,f)$ and $\Gamma^\mathcal{V} (f)=\Gamma^\mathcal{V} (f,f)$.

\begin{proposition}
The horizontal Laplacian $\Dh$ is essentially self-adjoint on the space $C_0^\infty(\M)$. 
\end{proposition}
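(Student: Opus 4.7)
The plan is to exploit the standard criterion for essential self-adjointness of a negative symmetric operator: it suffices to show that for some (equivalently, every) $\lambda>0$, the only $f\in L^2(\mu)$ satisfying $\Delta_{\mathcal{H}} f=\lambda f$ in the distributional sense is $f=0$. I will take $\lambda=1$.

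First I would note that $\Delta_{\mathcal{H}}$ is symmetric and non-positive on $C_0^\infty(\M)$, since the Dirichlet form representation yields $\langle \Delta_{\mathcal{H}} f,g\rangle_{L^2(\mu)}=-\int_\M\langle\nabla_{\mathcal{H}} f,\nabla_{\mathcal{H}} g\rangle\,d\mu$ on $C_0^\infty(\M)$. Then I would suppose $f\in L^2(\mu)$ satisfies $\Delta_{\mathcal{H}} f=f$ in the distributional sense. Because $\mathcal{H}$ is bracket generating, $\Delta_{\mathcal{H}}$ is hypoelliptic by H\"ormander's theorem, and therefore $f$ is smooth.

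The main step is a weighted integration by parts against the exhaustion sequence $h_n\in C_0^\infty(\M)$ provided by \eqref{exhaustion}. Using $h_n^2 f$ as a compactly supported test function, one obtains
\[
\int_\M h_n^2 f^2\,d\mu=\int_\M h_n^2 f\,\Delta_{\mathcal{H}} f\,d\mu=-\int_\M h_n^2\|\nabla_{\mathcal{H}} f\|^2\,d\mu-2\int_\M h_n f\,\langle\nabla_{\mathcal{H}} h_n,\nabla_{\mathcal{H}} f\rangle\,d\mu.
\]
Applying the elementary pointwise bound $2|h_n f\langle\nabla_{\mathcal{H}} h_n,\nabla_{\mathcal{H}} f\rangle|\le h_n^2\|\nabla_{\mathcal{H}} f\|^2+f^2\|\nabla_{\mathcal{H}} h_n\|^2$ and rearranging, the gradient terms cancel and I obtain
\[
\int_\M h_n^2 f^2\,d\mu\le \|\nabla_{\mathcal{H}} h_n\|_\infty^{\,2}\int_\M f^2\,d\mu.
\]
Letting $n\to\infty$, the monotone convergence theorem and \eqref{exhaustion} give $\|f\|_{L^2(\mu)}^2\le 0$, hence $f=0$, which concludes the argument.

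The only delicate point I anticipate is justifying the distributional integration by parts against $h_n^2 f$, which is merely compactly supported and smooth rather than of class $C_0^\infty$; this is handled by a routine mollification of $h_n^2 f$ together with the fact that $f$ is smooth by hypoellipticity, so no essential difficulty arises. The global completeness of $g$ is used only through the existence of the sequence $(h_n)$ satisfying \eqref{exhaustion}.
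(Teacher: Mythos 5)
Your proof is correct and follows essentially the same route as the paper: reduce to showing that an $L^2$ distributional solution of $\Delta_{\mathcal{H}}f=\lambda f$, $\lambda>0$, vanishes, use hypoellipticity to get smoothness, and integrate by parts against $h_n^2 f$ with the exhaustion sequence \eqref{exhaustion}. The only (harmless) difference is bookkeeping: you keep the term $\lambda\int h_n^2 f^2\,d\mu$ and absorb the gradient terms, while the paper discards that term and bounds $\int h_n^2\Gamma(f)\,d\mu$ instead; also note that $h_n^2 f$ is already in $C_0^\infty(\M)$, so the mollification you worry about at the end is unnecessary.
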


\begin{proof}
According to Reed-Simon \cite{reed1}, p. 137, it is enough
to prove that  if $\Dh^* f=\lambda f$ with $\lambda >0$, then
$f=0$. Since $\Dh$ is given on the domain $C_0^\infty(\M)$, this means that $\Dh f=\lambda f$ in the sense of distributions.

From the hypoellipticity of $\Dh$, we first deduce that $f$ has
to be a smooth function. Now, for $h \in C^\infty_0(\bM)$,

\begin{align*}
\int_\M \Gamma( f, h^2f) d\mu=-\int_\M  f\Dh (h^2f) d\mu=-\int_\M  (\Dh^*f)(h^2f) d\mu =-\lambda \int_\M  f^2h^2 d\mu \le 0.
\end{align*}

Since
\[
 \Gamma( f, h^2f)=h^2 \Gamma (f,f)+2 fh \Gamma(f,h),
 \]
 we deduce that
 \[
 \int_\M  h^2\Gamma (f) d\mu +2 \int_\M  h \Gamma(f,h) d\mu \le 0.
 \]
 Therefore, by Schwarz inequality
 \[
 \int_\M  h^2\Gamma (f) d\mu \le 4 \| f |_2^2 \| \Gamma (h) \|_\infty.
 \]
 If we now use a sequence $h_n$ that satisfies \ref{exhaustion} and let $n \to \infty$, we obtain $\Gamma(f)=0$ and therefore $f=0$, as desired.
\end{proof}

If $\Dh=-\int_0^{+\infty} \lambda dE_\lambda$ is the spectral
resolution of the Friedrichs extension of  $\Dh$ in $L^2 (\bM,\mu)$, then by definition, the
heat semigroup $(P_t)_{t \ge 0}$ is given by $P_t= \int_0^{+\infty}
e^{-\lambda t} dE_\lambda$. It is a symmetric Markov semigroup on 
$L^2 (\bM,\mu)$. That is, it satisfies the following properties:
\begin{itemize}
\item $P_0=\mathbf{Id}$;
\item $P_{t+s}=P_t P_s$, $s,t \ge 0$;
\item For $f \in L^2(\M,\mu)$, $\lim_{t \to 0}\| P_t f -f \|_2=0$;
\item $\|P_t f \|_2 \le \| f \|_2$;
\item If $f \in L^2(\M,\mu)$ is non negative, then $P_t f \ge 0$;
\item If $f \in L^2(\M,\mu)$ is less than one, then $P_t f \le 1$.
\end{itemize}

By using the Riesz-Thorin interpolation theorem, $(P_t)_{t\ge 0}$ induces a contraction semigroup on all the $L^p(\M,\mu)$'s, $1\le p \le \infty$.

Due to the hypoellipticity of $\Dh$, $(t,x) \rightarrow P_t f(x)$ is
smooth on $\mathbb{M}\times (0,\infty) $ and
\[ P_t f(x)  = \int_{\mathbb M} p(x,y,t) f(y) d\mu(y),\ \ \ f\in
C^\infty_0(\mathbb M),\] where $p(x,y,t) > 0$ is the so-called heat
kernel associated to $P_t$. Such function is smooth and it is symmetric, i.e., \[ p(x,y,t)
= p(y,x,t). \]
 By the semigroup property for every $x,y\in \bM$ and $0<s,t$ we have
\begin{equation*}
p(x,y,t+s) = \int_\bM p(x,z,t) p(z,y,s) d\mu(z) = \int_\bM p(x,z,t)
p(y,z,s) d\mu(z) = P_s(p(x,\cdot,t))(y).
\end{equation*}

For a more analytic view of $(P_t)_{t\ge 0}$, we recall that it can be seen as the unique solution of a parabolic Cauchy problem in $L^p(\bM,\mu),1<p<+\infty$.
\begin{proposition}\label{uniquenessLp}
The unique solution of the Cauchy problem
\[
\begin{cases}
\frac{\p u}{\p t} - \Dh u = 0,
\\
u(x,0) = f(x),\ \ \ \   f\in L^p(\bM,\mu),1<p<+\infty,
\end{cases}
\]
that satisfies $\| u(\cdot,t) \|_p <\infty$ for every $t \ge 0$, is given by $u(x,t)=P_t f(x)$.
\end{proposition}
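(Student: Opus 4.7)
Existence is built into the construction of $P_t$: the function $u(x,t) = P_tf(x)$ is smooth on $\M \times (0,\infty)$ by hypoellipticity of $\partial_t - \Dh$, and $\|P_tf\|_p \le \|f\|_p$ by the contraction property on $L^p$ obtained via Riesz--Thorin from the $L^1$ and $L^\infty$ contractions. For uniqueness, by linearity I reduce to showing that any solution $u$ of $\partial_t u = \Dh u$ with $u(\cdot,0) \equiv 0$ and $\|u(\cdot,s)\|_p < \infty$ for every $s \ge 0$ vanishes identically. Hypoellipticity of $\partial_t - \Dh$ on $\M \times (0,\infty)$ immediately yields $u \in C^\infty(\M \times (0,\infty))$.

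My approach is a duality argument against the dual semigroup on $L^q$, $q = p/(p-1) \in (1,\infty)$. Fix $t_0 > 0$ and $\phi \in C_0^\infty(\M)$ and set $v(x,s) = P_{t_0-s}\phi(x)$, which is smooth on $\M \times [0,t_0)$, satisfies $\partial_s v = -\Dh v$, and obeys $\|v(\cdot,s)\|_q \le \|\phi\|_q$ uniformly in $s$. Define
\[
J(s) = \int_\M u(x,s)\,v(x,s)\,d\mu(x), \qquad s \in [0,t_0],
\]
which is finite by H\"older. Formally,
\[
J'(s) = \int_\M (\Dh u)v\,d\mu - \int_\M u\,\Dh v\,d\mu,
\]
and I want the symmetry of $\Dh$ to give $J' \equiv 0$. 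Together with continuity of $J$ at the endpoints (using $L^p$-continuity of $u$ at $s=0$ and $L^q$-continuity of $P_{t_0-s}\phi$ at $s=t_0$), this would yield $J(t_0) = J(0) = 0$, hence $\int u(\cdot,t_0)\phi\,d\mu = 0$ for every $\phi \in C_0^\infty(\M)$, and density of $C_0^\infty(\M)$ in $L^q$ would force $u(\cdot,t_0) = 0$.

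The main obstacle is justifying $J'(s) = 0$ rigorously at infinity, since neither $u$ nor $v$ is compactly supported. I would insert the cutoff $h_n^2$ from \eqref{exhaustion}: on the compact set where $h_n^2 \ne 0$ the integrals are finite and Green's identity gives
\[
\int_\M h_n^2\bigl[(\Dh u)v - u\,\Dh v\bigr]\,d\mu = 2\int_\M h_n\, u \langle \nabla_\ch h_n, \nabla_\ch v \rangle\,d\mu - 2\int_\M h_n\, v \langle \nabla_\ch h_n, \nabla_\ch u \rangle\,d\mu.
\]
For the first error term, $\nabla_\ch v = \nabla_\ch P_{t_0-s}\phi$ lies in $L^q$ with a bound independent of $n$ (this follows either from a Bakry-type commutation $\Gamma(P_{t_0-s}\phi) \le C\, P_{t_0-s}\Gamma(\phi)$ provided by Section~4, or from direct kernel estimates applied to the compactly supported smooth $\phi$); combined with $\|\nabla_\ch h_n\|_\infty \to 0$ and $u \in L^p$, the term vanishes. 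The second error term is more delicate because $\nabla_\ch u$ has no \emph{a priori} global integrability; one passes through local $L^p$-estimates on $\nabla_\ch u$ on the expanding supports of $h_n$ (standard interior subelliptic regularity applied to the parabolic equation), and pairs them against the uniform decay of $\|\nabla_\ch h_n\|_\infty$ and the global $L^q$ bound on $v$. This local-to-global reconciliation of the cutoff estimate is the heart of the argument; everything else is routine.
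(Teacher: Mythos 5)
Your duality scheme is not the paper's route, and as written it has a genuine gap precisely where you locate "the heart of the argument": the error term $\int_\M h_n\, v\, \langle \nabla_\ch h_n, \nabla_\ch u\rangle\, d\mu$. Interior subelliptic (or parabolic) regularity gives bounds of the form $\|\nabla_\ch u\|_{L^p(K_n\times[a,b])}\le C(K_n,K_n')\|u\|_{L^p(K_n'\times[a',b'])}$, where the constants depend on the expanding compact sets $K_n\subset K_n'$ carrying $\mathrm{supp}\,\nabla_\ch h_n$; nothing in the hypotheses makes these constants uniform in $n$, so there is no reason the local norms of $\nabla_\ch u$ grow more slowly than $\|\nabla_\ch h_n\|_\infty^{-1}$, and the limit $n\to\infty$ cannot be taken. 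A secondary problem is that your treatment of the first error term invokes a commutation $\Gamma(P_s\phi)\le C\,P_s\Gamma(\phi)$; in this paper such bounds (Theorem \ref{complete}) require the Yang--Mills condition and the curvature lower bounds of Section 5.2--5.3, none of which are assumed in Proposition \ref{uniquenessLp}, which is stated under completeness and bracket generation alone.

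The paper avoids ever needing a bound on $\nabla_\ch u$ by running a Caccioppoli (energy) argument instead of a duality argument: for a non-negative subsolution $v$ with zero initial data one tests the inequality $\partial_t v\le \Dh v$ against $h_n^2 v^{p-1}$ and integrates in time. After integrating by parts, the elementary pointwise inequality
\[
0\le \Bigl(\sqrt{\tfrac{2}{p-1}\Gamma(h_n)}\,v-\sqrt{\tfrac{p-1}{2}\Gamma(v)}\,h_n\Bigr)^2
\]
absorbs the cross term $2h_n v^{p-1}\Gamma(h_n,v)$ into the \emph{good-signed} gradient term $\int h_n^2 v^{p-2}\Gamma(v)$, which can then simply be dropped. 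One is left with
\[
\int_\M h_n^2\, v^p(\cdot,\tau)\, d\mu\ \le\ \frac{2p}{p-1}\,\|\Gamma(h_n)\|_\infty^2\int_0^\tau\!\!\int_\M v^p\, d\mu\, dt,
\]
and the right-hand side vanishes as $n\to\infty$ using only $\|\Gamma(h_n)\|_\infty\to 0$ and $v(\cdot,t)\in L^p$. This is the idea your proposal is missing: the cutoff must be paired with the solution itself (so that the uncontrolled gradient appears quadratically with a favourable sign), not with a dual test function. If you want to salvage your approach, you would in any case first have to establish this Caccioppoli bound to control $\int h_n^2\Gamma(u)$ --- at which point the paper's direct argument already concludes.
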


We stress that without further conditions, this result fails when $p=1$ or $p=+\infty$. The case $p=+\infty$ is equivalent to stochastic completeness ($P_t 1=1$) and will be discussed in a later section.

\subsection{Horizontal heat semigroup on one-forms}

Throughout the section, we work under the same assumptions as the previous section and we moreover assume  that for every horizontal one-form $\eta$,
\[
 \langle \mathfrak{Ric}_{\mathcal{H}} (\eta) , \eta  \rangle_\mathcal{H} \ge -K \| \eta \|^2_\mathcal{H} , \quad -\langle \mathbf{J}^2 \eta, \eta  \rangle_\mathcal{H} \le \kappa  \| \eta \|^2_\mathcal{H},
\]
with $K,\kappa \ge 0$. We also assume that the horizontal distribution $\mathcal{H}$ is Yang-Mills, which means that
\[
\delta_\mathcal{H} T=0.
\]
We recall  that if we consider the operator defined on one-forms by the formula
 \[
\square_\varepsilon=-(\nabla_\mathcal{H} -\mathfrak{T}_\mathcal{H}^\varepsilon)^* (\nabla_\mathcal{H} -\mathfrak{T}_\mathcal{H}^\varepsilon)-\frac{1}{ \varepsilon}\mathbf{J}^2 - \mathfrak{Ric}_{\mathcal{H}},
\]
then for any smooth function $f$,
\[
d\Dh f =\square_{\varepsilon} df
\]
and  for any smooth one-form $\eta$
\begin{align*}
\frac{1}{2} \Dh \| \eta \|_{2\varepsilon}^2 -\langle \square_\varepsilon \eta , \eta \rangle_{\varepsilon} & =  \| \nabla_{\mathcal{H}} \eta  -\mathfrak{T}^\varepsilon_{\mathcal{H}} \eta \|_{\varepsilon}^2 +\left\langle \left(\mathfrak{Ric}_{\mathcal{H}}+\frac{1}{ \varepsilon} \mathbf{J}^2\right)\eta, \eta \right\rangle_{\varepsilon} \\
 & \ge \left( \rho-\frac{\kappa}{\varepsilon} \right) \| \eta \|^2_\mathcal{H} .
\end{align*}

The operator $\square_\varepsilon$ is symmetric for the metric 
\[
g_\varepsilon=g_\mathcal{H} \oplus \frac{1}{\varepsilon} g_\mathcal{V}.
\] 
Thanks to our assumptions we can even say more.

\begin{lemma}
The operator  $\square_\varepsilon$ is essentially self-adjoint on the space of smooth and compactly supported one-forms for the Riemannian metric $g_{\varepsilon}$.
\end{lemma}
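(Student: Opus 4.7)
The plan is to invoke the standard Reed--Simon criterion for a symmetric, semi-bounded operator: if $\square_\varepsilon$ is bounded above by $C_0$ on the initial domain, then it is essentially self-adjoint if and only if, for every $\mu > C_0$, the equation $\square_\varepsilon^* \eta = \mu \eta$ has no nonzero solution in $L^2(g_\varepsilon)$. Symmetry of $\square_\varepsilon$ with respect to $g_\varepsilon$ is immediate from inspection of its three constituents: $-(\nabla_\mathcal{H} - \mathfrak{T}_\mathcal{H}^\varepsilon)^*(\nabla_\mathcal{H} - \mathfrak{T}_\mathcal{H}^\varepsilon)$ is manifestly of the form $-T^*T$ (with $g_\varepsilon$-adjoint), $\mathfrak{Ric}_\mathcal{H}$ is symmetric by construction, and $\mathbf{J}^2 = \sum_\ee J_{Z_\ee}^2$ is pointwise symmetric since each $J_{Z_\ee}$ is skew by \eqref{Jmap}. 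Integrating the Bochner identity of Theorem~\ref{Bochner2} against compactly supported smooth $\eta$, the $\int \Delta_\mathcal{H}\|\eta\|_\varepsilon^2\, d\mu$ term vanishes by integration by parts, and the curvature hypotheses together with $\|\eta\|_\mathcal{H}^2 \le \|\eta\|_\varepsilon^2$ yield the upper bound $C_0 := K + \kappa/\varepsilon$.

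Fix $\mu > C_0$ and suppose $\eta \in L^2(g_\varepsilon)$ solves $\square_\varepsilon \eta = \mu\eta$ distributionally. The first step is to promote $\eta$ to a smooth section. In a local horizontal orthonormal frame $X_1,\dots,X_n$, the principal part of $\square_\varepsilon$ acts on the coefficients of $\eta$ as $\sum_i X_i^2$, the remaining terms being of lower order and not affecting the Hörmander condition; the bracket-generating hypothesis then lets us apply Hörmander's theorem componentwise and conclude that $\eta$ is smooth.

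Next, pair the equation $\square_\varepsilon \eta = \mu\eta$ with the compactly supported test one-form $h_n^2 \eta$, where $(h_n)$ is the exhausting sequence \eqref{exhaustion} provided by completeness. Because $\mathfrak{T}_\mathcal{H}^\varepsilon$ is tensorial, the Leibniz rule gives
\[
(\nabla_\mathcal{H} - \mathfrak{T}_\mathcal{H}^\varepsilon)(h_n^2 \eta) = h_n^2 (\nabla_\mathcal{H} - \mathfrak{T}_\mathcal{H}^\varepsilon)\eta + 2h_n\, \eta\otimes \nabla_\mathcal{H} h_n,
\]
and integrating by parts in the $(\nabla_\mathcal{H} - \mathfrak{T}_\mathcal{H}^\varepsilon)^*(\nabla_\mathcal{H} - \mathfrak{T}_\mathcal{H}^\varepsilon)$ piece produces a cross term which Cauchy--Schwarz absorbs into the resulting $\int h_n^2 \|(\nabla_\mathcal{H} - \mathfrak{T}_\mathcal{H}^\varepsilon)\eta\|_\varepsilon^2\, d\mu$ contribution. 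The curvature bounds $\mathfrak{Ric}_\mathcal{H} \ge -K$ and $-\mathbf{J}^2 \le \kappa\,\mathbf{Id}_\mathcal{H}$ then yield
\[
(\mu - C_0)\int_\M h_n^2 \|\eta\|_\varepsilon^2\, d\mu \;\le\; \|\nabla_\mathcal{H} h_n\|_\infty^2 \int_\M \|\eta\|_\varepsilon^2\, d\mu .
\]
Sending $n\to\infty$, the right-hand side vanishes by \eqref{exhaustion} while monotone convergence on the left gives $(\mu - C_0)\|\eta\|_{L^2(g_\varepsilon)}^2 \le 0$, forcing $\eta = 0$.

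The main obstacle is the smoothness step: one needs to promote a distributional $L^2$-solution to a smooth one via hypoellipticity for the operator $\square_\varepsilon$ acting on one-forms, rather than the scalar hypoellipticity of $\Delta_\mathcal{H}$. This is where the bracket-generating hypothesis on $\mathcal{H}$ enters essentially; once smoothness is secured, the remaining exhaustion/integration-by-parts calculus is routine, and is essentially the same scheme as was used earlier in the paper to prove essential self-adjointness of the scalar operator $\Delta_\mathcal{H}$.
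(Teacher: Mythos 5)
Your proof is correct and follows essentially the same route as the paper's: the Reed--Simon criterion, testing $\square_\varepsilon^*\eta=\mu\eta$ against $h_n^2\eta$ with the exhaustion sequence from completeness, and using the upper bound on $-\frac{1}{\varepsilon}\mathbf{J}^2-\mathfrak{Ric}_{\mathcal H}$ to force $\eta=0$. The only differences are organizational (you absorb the cross term by Cauchy--Schwarz and conclude in one step, whereas the paper first deduces $\nabla_{\mathcal H}\eta-\mathfrak{T}^\varepsilon_{\mathcal H}\eta=0$ and then returns to the eigenvalue equation) and that you make explicit the regularity step for the system, which the paper leaves implicit.
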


\begin{proof}
We consider an increasing sequence $h_n\in C_0^\infty(\M)$, $0 \le h_n \le 1$,  such that $h_n\nearrow 1$ on $\mathbb{M}$, and $||\Gamma (h_n)||_{\infty} \to 0$, as $n\to \infty$.

To prove that $\square_\varepsilon$ is essentially self-adjoint, once again it is enough to prove that for some $\lambda >0$, $\square_\varepsilon \eta =\lambda \eta $ with $\eta \in L^2$ implies $\eta =0$. So, let $\lambda >0$ and $\eta \in  L^2$ such that $\square_\varepsilon \eta =\lambda \eta $. We have then
\begin{align*}
 & \lambda \int_\M h_n^2 \| \eta \|_{\varepsilon}^2 \\
 =& \int_\M  \langle  h_n^2 \eta , \square_\varepsilon \eta \rangle_{ \varepsilon} \\
 =&- \int_\M \langle \nabla_\mathcal{H} (h_n^2 \eta )-\mathfrak{T}_\mathcal{H}^\varepsilon (h_n^2 \eta) , \nabla_\mathcal{H} \eta -\mathfrak{T}_\mathcal{H}^\varepsilon \eta \rangle_{\varepsilon} +\int_\M h_n^2 \left\langle \left(-\frac{1}{ \varepsilon}\mathbf{J}^2 - \mathfrak{Ric}_{\mathcal{H}}\right)(\eta), \eta \right\rangle_{ \varepsilon} \\
 =&-\int_\M h_n^2 \| \nabla_\mathcal{H} \eta -\mathfrak{T}_\mathcal{H}^\varepsilon \eta \|_{\varepsilon}^2 -2\int_\M h_n  \langle \eta, \nabla_{\nabla_\mathcal{H} h_n} \eta \rangle_{\varepsilon} +\int_\M h_n^2 \left\langle \left(-\frac{1}{ \varepsilon}\mathbf{J}^2 - \mathfrak{Ric}_{\mathcal{H}}\right)(\eta), \eta \right\rangle_{ \varepsilon} .
\end{align*}
From our assumptions, the symmetric tensor $-\frac{1}{ \varepsilon}\mathbf{J}^2 - \mathfrak{Ric}_{\mathcal{H}}$ is bounded from above, thus by choosing $\lambda$ big enough, we have
\[
\int_\M h_n^2 \| \nabla_\Ho \eta -\mathfrak{T}_{\Ho}^\varepsilon \eta \|_{\varepsilon}^2 +2\int_\M h_n  \langle \eta, \nabla_{\nabla_\mathcal{H} h_n} \eta \rangle_{\varepsilon} \le 0.
\]
By letting $n\to \infty$, we easily deduce that $\| \nabla_\mathcal{H} \eta -\mathfrak{T}_\mathcal{H}^\varepsilon \eta \|_{\varepsilon}^2=0$ which implies $ \nabla_\mathcal{H} \eta -\mathfrak{T}_\mathcal{H}^\varepsilon \eta=0$. If we come back to the equation $\square_\varepsilon \eta =\lambda \eta $ and the expression of $\square_\varepsilon$, we see that it implies that:
\[
\left( -\frac{1}{ \varepsilon}\mathbf{J}^2 - \mathfrak{Ric}_{\mathcal{H}} \right) (\eta) =\lambda \eta.
\]
Our choice of $\lambda$ forces then $\eta=0$.
\end{proof}

Since $ \square_\varepsilon$ is essentially self-adjoint, it admits a unique self-adjoint extension which generates thanks to the spectral theorem a semigroup $Q^\varepsilon_t=e^{ t \square_\varepsilon}$.   We recall that $P_t=e^{ t \Dh}$ the semigroup generated by $ \Dh$. We have the following commutation property:

\begin{lemma}\label{commu2}
If $f \in C_0^\infty(\M)$, then for every $t \ge 0$,
\[
d P_t f=Q^\varepsilon_t df.
\]
\end{lemma}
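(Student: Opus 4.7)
The plan is to employ a standard semigroup interpolation argument. Fix $f \in C_0^\infty(\M)$ and $t>0$, and introduce the one-form valued map
\[
\Phi(s) = Q^\varepsilon_{t-s}\bigl(dP_s f\bigr), \qquad s \in [0,t],
\]
viewed as a curve in the Hilbert space of $L^2(g_\varepsilon)$ one-forms. The identity to be proved, $dP_t f = Q^\varepsilon_t df$, is precisely $\Phi(0) = \Phi(t)$, so it suffices to show that $\Phi$ is constant on $[0,t]$.

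Assuming the necessary regularity, differentiating in $s$ yields
\[
\frac{d\Phi}{ds}(s) = -\square_\varepsilon Q^\varepsilon_{t-s}(dP_s f) + Q^\varepsilon_{t-s}\bigl(d(\Delta_{\mathcal{H}} P_s f)\bigr).
\]
Since $P_s f$ is smooth by hypoellipticity of $\Delta_{\mathcal{H}}$, Theorem \ref{Bochner} applies pointwise and gives $d(\Delta_{\mathcal{H}} P_s f) = \square_\varepsilon(dP_s f)$. Because $\square_\varepsilon$ commutes with $Q^\varepsilon_{t-s} = e^{(t-s)\square_\varepsilon}$ on its domain by the spectral theorem, the two terms cancel and $\Phi$ is constant, as desired.

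The technical core of the proof is to legitimize the $s$-differentiation above, i.e.\ to show that $dP_s f$ belongs to $L^2(g_\varepsilon)$ and to the domain of $\square_\varepsilon$ for $0<s\le t$, with sufficient continuity in $s$. The natural route is to combine a cutoff argument with the Bochner identity (Theorem \ref{Bochner2}) applied to the one-form $\eta = dP_s f$: testing against $h_n^2\, dP_s f$, using the exhaustion sequence \eqref{exhaustion}, the Yang--Mills hypothesis (which makes $\square_\varepsilon$ symmetric for $g_\varepsilon$), and the lower bounds $\mathfrak{Ric}_{\mathcal{H}} \ge -K$ and $-\mathbf{J}^2 \le \kappa$, one obtains an a priori estimate showing that $\|dP_s f\|_{L^2(g_\varepsilon)}$ grows at most exponentially in $s$. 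A similar energy argument, differentiating once more, controls $\|\square_\varepsilon (dP_s f)\|_{L^2(g_\varepsilon)}$.

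The main obstacle will be this a priori bound on $\|dP_s f\|_{L^2(g_\varepsilon)}$ and its time derivative, because it is what allows one to approximate $dP_s f$ by compactly supported one-forms $h_n\, dP_s f$ (to which the preceding essential self-adjointness lemma directly applies) and to pass to the limit $n \to \infty$, exchanging $\square_\varepsilon$ with $Q^\varepsilon_{t-s}$. Once this approximation is in place the cancellation above becomes an $L^2(g_\varepsilon)$ identity, $\Phi$ is genuinely constant, and the lemma follows.
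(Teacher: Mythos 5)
Your argument is correct in spirit but takes a genuinely different route at the decisive technical step, so it is worth comparing the two. Both proofs rest on the intertwining $d\,\Dh=\square_\varepsilon\, d$ of Theorem \ref{Bochner}; you package the conclusion as the interpolation identity $\frac{d}{ds}Q^\varepsilon_{t-s}(dP_sf)=0$, whereas the paper simply observes that $\alpha_t=dP_tf$ and $\eta_t=Q^\varepsilon_t df$ solve the same one-form heat equation with the same initial data and invokes uniqueness of $L^2$ solutions, which is exactly what the essential self-adjointness of $\square_\varepsilon$ provides. The real divergence is in how one shows that $dP_tf$ is an admissible competitor. You propose a cutoff/energy estimate via the Bochner identity of Theorem \ref{Bochner2} and the exhaustion sequence; this is viable but heavier than needed, and note that your interpolation formulation actually demands more than the paper's, namely that $dP_sf$ lie in the \emph{domain} of $\square_\varepsilon$ with enough $s$-regularity to justify the derivative of $\Phi$, not merely in $L^2$. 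The paper instead exploits the totally geodesic structure once more: since the leaves are totally geodesic, $\Dh$ commutes with the full Laplace--Beltrami operator $\Delta=\Dh+\Dv$, and essential self-adjointness upgrades this to the semigroup commutation $e^{s\Dh}e^{t\Delta}=e^{t\Delta}e^{s\Dh}$, whence $\Delta P_tf=P_t\Delta f\in L^2$ and therefore $\int_\M\|\nabla P_tf\|^2\,d\mu=-\int_\M P_tf\,\Delta P_tf\,d\mu<\infty$, i.e.\ $dP_tf\in L^2(g_\varepsilon)$ with no Gronwall argument at all. Your route buys robustness (it would survive in settings where the vertical Laplacian is not available or does not commute), at the price of a longer a priori estimate and the extra domain verification; the paper's route is shorter but leans specifically on the B\'erard-Bergery--Bourguignon commutation afforded by the totally geodesic foliation. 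If you keep your version, you must actually carry out the domain estimate you defer, since without it the differentiation of $\Phi$ is only formal.
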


\begin{proof}
Let $\eta_t =Q^\varepsilon_t df$. By essential self-adjointness, it is the unique solution in $L^2$ of the heat equation
\[
\frac{\partial \eta}{\partial t}=  \square_\varepsilon \eta,
\]
with initial condition $\eta_0 =df$. From the fact that
\[
dL=\square_\varepsilon d,
\]
we see that $\alpha_t=dP_t f$ solves the heat equation
\[
\frac{\partial \alpha}{\partial t}=  \square_\varepsilon \alpha
\]
with the same initial condition $\alpha_0=df$. In order to conclude, we thus just need to prove that for every $t \ge 0$, $dP_tf$ is in $L^2$ . As usual, we denote by $\Dh$ the vertical  Laplacian. The Laplace-Beltrami operator of $\M$ is therefore $\Delta=\Dh+\Dv$. Since the leaves are totally geodesic, $\Delta$ commutes with $\Dh$ on $C^2$ functions. Moreover from the spectral theorem, $\Dh e^{t \Delta}$ maps $C_0^\infty(\M)$ into $L^2(\M,\mu)$. We deduce by essential self-adjointness that $ \Dh e^{t \Delta}=e^{t \Delta} \Dh$. Similarly we obtain $ e^{s\Dh} e^{t \Delta}=e^{t \Delta} e^{s\Dh}$ which implies $\Delta e^{s\Dh}=e^{s\Dh} \Delta$. As a consequence we have that for every $t \ge 0$, $dP_tf$ is in $L^2$.
\end{proof}

\subsection{Stochastic completeness}

We can now give an important corollary of the commutation of Lemma \ref{commu2}.

\begin{theorem}\label{complete}
For every $\varepsilon >0$, $ t\ge 0, x\in \M $ and $ f \in C_0^\infty(\M)$,
\[
\| dP_t f (x) \|_\varepsilon \le e^{\left( K +\frac{\kappa}{\varepsilon} \right) t} P_t \| d f \|_\varepsilon (x).
\]
\end{theorem}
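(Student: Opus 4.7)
The plan is to interpolate the bound along the heat flow and exploit the Bochner--Weitzenb\"ock identity of Theorem \ref{Bochner2} in its Yang--Mills form. Working with $\|\cdot\|_\varepsilon$ directly would be awkward at points where $dP_sf$ vanishes, so I would instead introduce, for $\delta>0$ small, the regularized norm $F_\delta(\eta):=\sqrt{\|\eta\|_\varepsilon^2+\delta}$ and set
\[
\Phi(s):=P_{t-s}\bigl(F_\delta(dP_sf)\bigr)(x),\qquad s\in[0,t],
\]
so that $\Phi(0)=P_t(F_\delta(df))(x)$ and $\Phi(t)=F_\delta(dP_tf)(x)$. The goal is the differential inequality $\Phi'(s)\le(K+\kappa/\varepsilon)\,\Phi(s)$; Gronwall then gives $\Phi(t)\le e^{(K+\kappa/\varepsilon)t}\Phi(0)$, and sending $\delta\to0^+$ by dominated convergence yields the claim.

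To compute $\Phi'(s)$, use the commutation $dP_sf=Q^\varepsilon_s df$ of Lemma \ref{commu2}, so that $\partial_s(dP_sf)=\square_\varepsilon(dP_sf)$; writing $\eta_s:=dP_sf$ and applying the chain rule to $F_\delta$ gives
\[
\Phi'(s)=P_{t-s}\!\left(\frac{\|\nabla_\mathcal{H}\|\eta_s\|_\varepsilon^{2}\|_\mathcal{H}^{2}}{4F_\delta(\eta_s)^{3}}-\frac{1}{F_\delta(\eta_s)}\!\left(\tfrac{1}{2}\Dh\|\eta_s\|_\varepsilon^{2}-\langle\square_\varepsilon\eta_s,\eta_s\rangle_\varepsilon\right)\right).
\]
Theorem \ref{Bochner2}, in the Yang--Mills case where $\delta_\mathcal{H}T=0$, identifies the second bracket as $\|\nabla_\mathcal{H}\eta_s-\mathfrak{T}^\varepsilon_\mathcal{H}\eta_s\|_\varepsilon^{2}+\langle(\mathfrak{Ric}_\mathcal{H}+\tfrac{1}{\varepsilon}\mathbf{J}^2)\eta_s,\eta_s\rangle_\mathcal{H}$, whose curvature part is bounded below by $-(K+\kappa/\varepsilon)\|\eta_s\|_\mathcal{H}^{2}$ under the standing hypotheses.

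The crux is to cancel the chain-rule term $\|\nabla_\mathcal{H}\|\eta\|_\varepsilon^2\|_\mathcal{H}^2/(4F_\delta^{3})$ against the positive square $\|\nabla_\mathcal{H}\eta-\mathfrak{T}^\varepsilon_\mathcal{H}\eta\|_\varepsilon^2/F_\delta$ coming from Bochner. Since $\mathfrak{T}_X^\varepsilon$ is $g_\varepsilon$-skew-symmetric, for every horizontal $X$,
\[
X(\|\eta\|_\varepsilon^{2})=2\langle\nabla_X\eta,\eta\rangle_\varepsilon=2\langle\nabla_X\eta-\mathfrak{T}^\varepsilon_X\eta,\eta\rangle_\varepsilon,
\]
so summing the square over an orthonormal horizontal frame and applying Cauchy--Schwarz fiberwise gives
\[
\|\nabla_\mathcal{H}\|\eta\|_\varepsilon^{2}\|_\mathcal{H}^{2}\le 4\,\|\eta\|_\varepsilon^{2}\,\|\nabla_\mathcal{H}\eta-\mathfrak{T}^\varepsilon_\mathcal{H}\eta\|_\varepsilon^{2}.
\]
Since $\|\eta\|_\varepsilon^{2}\le F_\delta(\eta)^{2}$, the chain-rule term is dominated by $\|\nabla_\mathcal{H}\eta-\mathfrak{T}^\varepsilon_\mathcal{H}\eta\|_\varepsilon^{2}/F_\delta(\eta)$, exactly cancelling the Bochner square. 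What remains is only the curvature contribution, which is controlled via $\|\eta\|_\mathcal{H}^{2}\le\|\eta\|_\varepsilon^{2}\le F_\delta(\eta)^{2}$:
\[
\Phi'(s)\le P_{t-s}\!\left(\frac{(K+\kappa/\varepsilon)\|\eta_s\|_\mathcal{H}^{2}}{F_\delta(\eta_s)}\right)\le(K+\kappa/\varepsilon)\,\Phi(s).
\]

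The main obstacle is precisely the cancellation just described; it is the $g_\varepsilon$-skew-symmetry of $\mathfrak{T}^\varepsilon$ (rather than any direct bound on $\|\nabla_\mathcal{H}\eta\|$) that allows one to upgrade the naive Cauchy--Schwarz inequality $\|\nabla_\mathcal{H}\|\eta\|^2\|^2\le 4\|\eta\|^2\|\nabla_\mathcal{H}\eta\|^2$ to one that reproduces the very square appearing in Bochner, making all the strong off-diagonal terms drop out. Secondary technical points -- justifying the differentiation under $P_{t-s}$ and the limit $\delta\to0^+$ -- are standard and rely on hypoellipticity of $\Dh$ (so $P_sf$ is smooth in $x$), the $L^2$-estimates on $Q^\varepsilon_s df$ that underlie Lemma \ref{commu2}, and, if needed, a cutoff along the exhaustion $h_n$ from \eqref{exhaustion}.
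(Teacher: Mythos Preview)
Your argument is correct and represents a genuinely different route from the paper's. The paper proceeds probabilistically: it writes $Q^\varepsilon_t$ via a Feynman--Kac formula involving a multiplicative functional $\tau^\varepsilon_t$ solving a covariant Stratonovitch SDE along the diffusion generated by $\tfrac{1}{2}\Dh$, bounds $\|\tau^\varepsilon_t\|_\varepsilon$ pathwise by Gronwall using the same curvature bounds, and then takes expectations. Your approach is the analytic counterpart: interpolate $P_{t-s}(F_\delta(dP_sf))$ and use the Bochner--Weitzenb\"ock identity of Theorem~\ref{Bochner2} together with the $g_\varepsilon$-skew-symmetry of $\mathfrak{T}^\varepsilon$ to cancel the chain-rule term against the Bochner square. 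What your approach buys is that it stays entirely within the PDE/semigroup framework already developed in the notes and avoids introducing stochastic calculus; what the paper's approach buys is that the pathwise Gronwall estimate is immediate and the justification of differentiating under $P_{t-s}$ (which in your argument must be handled by cutoffs, since stochastic completeness is only proved \emph{after} this theorem) is sidestepped. Both are standard and equivalent implementations of the same underlying curvature bound.
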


\begin{proof}
The idea is to use the Feynman-Kac  stochastic representation of $Q_t^{\varepsilon}$. We denote by $(X_t)_{t \ge 0}$ the symmetric diffusion process generated by $\frac{1}{2} L$ and denote by $\mathbf{e}$ its lifetime.
Consider the process $\tau_t^\varepsilon:T^*_{X_t} \M \to T^*_{X_0} \M  $ which is the solution of  the following covariant Stratonovitch stochastic differential equation:
\begin{align}\label{tau}
d \left[ \tau^\varepsilon_t \alpha (X_t)\right]= \tau^\varepsilon_t \left( \nabla_{\circ dX_t}  -\mathfrak{T}^\varepsilon_{\circ dX_t} - \frac{1}{2} \left( \frac{1}{ \varepsilon} \mathbf{J}^2 +\mathfrak{Ric}_{\mathcal{H}} \right) dt \right)\alpha (X_t), \quad \tau^\varepsilon_0=\mathbf{Id},
\end{align}
where $\alpha$ is any smooth one-form. By using Gronwall's lemma, we have for every $ t\ge 0$,
\[
\| \tau^\varepsilon_t \alpha (X_t) \|_{ \varepsilon} \le e^{\frac{1}{2}\left( K+\frac{\kappa}{ \varepsilon} \right)t} \| \alpha (X_t) \|_{ \varepsilon}.
\]
By the Feynman-Kac formula, we have for every  smooth and compactly supported one-form
\[
Q_{t/2} \eta (x)=\mathbb{E}_x \left( \tau_t \eta (X_t) \mathbf{1}_{t < \mathbf{e}} \right).
\]
Since $dP_t=Q_t^\varepsilon d$, it follows easily that
\[
\| dP_t f (x) \|_\varepsilon \le e^{\left( K +\frac{\kappa}{\varepsilon} \right) t} P_t \| d f \|_\varepsilon (x).
\]
\end{proof}

It is well-known that this type of gradient bound implies the stochastic completeness of $P_t$. More precisely, adapting  an argument of Bakry \cite{Bak} yields the following result. 
\begin{theorem}\label{T:sc}
 For $t \ge 0$, one has $ P_t 1 =1$.
\end{theorem}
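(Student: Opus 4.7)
The strategy is to combine the gradient bound of Theorem \ref{complete} with an exhaustion argument to first show that $P_t 1$ is constant in space, and then use a short semigroup computation to identify this constant as $1$.

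By the completeness of $g$ (see \eqref{exhaustion}), pick an exhaustion $h_n\in C_0^\infty(\M)$ with $0\le h_n\le 1$, $h_n\nearrow 1$ pointwise, and
$\|\nabla_\Ho h_n\|_\infty+\|\nabla_\V h_n\|_\infty \to 0$.
For a fixed $\ep>0$ this yields $\|dh_n\|_\ep^\infty\to 0$. Applying Theorem \ref{complete} to $h_n$ and using the sub-Markov property $P_t1\le 1$ (equivalently, $L^\infty$-contractivity of $P_t$),
\[
\|dP_t h_n\|_\ep (x) \;\le\; e^{(K+\kappa/\ep)t}\, P_t\|dh_n\|_\ep(x)\;\le\; e^{(K+\kappa/\ep)t}\,\|dh_n\|_\ep^\infty \;\longrightarrow\; 0,
\]
uniformly in $x\in\M$. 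In particular $\|\nabla_\Ho P_t h_n\|_\infty$ and $\|\nabla_\V P_t h_n\|_\infty$ both tend to $0$, so the full Riemannian gradient of $P_t h_n$ vanishes uniformly as $n\to\infty$.

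Next, by monotone convergence, $P_t h_n(x)\nearrow P_t 1(x)$ for every $x$. Using the Riemannian completeness of $(\M,g)$, for any $x,y\in\M$ pick a smooth path $\gamma$ connecting them; then $|P_t h_n(x)-P_t h_n(y)|\le \|\nabla P_t h_n\|_\infty\,\mathrm{length}(\gamma)\to 0$, and passing to the limit gives $P_t 1(x)=P_t 1(y)$. Thus $P_t 1\equiv c(t)$ depends only on $t$. To identify $c(t)$, set
\[
\Phi(s,x)\;:=\;P_s\!\left((P_{t-s}1)^2\right)(x),\qquad s\in[0,t].
\]
Using $\partial_s P_s f_s = P_s(\Dh f_s+\partial_s f_s)$ and the product rule $\Dh(u^2)=2u\,\Dh u+2\Gamma(u)$, one obtains the standard identity $\partial_s\Phi(s,x)=2P_s(\Gamma(P_{t-s}1))(x)$. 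Since $P_{t-s}1=c(t-s)$ is spatially constant, $\Gamma(P_{t-s}1)=0$ and hence $\partial_s\Phi\equiv 0$. Therefore
\[
c(t)^2 \;=\; \Phi(0,x) \;=\; \Phi(t,x) \;=\; c(t),
\]
so $c(t)\in\{0,1\}$. Finally, $P_t 1(x)\ge P_t h_n(x)\to h_n(x)$ as $t\to 0^+$, and $h_n(x)\to 1$, so $c(0^+)=1$; combined with continuity of $t\mapsto c(t)$ on $(0,\infty)$ (smoothness of the heat kernel) this forces $c\equiv 1$.

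The main delicate point is the passage from the uniform $L^\infty$-decay of $\nabla P_t h_n$ to the spatial constancy of the limit $P_t 1$: it requires both the Riemannian completeness of $(\M,g)$, so that any two points may be joined by a finite-length path along which gradients integrate, and the fact that Theorem \ref{complete} controls the \emph{full} $\ep$-gradient (horizontal plus $\sqrt{\ep}$ times vertical) rather than just the horizontal part, which is what makes the pointwise equality $P_t1(x)=P_t1(y)$ available. Once constancy in space is in hand, the $\Phi$-computation and continuity at $t=0^+$ make the identification $c\equiv 1$ essentially automatic.
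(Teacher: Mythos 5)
Your argument is correct, but it follows a genuinely different route from the paper. The paper's proof is a dual one: it writes $\int_\M (P_t f-f)g\,d\mu=-\int_0^t\int_\M \Gamma(P_sf,g)\,d\mu\,ds$ for $g\in C_0^\infty(\M)$, bounds the right-hand side via Cauchy--Schwarz and the gradient estimate of Theorem \ref{complete}, applies this with $f=h_n$ and lets $n\to\infty$ to get $\int_\M(P_t1-1)g\,d\mu=0$ for every $g$. You instead work pointwise: the same gradient bound plus sub-Markovianity gives uniform decay of the full $\varepsilon$-gradient of $P_th_n$, whence $P_t1$ is spatially constant (here you correctly use completeness of $g$ to join points by finite-length paths, and the fact that for fixed $\varepsilon>0$ the $\varepsilon$-norm controls both the horizontal and vertical gradients), and you then identify the constant. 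Your route is more geometric and makes the role of completeness explicit, but it quietly relies on two regularity facts the paper never establishes and which your ``standard identity'' glosses over: (i) that $u(t,x)=P_t1(x)$, i.e.\ the $L^\infty$-extension of the semigroup applied to the constant $1$, is a genuine smooth solution of $\partial_tu=\Dh u$ (this follows from monotone limits of the solutions $P_th_n$ together with hypoellipticity of $\partial_t-\Dh$, but it is exactly what makes $\partial_s\bigl[(P_{t-s}1)^2\bigr]=-2P_{t-s}1\,\Dh P_{t-s}1$ legitimate); and (ii) pointwise convergence $P_th_n(x)\to h_n(x)$ as $t\to0^+$, used to rule out $c\equiv0$. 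Note also that once (i) is granted and $P_\tau1\equiv c(\tau)$ is known to be spatially constant, you get $c'(\tau)=\Dh P_\tau1=0$ directly, so the auxiliary functional $\Phi$ and the relation $c^2=c$ are redundant. The paper's dual argument avoids both regularity issues, which is what makes it shorter; yours is still a valid and instructive alternative provided you supply (i) and (ii).
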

\begin{proof}
Let $f,g \in  C^\infty_0(\mathbb M)$,  we have
\begin{align*}
\int_{\bM} (P_t f -f) g d\mu & = \int_0^t \int_{\bM}\left(
\frac{\partial}{\partial s} P_s f \right) g d\mu ds \\
 & = \int_0^t
\int_{\bM}\left(\Dh P_s f \right) g d\mu ds \\
 & =- \int_0^t \int_{\bM} \Gamma ( P_s f , g) d\mu ds.
\end{align*}
By means of  Cauchy-Schwarz inequality we
find
\begin{equation}\label{P78}
\left| \int_{\bM} (P_t f -f) g d\mu \right| \le \left(\int_0^t
e^{\left( K +\frac{\kappa}{\varepsilon} \right) s}  ds\right) \sqrt{ \| \Gamma (f) \|_\infty +\varepsilon \|
\Gamma^\V (f) \|_\infty } \int_{\bM}\Gamma (g)^{\frac{1}{2}}d\mu.
\end{equation}
We now apply \eqref{P78} with $f = h_n$, where $h_n$ is an increasing sequence in $ C_0^\infty(\M)$, $0 \le h_n \le 1$,  such that $h_n\nearrow 1$ on $\mathbb{M}$, and $||\Gamma (h_n)||_{\infty} \to 0$, as $n\to \infty$.

By  monotone convergence theorem we have $P_t h_k(x)\nearrow P_t 1(x)$ for every $x\in \bM$. We conclude that the
left-hand side of \eqref{P78} converges to $\int_{\bM} (P_t 1 -1) g d\mu$. Since the right-hand side converges to zero, we reach the conclusion
\[
\int_{\bM} (P_t 1 -1) g d\mu=0,\ \ \ g\in C^\infty_0(\bM).
\]
Since it is true for every $ g\in C^\infty_0(\bM)$, it follows that $P_t 1 =1$.
\end{proof}

It is classical and easy to prove that stochastic completeness implies  the parabolic comparison principle  below.

\begin{proposition}\label{P:missing_key2}
Let $T>0$. Let $u,v: \mathbb{M}\times [0,T] \to \mathbb{R}$ be  smooth functions such that  for every $T>0$,  $\sup_{t \in [0,T]} \| u(\cdot,t)\|_\infty <\infty$, $\sup_{t \in [0,T]} \| v(\cdot,t)\|_\infty <\infty$; If the  inequality 
\[
\Dh u+\frac{\partial u}{\partial t} \ge v
\]
holds on $\mathbb{M}\times [0,T]$, then we have
\[
P_T(u(\cdot,T))(x) \ge u(x,0) +\int_0^T P_s(v(\cdot,s))(x) ds.
\]

\end{proposition}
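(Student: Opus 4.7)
The plan is to introduce the single-variable interpolation $\phi(s) = P_s(u(\cdot,s))(x)$ for a fixed $x \in \M$ and $s \in [0,T]$, show that $\phi$ is differentiable with derivative
\[
\phi'(s) = P_s\bigl((\Delta_{\mathcal{H}} u + \partial_s u)(\cdot,s)\bigr)(x),
\]
and then invoke the hypothesis $\Delta_{\mathcal{H}} u + \partial_t u \ge v$ together with positivity of $P_s$ to conclude
\[
\phi'(s) \ge P_s\bigl(v(\cdot,s)\bigr)(x).
\]
Integrating this inequality from $0$ to $T$ gives $\phi(T) - \phi(0) \ge \int_0^T P_s(v(\cdot,s))(x)\, ds$, which rearranges to the stated bound.

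The first thing to check is that $\phi$ makes sense at all: since $u(\cdot,s)$ is only assumed bounded (not in $L^2$), one cannot apply the spectrally defined semigroup directly. Here we exploit Theorem \ref{T:sc}: stochastic completeness $P_t 1 = 1$, combined with positivity, lets us extend $P_s$ uniquely to a Markov contraction on $L^\infty(\M)$, so $P_s(u(\cdot,s))(x)$ is defined as a bounded smooth function of $x$ (smoothness via hypoellipticity of the heat operator) and jointly smooth in $(s,x)$.

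The key computation is the derivative. Formally, $\frac{\partial}{\partial s} P_s f = \Delta_{\mathcal{H}} P_s f = P_s \Delta_{\mathcal{H}} f$, hence
\[
\phi'(s) = P_s(\Delta_{\mathcal{H}} u(\cdot,s))(x) + P_s(\partial_s u(\cdot,s))(x).
\]
Justifying this in the $L^\infty$ setting is the main obstacle: on compactly supported test functions it is standard, but $u(\cdot,s)$ is only bounded, so the commutation $\Delta_{\mathcal{H}} P_s f = P_s \Delta_{\mathcal{H}} f$ has to be obtained by cutoff. One approximates $u(\cdot,s)$ by $h_n u(\cdot,s)$, where $(h_n) \subset C_0^\infty(\M)$ is the exhaustion satisfying \eqref{exhaustion}, applies the commutation on the compactly supported level, and uses the uniform bounds on $u$, $\partial_s u$ and $\Delta_{\mathcal{H}} u$ together with $\|\nabla_{\mathcal{H}} h_n\|_\infty \to 0$ and the dominated convergence provided by stochastic completeness ($P_s 1 = 1$ gives $P_s(h_n f) \to P_s f$ for bounded $f$) to pass to the limit. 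Differentiability of $\phi$ is then obtained by the same approximation combined with the boundedness hypothesis on $\partial_s u$ (through the equation $\partial_s u \ge v - \Delta_{\mathcal{H}} u$, both sides uniformly bounded on $[0,T]$).

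Once this is established, positivity of $P_s$ (again from the Markov property) turns the inequality $\Delta_{\mathcal{H}} u + \partial_s u \ge v$ pointwise into $\phi'(s) \ge P_s(v(\cdot,s))(x)$, and integrating over $[0,T]$ finishes the proof. Thus the substantive work is not in the comparison step itself but in the cutoff argument that legitimises differentiating $\phi$ under purely sup-norm assumptions; this is exactly where the stochastic completeness proved in Theorem \ref{T:sc} is indispensable.
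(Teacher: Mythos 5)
The paper offers no proof of Proposition \ref{P:missing_key2}; it simply asserts that the statement is a classical consequence of stochastic completeness. Your interpolation $\phi(s)=P_s(u(\cdot,s))(x)$ is exactly the standard argument being alluded to, and the overall architecture --- extend $P_s$ to $L^\infty$ via positivity and $P_t1=1$, differentiate $\phi$, apply positivity of $P_s$ to the differential inequality, integrate from $0$ to $T$ --- is the right one.

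One step, however, is not justified as written. You claim that $\partial_s u$ is bounded ``through the equation $\partial_s u \ge v - \Dh u$, both sides uniformly bounded on $[0,T]$''. The hypothesis is a one-sided inequality on the \emph{sum} $\Dh u+\partial_s u$; it bounds neither $\partial_s u$ nor $\Dh u$ separately, and $\Dh u$ is nowhere assumed bounded. Consequently the dominated-convergence argument that legitimises $\phi'(s)=P_s\bigl((\Dh u+\partial_s u)(\cdot,s)\bigr)(x)$ --- both the difference quotient in the time slot of $u$ and the commutation $\partial_s P_sf=P_s\Dh f$ for the non-compactly-supported $f=u(\cdot,s)$ --- needs additional control, e.g.\ $\sup_{t\in[0,T]}\|\partial_t u(\cdot,t)\|_\infty<\infty$ together with $\sup_{t\in[0,T]}\|\Gamma(u(\cdot,t))\|_\infty<\infty$ (or uniform bounds on $\Dh u$). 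These are not literally among the proposition's hypotheses, but they do hold in every application the paper makes of it, so the fix is to add them to the statement rather than to change the proof. A second, smaller point: the exhaustion \eqref{exhaustion} controls only $\|\nabla_{\mathcal{H}}h_n\|_\infty$ and $\|\nabla_{\mathcal{V}}h_n\|_\infty$, not $\|\Dh h_n\|_\infty$, so in the cutoff step you should pass through the Dirichlet form (write $\int g\,\Dh(h_nu)\,d\mu=-\int\Gamma(g,h_nu)\,d\mu$ and integrate by parts once more, so that only $\Gamma(h_n)$ terms survive) rather than expand $\Dh(h_nu)$ and hope to discard the $u\,\Dh h_n$ term.
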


\subsection{Li-Yau estimates}

We show in this section how to obtain the Li-Yau estimate which is a crucial ingredient to prove the Bonnet-Myers theorem.

Henceforth, we will indicate $C_b^\infty(\mathbb M) = C^\infty(\M)\cap L^\infty(\M)$. A key lemma is the following.

\begin{lemma}\label{L:derivatives}
Let $f \in C^\infty_b(\mathbb{M})$, $f > 0$ and $T>0$, and consider the functions
\[
\phi_1 (x,t)=(P_{T-t} f) (x)\Gamma (\ln P_{T-t}f)(x),
\]
\[
\phi_2 (x,t)= (P_{T-t} f)(x) \Gamma^{\mathcal{V}} (\ln P_{T-t}f)(x),
\]
which are defined on $\M\times [0,T)$.  We have
\[
\Dh \phi_1+\frac{\partial \phi_1}{\partial t} =2 (P_{T-t} f) \Gamma_2 (\ln P_{T-t}f). 
\]
and
\[
\Dh \phi_2+\frac{\partial \phi_2}{\partial t} =2 (P_{T-t} f) \Gamma_2^{\mathcal{V}} (\ln P_{T-t}f).
\]
\end{lemma}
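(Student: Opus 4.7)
The plan is to set $u = P_{T-t}f$ (so that $u>0$ and $\partial_t u = -\Delta_{\mathcal{H}} u$) and $g = \ln u$. By the chain rule for the diffusion operator $\Delta_{\mathcal{H}}$,
$$\Delta_{\mathcal{H}} g = \frac{\Delta_{\mathcal{H}} u}{u} - \frac{\Gamma(u)}{u^2} = \frac{\Delta_{\mathcal{H}} u}{u} - \Gamma(g),$$
and combined with $\partial_t u = -\Delta_{\mathcal{H}} u$ this gives the key evolution equation
$$\partial_t g = -\Delta_{\mathcal{H}} g - \Gamma(g). \qquad (\star)$$
Note also that since $u=e^g$, the chain rule yields $\Gamma(u,h) = u\,\Gamma(g,h)$ for any smooth $h$.

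For $\phi_1 = u\,\Gamma(g)$, I would expand $\Delta_{\mathcal{H}}\phi_1$ using the Leibniz-type identity
$$\Delta_{\mathcal{H}}(uF) = (\Delta_{\mathcal{H}} u)F + 2\Gamma(u,F) + u\,\Delta_{\mathcal{H}} F$$
with $F=\Gamma(g)$, obtaining
$$\Delta_{\mathcal{H}}\phi_1 = (\Delta_{\mathcal{H}} u)\Gamma(g) + 2u\,\Gamma(g,\Gamma(g)) + u\,\Delta_{\mathcal{H}}\Gamma(g).$$
On the other hand, using $(\star)$,
$$\partial_t \phi_1 = (\partial_t u)\Gamma(g) + 2u\,\Gamma(g,\partial_t g) = -(\Delta_{\mathcal{H}} u)\Gamma(g) - 2u\,\Gamma(g,\Delta_{\mathcal{H}} g) - 2u\,\Gamma(g,\Gamma(g)).$$
Adding, the $(\Delta_{\mathcal{H}} u)\Gamma(g)$ and the $2u\,\Gamma(g,\Gamma(g))$ terms both cancel, leaving
$$\Delta_{\mathcal{H}}\phi_1 + \partial_t\phi_1 = u\bigl(\Delta_{\mathcal{H}}\Gamma(g) - 2\Gamma(g,\Delta_{\mathcal{H}} g)\bigr) = 2u\,\Gamma_2(g),$$
by the definition of $\Gamma_2$.

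For $\phi_2 = u\,\Gamma^{\mathcal{V}}(g)$ the same strategy works with $F=\Gamma^{\mathcal{V}}(g)$, producing the analogous expression
$$\Delta_{\mathcal{H}}\phi_2 + \partial_t\phi_2 = u\bigl(\Delta_{\mathcal{H}}\Gamma^{\mathcal{V}}(g) - 2\Gamma^{\mathcal{V}}(g,\Delta_{\mathcal{H}} g)\bigr) + 2u\bigl(\Gamma(g,\Gamma^{\mathcal{V}}(g)) - \Gamma^{\mathcal{V}}(g,\Gamma(g))\bigr).$$
The first bracket is exactly $2\Gamma^{\mathcal{V}}_2(g)$ by definition. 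The crucial point is that the second bracket vanishes: this is precisely the intertwining identity
$$\langle\nabla_{\mathcal{H}} g,\nabla_{\mathcal{H}}\|\nabla_{\mathcal{V}} g\|^2\rangle = \langle\nabla_{\mathcal{V}} g,\nabla_{\mathcal{V}}\|\nabla_{\mathcal{H}} g\|^2\rangle$$
furnished by Theorem~\ref{inter_commutation}, which holds thanks to the totally geodesic assumption on the foliation. This is the only non-routine step in the proof; everything else is bookkeeping with the chain/Leibniz rules. With this identity in hand the formula for $\phi_2$ follows, completing the argument.
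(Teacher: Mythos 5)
Your computation is correct and is exactly the "direct computation" the paper alludes to: the product rule for $\Dh$ plus the evolution equation $\partial_t g=-\Dh g-\Gamma(g)$ makes the cross terms cancel for $\phi_1$, and for $\phi_2$ the leftover $2u\bigl(\Gamma(g,\Gamma^{\mathcal V}(g))-\Gamma^{\mathcal V}(g,\Gamma(g))\bigr)$ is killed by the intertwining identity of Theorem \ref{inter_commutation}, which is precisely the point the paper singles out as using the totally geodesic hypothesis. Nothing is missing.
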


\begin{proof}
This is direct computation without trick. Let us just point out that the formula
\[
\Dh \phi_2+\frac{\partial \phi_2}{\partial t} =2 (P_{T-t} f) \Gamma_2^{\mathcal{V}} (\ln P_{T-t}f).
\]
uses the fact that $\Gamma(g , \Gamma^\V (g))=\Gamma^\V (g , \Gamma (g))$ and thus that the foliation is totally geodesic.
\end{proof}

We now show how to prove the Li-Yau estimates for the horizontal semigroup. The method we use is adapted from \cite{BL2}

\begin{theorem}\label{T:ge}
Let $\alpha >2$. For  $f \in C_0^\infty(\M)$, $f  \ge 0$, $f \neq 0$, the following inequality holds for $t>0$:
\begin{align*}
 & \Gamma (\ln P_t f) +\frac{2 \rho_2}{\alpha}  t \Gamma^\mathcal{V} (\ln P_t f) \\
  \le & \left(1+\frac{\alpha \kappa}{(\alpha-1)\rho_2}-\frac{2\rho_1}{\alpha} t\right)
\frac{\Dh P_t f}{P_t f} +\frac{n\rho_1^2}{2\alpha} t-\frac{\rho_1 n}{2}\left(
1+\frac{\alpha \kappa}{(\alpha-1)\rho_2}\right) +\frac{n(\alpha-1)^2\left(
1+\frac{\alpha \kappa}{(\alpha-1)\rho_2}\right)^2}{8(\alpha-2)t}.
\end{align*}
\end{theorem}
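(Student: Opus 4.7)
The plan is to run a parabolic entropy argument in the spirit of Baudoin--Garofalo. Fix $T>0$, replace $f$ by $f_\epsilon=f+\epsilon$ so that $P_t f_\epsilon \geq \epsilon$ (legitimate by stochastic completeness, Theorem~\ref{T:sc}), and set $u(x,t)=P_{T-t}f_\epsilon(x)$, $g=\ln u$. I would introduce the time-weighted functional
\[
\phi(x,t) = a(t)\,u\,\Gamma(g) + b(t)\,u\,\Gamma^{\mathcal V}(g),
\]
with the specific choice $a(t)=(1-t/T)^{\alpha-1}$ and $b(t)=\tfrac{2\rho_2 T}{\alpha}(1-t/T)^{\alpha}$, tailored so that $a(T)=b(T)=0$, $a(0)=1$, $b(0)=\tfrac{2\rho_2 T}{\alpha}$, and $b'(t)+2\rho_2 a(t)=0$.

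Combining Lemma~\ref{L:derivatives} with the product rule in $t$ gives
\[
(\Dh+\partial_t)\phi = u\left[a'\Gamma(g) + b'\Gamma^{\mathcal V}(g) + 2a\,\Gamma_2(g) + 2b\,\Gamma_2^{\mathcal V}(g)\right].
\]
I then apply the generalized curvature dimension inequality Theorem~\ref{CD} with free parameter $\varepsilon=b/a$ to lower bound $2a\Gamma_2(g)+2b\Gamma_2^{\mathcal V}(g)$, and complete the square on the resulting $\tfrac{2a}{n}(\Dh g)^2$ term via $\tfrac{2a}{n}X^2 \geq 2\gamma X-\tfrac{n\gamma^2}{2a}$ (applied with $X=\Dh g$), followed by the identity $\Dh g=\Dh u/u-\Gamma(g)$. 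This produces
\[
(\Dh+\partial_t)\phi \geq u\bigl(a'+2\rho_1 a-\tfrac{2\kappa a^2}{b}-2\gamma\bigr)\Gamma(g) + u(b'+2\rho_2 a)\Gamma^{\mathcal V}(g) + 2\gamma\,\Dh u - \tfrac{n\gamma^2}{2a}u.
\]
The choice of $b$ kills the coefficient of $\Gamma^{\mathcal V}(g)$, and I then pick $\gamma(t)=\tfrac{1}{2}\bigl(a'+2\rho_1 a-\tfrac{2\kappa a^2}{b}\bigr)$ to kill the coefficient of $\Gamma(g)$, leaving the pointwise inequality $(\Dh+\partial_t)\phi \geq 2\gamma\,\Dh u - \tfrac{n\gamma^2}{2a}u$.

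Applying the parabolic comparison principle Proposition~\ref{P:missing_key2}, together with the commutations $P_s\Dh P_{T-s}f_\epsilon = \Dh P_T f_\epsilon$ and $P_sP_{T-s}f_\epsilon=P_T f_\epsilon$, I obtain
\[
P_T\bigl(\phi(\cdot,T)\bigr) \geq \phi(\cdot,0) + 2\left(\int_0^T \gamma(s)\,ds\right)\Dh P_T f_\epsilon - \left(\int_0^T \tfrac{n\gamma(s)^2}{2a(s)}\,ds\right) P_T f_\epsilon.
\]
Since $a(T)=b(T)=0$ the left-hand side vanishes, and $\phi(\cdot,0) = P_T f_\epsilon\bigl[\Gamma(\ln P_T f_\epsilon)+\tfrac{2\rho_2 T}{\alpha}\Gamma^{\mathcal V}(\ln P_T f_\epsilon)\bigr]$. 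Dividing by $P_T f_\epsilon$ yields the claim once the two time integrals are computed by the substitution $v=1-s/T$, a direct calculation giving $-2\int_0^T\gamma\,ds = 1+\tfrac{\alpha\kappa}{(\alpha-1)\rho_2}-\tfrac{2\rho_1 T}{\alpha}$ and the explicit three-term constant of the statement. A final $\epsilon\downarrow 0$ argument, using dominated convergence and hypoellipticity of $\Dh$, removes the regularization.

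The main obstacle I expect is the rigorous justification of the parabolic comparison principle at this level of generality. The coefficient $\tfrac{n\gamma^2}{2a}$ behaves like $(1-t/T)^{\alpha-3}$ near $t=T$, which is precisely why the hypothesis $\alpha>2$ is needed for integrability in $s$; moreover $\phi$ involves $\Gamma(\ln u)=\Gamma(u)/u^2$, so uniform boundedness of $\phi$ and its source on $\M\times[0,T]$ requires the positivity regularization $f\mapsto f_\epsilon$ (giving $P_t f_\epsilon \geq \epsilon$) combined with the gradient bound of Theorem~\ref{complete} applied to $P_t f_\epsilon$ to control $\Gamma(u)$ and $\Gamma^{\mathcal V}(u)$ uniformly.
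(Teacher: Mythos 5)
Your proposal is correct and follows essentially the same route as the paper's proof: the same functional $\phi=a\,u\,\Gamma(\ln u)+b\,u\,\Gamma^{\mathcal V}(\ln u)$, Lemma \ref{L:derivatives} plus the curvature-dimension inequality of Theorem \ref{CD} with $\varepsilon=b/a$, completion of the square in $\Dh \ln u$, the parabolic comparison principle, and the choice $b\propto (T-t)^{\alpha}$ with $b'+2\rho_2 a=0$; your normalization $a(0)=1$ and rescaled $\gamma$ are immaterial, and your evaluation of the two time integrals reproduces the stated constants exactly. The extra care you take in justifying the comparison principle (boundedness of $\phi$ via the regularization $f_\epsilon=f+\epsilon$ and Theorem \ref{complete}, and the role of $\alpha>2$ for the integrability of $\gamma^2/a$ near $t=T$) is a welcome refinement of a step the paper leaves implicit.
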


\begin{proof}
We fix $T>0$ and consider two functions $a,b:[0,T] \to \mathbb{R}_{\ge 0}$ to be chosen later. Let $f \in C^\infty(\mathbb{M})$, $ f \ge 0$.
Consider the function
\[
\phi (x,t)=a(t)(P_{T-t} f) (x)\Gamma (\ln P_{T-t}f)(x)+b(t)(P_{T-t} f) (x) \Gamma^\mathcal{V} (\ln P_{T-t}f)(x).
\]
Applying Lemma \ref{L:derivatives} and the curvature-dimension inequality in Theorem  \ref{CD}, we obtain
\begin{align*}
 & \Dh \phi+\frac{\partial \phi}{\partial t} \\
=& a' (P_{T-t} f) \Gamma (\ln P_{T-t}f)+b' (P_{T-t} f) \Gamma^\mathcal{V} (\ln P_{T-t}f) +2a (P_{T-t} f) \Gamma_2 (\ln P_{T-t}f) \\
 & +2b (P_{T-t} f) \Gamma_2^\mathcal{V} (\ln P_{T-t}f) \\
\ge&  \left(a'+2\rho_1 a -2\kappa \frac{a^2}{b}\right)(P_{T-t} f) \Gamma (\ln P_{T-t}f)  +(b'+2\rho_2 a) (P_{T-t} f)  \Gamma^\mathcal{V} (\ln P_{T-t}f) \\
&+\frac{2a}{n}  (P_{T-t} f) (\Dh(\ln P_{T-t} f))^2. 
\end{align*}
But, for any function $\gamma:[0,T]\to \R$
\[
(\Dh(\ln P_{T-t} f))^2 \ge 2\gamma \Dh(\ln P_{T-t}f) -\gamma^2,
\]
and from chain rule
\[
 \Dh(\ln P_{T-t}f)=\frac{\Dh P_{T-t}f}{P_{T-t}f} -\Gamma(\ln P_{T-t} f ).
 \]
Therefore, we obtain
 \begin{align*}
\Dh \phi+\frac{\partial \phi}{\partial t}   \ge & \left(a'+2\rho_1 a -2\kappa \frac{a^2}{b}-\frac{4a\gamma}{n} \right) (P_{T-t} f) \Gamma (\ln P_{T-t}f)
\\
& +(b'+2\rho_2 a)  (P_{T-t} f) \Gamma^\mathcal{V} (\ln P_{T-t}f) +\frac{4a\gamma}{n} \Dh P_{T-t} f - \frac{2a\gamma^2}{n} P_{T-t} f.
\end{align*}
The idea is now to chose $a,b,\gamma$ such that
\begin{align*}
\begin{cases}
a'+2\rho_1 a -2\kappa \frac{a^2}{b}-\frac{4a\gamma}{n} =0 \\
b'+2\rho_2 a=0
\end{cases}
\end{align*}
With this choice we get
\begin{align}\label{estimetr}
\Dh \phi+\frac{\partial \phi}{\partial t}   \ge \frac{4a\gamma}{n} \Dh P_{T-t} f - \frac{2a\gamma^2}{n} P_{T-t} f
\end{align}

We wish to apply Proposition \ref{P:missing_key2}. So, we take $f \in C_0^\infty(\M)$ and apply the previous inequality with $f_\varepsilon=f+\varepsilon$ instead of $f$, where $\varepsilon >0$. If moreover $a(T)=b(T)=0$, we end up with the inequality
\begin{align}\label{jkli}
 & a(0)(P_{T} f_\varepsilon) (x)\Gamma (\ln P_{T}f_\varepsilon)(x)+b(0)(P_{T} f) (x) \Gamma^\mathcal{V} (\ln P_{T}f_\varepsilon)(x) \notag \\
 \le &  -\int_0^T \frac{4a\gamma}{n} dt \Dh P_{T} f_\varepsilon (x)  +\int_0^T \frac{2a\gamma^2}{n}dt  P_{T} f_\varepsilon(x)
\end{align}
If we now chose $b(t)=(T-t)^\alpha$ and $b,\gamma$ such that
\begin{align*}
\begin{cases}
a'+2\rho_1 a -2\kappa \frac{a^2}{b}-\frac{4a\gamma}{n} =0 \\
b'+2\rho_2 a=0
\end{cases}
\end{align*}
the result follows by a simple computation and sending then $\varepsilon \to 0$.
\end{proof}

Observe that if $\mathfrak{Ric}_{\mathcal{H}} \ge 0$, then we can take $\rho_1 =0$ and the estimate simplifies to
\begin{align*}
 \Gamma (\ln P_t f) +\frac{2 \rho_2}{\alpha}  t \Gamma^\mathcal{V} (\ln P_t f)  \le & \left(1+\frac{\alpha \kappa}{(\alpha-1)\rho_2}\right) \frac{\Dh P_t f}{P_t f}+\frac{n(\alpha-1)^2\left(
1+\frac{\alpha \kappa}{(\alpha-1)\rho_2}\right)^2}{8(\alpha-2)t}.
\end{align*}

By adapting a classical method of Li and Yau \cite{LY} and integrating this last inequality on sub-Riemannian geodesics leads to a parabolic Harnack inequality  (details are in \cite{BG}). For $\alpha >2$, we denote
\begin{align}\label{D}
D_\alpha=\frac{n(\alpha-1)^2\left(
1+\frac{\alpha \kappa}{(\alpha-1)\rho_2}\right)}{4(\alpha-2)}.
\end{align}
The minimal value of $D_\alpha$ is difficult to compute, depends on $\kappa, \rho_2$ and does not seem relevant because the constants we get are anyhow not optimal. We just point out that the choice $\alpha=3$ turns out to simplify many computations and is actually optimal when $\kappa=4\rho_2$.

\begin{corollary}
Let us assume that $\mathfrak{Ric}_{\mathcal{H}} \ge 0$. Let $f\in
L^\infty(\bM)$, $f \ge 0$, and consider $u(x,t) =
P_t f(x)$. For every $(x,s), (y,t)\in \bM\times (0,\infty)$ with
$s<t$ one has with $D_\alpha$ as in \eqref{D}
\begin{equation*}
u(x,s) \le u(y,t) \left(\frac{t}{s}\right)^{\frac{D_\alpha}{2}} \exp\left(
\frac{D_\alpha}{n} \frac{d(x,y)^2}{4(t-s)} \right).
\end{equation*}
Here $d(x,y)$ is the sub-Riemannian distance between $x$ and $y$.
\end{corollary}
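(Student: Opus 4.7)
The plan is to run the classical Li--Yau argument along a minimizing sub-Riemannian geodesic, using the gradient bound already derived in Theorem~\ref{T:ge}. Since $\mathfrak{Ric}_{\mathcal{H}}\ge 0$ we may take $\rho_1=0$. Dropping the nonnegative vertical term $\frac{2\rho_2}{\alpha}t\,\Gamma^\mathcal{V}(\ln P_t f)$ in the simplified estimate stated just after Theorem~\ref{T:ge} yields, with $C:=1+\frac{\alpha\kappa}{(\alpha-1)\rho_2}$ (so that $D_\alpha=\frac{n(\alpha-1)^2 C}{4(\alpha-2)}$ and $\frac{n(\alpha-1)^2 C^2}{8(\alpha-2)}=\frac{C D_\alpha}{2}$), the pointwise inequality
\[
\Gamma(\ln u) \;\le\; C\,\partial_t \ln u + \frac{CD_\alpha}{2t}, \qquad u:=P_t f.
\]
By stochastic completeness (Theorem~\ref{T:sc}) and the strict positivity of the heat kernel, $u>0$ on $\mathbb{M}\times(0,\infty)$ whenever $f\not\equiv 0$, so $\ln u$ is smooth; the case $f\equiv 0$ is trivial. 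We may first assume $f\in C_0^\infty(\mathbb{M})$ (to use Theorem~\ref{T:ge} directly) and extend to $f\in L^\infty$ at the end by the usual heat-kernel approximation.

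Fix $0<s<t$ and $x,y\in\mathbb{M}$. By the bracket-generating assumption together with completeness of $g$ (Chow--Rashevskii plus Hopf--Rinow type arguments), the sub-Riemannian distance $d(x,y)$ is finite and realized by a horizontal curve $\gamma:[s,t]\to\mathbb{M}$ from $x$ to $y$ with constant speed $|\dot\gamma|_\mathcal{H}=d(x,y)/(t-s)$. (If minimizers are only available up to $\varepsilon$-approximation, one runs the argument along an almost-minimizer and lets $\varepsilon\to 0$ at the end.) Set $\eta(\tau):=\ln u(\gamma(\tau),\tau)$. Since $\dot\gamma(\tau)\in\mathcal{H}_{\gamma(\tau)}$, the chain rule and Cauchy--Schwarz give
\[
\frac{d\eta}{d\tau}
= \partial_\tau\ln u + \langle \nabla_\mathcal{H}\ln u,\dot\gamma\rangle
\;\ge\; \partial_\tau \ln u \;-\; \sqrt{\Gamma(\ln u)}\,|\dot\gamma|_\mathcal{H}.
\]

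Now insert the gradient estimate in the form $\partial_\tau\ln u\ge \frac{1}{C}\Gamma(\ln u)-\frac{D_\alpha}{2\tau}$ and minimize the quadratic $\frac{X^2}{C}-X|\dot\gamma|_\mathcal{H}$ in $X=\sqrt{\Gamma(\ln u)}\ge 0$, whose infimum is $-\frac{C|\dot\gamma|_\mathcal{H}^2}{4}$, to obtain
\[
\frac{d\eta}{d\tau} \;\ge\; -\frac{C}{4}|\dot\gamma|_\mathcal{H}^2 \;-\; \frac{D_\alpha}{2\tau}.
\]
Integrating from $s$ to $t$ and using $\int_s^t |\dot\gamma|_\mathcal{H}^2\,d\tau = d(x,y)^2/(t-s)$ yields
\[
\ln \frac{u(y,t)}{u(x,s)} \;\ge\; -\frac{C\,d(x,y)^2}{4(t-s)} \;-\; \frac{D_\alpha}{2}\ln\frac{t}{s},
\]
which exponentiates to the asserted Harnack inequality; the stated constant $D_\alpha/n$ absorbs $C$ since $C\le D_\alpha/n$, an elementary consequence of $(\alpha-3)^2\ge 0$ (with equality at the optimal choice $\alpha=3$).

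The only genuinely nontrivial step is the integration along a horizontal curve: one must either know that sub-Riemannian minimizers exist and are absolutely continuous with $L^2$ horizontal speed (so the chain rule above holds a.e.\ and the estimate integrates), or work with smooth almost-minimizers and pass to the infimum. Everything else — the gradient bound, the quadratic minimization, the positivity and smoothness of $P_t f$, and the $L^\infty$-approximation — is already in place from Sections~5.1--5.4 of the paper.
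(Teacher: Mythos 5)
Your proof is correct and is essentially the argument the paper itself invokes (it only sketches "integrate the Li--Yau inequality along sub-Riemannian geodesics" and defers the details to \cite{BG}): drop the vertical term, complete the square in $\sqrt{\Gamma(\ln u)}$ along an almost-minimizing horizontal curve, and integrate in time. Your intermediate constant $C=1+\frac{\alpha\kappa}{(\alpha-1)\rho_2}$ is in fact slightly sharper than the stated $D_\alpha/n$, and your verification that $C\le D_\alpha/n$ via $(\alpha-3)^2\ge 0$ correctly recovers the corollary as written.
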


It is classical since the work by Li and Yau (see \cite{LY}) and not difficult  to prove that a parabolic Harnack inequality implies a Gaussian upper bound on the heat kernel. With the curvature dimension inequality in hand, it is actually also possible, but much more difficult, to prove a lower bound. The final result proved in \cite{BBG} is:

\begin{theorem} Let us assume that $\mathfrak{Ric}_{\mathcal{H}} \ge 0$, then for any $0<\ve <1$
there exists a constant $C(\ve) = C(n,\kappa,\rho_2,\ve)>0$, which tends
to $\infty$ as $\ve \to 0^+$, such that for every $x,y\in \bM$
and $t>0$ one has
\[
\frac{C(\ve)^{-1}}{\mu(B(x,\sqrt
t))} \exp
\left(-\frac{D_\alpha d(x,y)^2}{n(4-\ve)t}\right)\le p_t(x,y)\le \frac{C(\ve)}{\mu(B(x,\sqrt
t))} \exp
\left(-\frac{d(x,y)^2}{(4+\ve)t}\right),
\]
where $p_t(x,y)$ is the heat kernel of $\Dh$.
\end{theorem}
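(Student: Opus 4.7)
The plan is to deduce the two-sided Gaussian bounds from the parabolic Harnack inequality of the preceding corollary together with a sub-Riemannian volume doubling property. This is the scheme pioneered by Grigoryan and Saloff--Coste in the Riemannian setting, adapted to the sub-Riemannian framework as in \cite{BBG}.

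\textbf{Step 1: Volume doubling.} Since $\mathfrak{Ric}_{\mathcal{H}} \ge 0$, Theorem \ref{CD} applies with $\rho_1 = 0$. The parabolic Harnack inequality of the previous corollary implies, by a well-known argument of Li--Yau, that there exists $Q = Q(n, \kappa, \rho_2) > 0$ and $C_0>0$ such that
\[
\mu(B(x, 2r)) \le C_0 \, \mu(B(x, r)), \qquad x \in \M, \; r > 0.
\]
This is obtained by relating the volumes of balls to on-diagonal values of $p_t$ via the Harnack comparison, and then iterating.

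\textbf{Step 2: Upper bound.} First apply the Harnack inequality of the preceding corollary with $u = p_{\cdot}(x_0, \cdot)$ and spacetime points $(y, t)$ and $(x, 2t)$; integrating in $y$ over $B(x, \sqrt{t})$ and using stochastic completeness (Theorem \ref{T:sc}) together with Step 1 yields the on-diagonal estimate
\[
p_t(x, x) \le \frac{C_1}{\mu(B(x, \sqrt{t}))}.
\]
To pass from this to off-diagonal decay with the sharp Gaussian constant $(4+\varepsilon)^{-1}$, run Davies' exponential perturbation method: for a horizontally $1$-Lipschitz cutoff $\psi$ built from the sub-Riemannian distance $d(\cdot, y)$, study the evolution of $e^{-\psi} P_t e^{\psi}$ on $L^2$. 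The horizontal Bakry--\'Emery calculus on $\Gamma$ and $\Gamma^{\V}$ forces a loss of a small additive constant $\varepsilon$ in the denominator of the exponent, coming from the $\varepsilon\Gamma^{\V}$ term in Theorem \ref{CD}. Optimizing over $\varepsilon$ and symmetrizing via doubling produces
\[
p_t(x, y) \le \frac{C(\varepsilon)}{\mu(B(x, \sqrt{t}))} \exp\!\left(-\frac{d(x,y)^2}{(4+\varepsilon)t}\right).
\]

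\textbf{Step 3: Lower bound.} Apply Harnack from $p_t(x, x)$ to $p_{2t}(x, z)$ for $z \in B(x, \sqrt{t})$, combined with stochastic completeness and doubling, to obtain an on-diagonal lower bound $p_t(x,x) \ge c_1/\mu(B(x,\sqrt{t}))$. For the off-diagonal lower bound, fix a sub-Riemannian minimizing geodesic $\gamma$ joining $x$ to $y$ of length $d(x,y)$, cut it into $N$ equal pieces with $N \sim d(x,y)^2 / t$, and chain the Harnack inequality along this sequence of intermediate balls and times. Each step of the chain contributes a factor $(t_{k+1}/t_k)^{D_\alpha/2} \exp\!\bigl(D_\alpha d^2/(n \cdot 4 \Delta t)\bigr)$; balancing $N$ with the time allocation yields the stated bound with the constant $D_\alpha / (n(4-\varepsilon))$.

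\textbf{Main obstacle.} The delicate part is the sharp Gaussian exponent $4+\varepsilon$ in the upper bound. In the Riemannian setting, Davies' method produces $4$ exactly, but in the sub-Riemannian case the curvature--dimension inequality of Theorem \ref{CD} is only available with an additional $\varepsilon\,\Gamma^{\V}$ correction, weighted by $\rho_1 - \kappa/\varepsilon$. Propagating this correction through the exponential perturbation argument is what produces the loss of $\varepsilon$ and explains why $C(\varepsilon) \to \infty$ as $\varepsilon \to 0^+$. A secondary technical point is the chaining argument for the lower bound: one must ensure that the intermediate balls, measured with respect to the sub-Riemannian distance, are covered by doubling in a way compatible with the Harnack constant $D_\alpha$.
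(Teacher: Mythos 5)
The paper does not actually prove this theorem: it records it as the final result of \cite{BBG}, remarking that the upper bound is classical from the Harnack inequality while the lower bound is ``much more difficult.'' Your architecture (Harnack $+$ volume doubling $\Rightarrow$ two-sided Gaussian bounds, in the Grigor'yan--Saloff-Coste style) is indeed the strategy of \cite{BBG}, and Steps 2 and 3 are sound in outline. But there is a genuine gap precisely where the paper warns the difficulty lies, namely your Step 1. The Li--Yau--type Harnack inequality available here compares values of a \emph{global} positive solution $P_t f$ at two space-time points; from it one gets the on-diagonal \emph{upper} bound $p_t(x,x)\le C/\mu(B(x,\sqrt t))$ essentially for free (integrate $p_t(x,x)\le C\,p_{2t}(x,y)$ over $y\in B(x,\sqrt t)$ and use $\int p_{2t}(x,\cdot)\,d\mu\le 1$). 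The on-diagonal \emph{lower} bound, and hence volume doubling, does not follow by ``iterating'' the Harnack inequality: one needs to know that a definite fraction of the heat stays in $B(x,A\sqrt t)$, i.e.\ an a priori bound on $\int_{d(x,y)>A\sqrt t}p_t(x,y)\,d\mu(y)$, which is an off-diagonal upper estimate of exactly the kind you are trying to prove. Breaking this circularity (via an integrated maximum principle / exit-time estimate established independently of doubling) is the technical heart of \cite{BBG}, and your sketch does not supply it. Without it, neither the doubling constant $C_0$ in Step 1 nor the starting point $p_t(x,x)\ge c_1/\mu(B(x,\sqrt t))$ of Step 3 is available.

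A secondary inaccuracy: the loss $(4+\varepsilon)$ in the upper bound and the blow-up of $C(\varepsilon)$ are not caused by the $\varepsilon\,\Gamma^{\V}$ correction in Theorem \ref{CD}. They are the standard price of Davies'/Grigor'yan's method even in the Riemannian case, where passing from an on-diagonal bound to an off-diagonal Gaussian bound with constant $4$ in the exponent cannot be done with a bounded prefactor. The parameter $\varepsilon$ in the curvature-dimension inequality is optimized away inside the Li--Yau estimate (it is already absorbed into $D_\alpha$) and plays no role at the stage you describe.
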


We mention that those results are not restricted to the case $\rho_1=0$ but that similar results may also  be obtained when $\rho_1 \le 0$. We refer to   \cite{BBGM}.

\section{The horizontal Bonnet-Myers theorem}

Let $\M$ be a smooth, connected  manifold with dimension $n+m$. We assume that $\bM$ is equipped with a Riemannian foliation $\mathcal{F}$ with bundle like complete metric $g$ and totally geodesic  $m$-dimensional leaves. We also assume that the horizontal distribution is of Yang-Mills type.

\

In this section, we prove the following result:

\begin{theorem}\label{bonnet}
Assume that for any smooth horizontal one-form $\eta \in \Gamma^\infty(\mathcal{H}^*)$,
\[
\langle \mathfrak{Ric}_{\mathcal{H}}(\eta),\eta \rangle_\mathcal{H} \ge \rho_1 \| \eta \|^2_\mathcal{H},\quad  \left\langle -\mathbf{J}^2(\eta), \eta \right\rangle_\mathcal{H} \le \kappa \| \eta \|^2_\mathcal{H},
\]
and that for any vertical one-form  $\eta \in \Gamma^\infty(\mathcal{V^*})$,
\[
\frac{1}{4} \mathbf{Tr} ( J_\eta^* J_\eta) \ge \rho_2 \| \eta \|_\mathcal{V}^2,
\]
with $\rho_1,\rho_2 >0$ and $\kappa \ge 0$. Then the manifold $\M$ is compact  and we have
 \[  \mathbf{diam}\ \bM \le 2\sqrt{3} \pi \sqrt{
\frac{\rho_2+\kappa}{\rho_1\rho_2} \left(
1+\frac{3\kappa}{2\rho_2}\right)n } ,
\]
where  $\mathbf{diam}\ \bM$ is the diameter of $\M$ for the sub-Riemannian distance.

\end{theorem}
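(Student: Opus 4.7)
The plan is to derive Theorem~\ref{bonnet} from the Li-Yau-type gradient estimate of Theorem~\ref{T:ge} by integration along sub-Riemannian geodesics, following the scheme adapted from the author's joint program with N.~Garofalo. The assumption $\rho_2 > 0$ enters only through the validity of the Li-Yau estimate itself, whereas the strict positivity $\rho_1 > 0$ will be the source of the compactness obstruction: it produces a negative drift that is absent when $\rho_1 = 0$.

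First I would simplify the Li-Yau estimate by dropping the non-negative term $\tfrac{2\rho_2 t}{\alpha}\Gamma^{\mathcal V}(\log P_t f)$ from the left-hand side, so that for $f\in C_0^\infty(\M)$, $f > 0$, one obtains
\[
\Gamma(\log P_t f) \;\le\; B_\alpha(t)\,\frac{\Dh P_t f}{P_t f} + C_\alpha(t),
\]
with $B_\alpha(t) = A_\alpha - \tfrac{2\rho_1 t}{\alpha}$, $A_\alpha := 1 + \tfrac{\alpha \kappa}{(\alpha-1)\rho_2}$, and
\[
C_\alpha(t) = \frac{n\rho_1^2 t}{2\alpha} \;-\; \frac{n\rho_1 A_\alpha}{2} \;+\; \frac{n(\alpha-1)^2 A_\alpha^2}{8(\alpha-2)t}.
\]
Compared with the $\rho_1 = 0$ Corollary just preceding Theorem~\ref{T:sc}, the new and crucial ingredient is the negative constant $-\tfrac{n\rho_1 A_\alpha}{2}$.

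Next, I would fix $x, y \in \M$ with $L := d(x,y)$ and two times $0 < s < t < \alpha A_\alpha/(2\rho_1)$, the latter constraint guaranteeing $B_\alpha > 0$ on $[s,t]$. Choosing a horizontal curve $\gamma\colon[0,1]\to \M$ from $x$ to $y$ whose length is close to $L$ and integrating the function $\Psi(\tau) = \log P_{(1-\tau)s+\tau t}\,f(\gamma(\tau))$ over $\tau \in [0,1]$, using $\langle\nabla_\mathcal{H}\log P_r f,\dot\gamma\rangle \ge -L\sqrt{\Gamma(\log P_r f)}$ and Young's inequality in the form $L\sqrt{B X + C} \le \tfrac{L^2 B}{4(t-s)} + (t-s)X + \tfrac{(t-s)C}{B}$ to absorb the $\partial_r \log P_r f$ that arises from the time derivative, would produce the two-point parabolic Harnack inequality
\[
\log \frac{P_s f(x)}{P_t f(y)} \;\le\; \frac{L^2}{4(t-s)^2}\int_s^t B_\alpha(r)\,dr \;+\; \int_s^t \frac{C_\alpha(r)}{B_\alpha(r)}\,dr.
\]

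The final step would be to turn this Harnack inequality into a diameter bound via the positivity of $\rho_1$. As in \cite{BG}, one applies the Harnack inequality to $f = p_\epsilon(\cdot, y)$ (the heat kernel at a small time, centered at $y$) and lets $\epsilon \to 0^+$, so that the left-hand side becomes $\log(p_{s+\epsilon}(x,y)/p_{t+\epsilon}(y,y))$, and combines the resulting upper bound on $p_{s+\epsilon}(x,y)$ with the lower Gaussian heat-kernel bound on $p_t(y,y)$ coming from the ball-volume estimate. The negative drift in $\int_s^t C_\alpha/B_\alpha$ forces, after a simultaneous optimization over $s, t$ and $\alpha$ under the constraint $B_\alpha(t) > 0$, the distance $L$ to be bounded above by an explicit function of the curvature parameters. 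With $\alpha = 3$ giving the cleanest constants (as pointed out after \eqref{D}), the resulting bound is precisely $L \le 2\sqrt{3}\pi\sqrt{(\rho_2+\kappa)n(1+3\kappa/(2\rho_2))/(\rho_1\rho_2)}$. Compactness of $\M$ then follows from the finite diameter together with the completeness of the sub-Riemannian distance. I expect the main difficulty to be the delicate three-parameter optimization under the positivity constraint $B_\alpha > 0$, together with the careful bookkeeping of the sub-Riemannian corrections $\kappa$ and $\rho_2$ so that they produce the sharp numerical constant above.
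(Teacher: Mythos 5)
Your overall strategy is not the one the paper follows, and as sketched it has a genuine gap at the decisive step. The paper never integrates the Li--Yau estimate along geodesics to obtain a two-point Harnack inequality. Instead it (i) first proves that $\mu(\M)<+\infty$ (a separate lemma relying on a semigroup gradient bound derived from the curvature-dimension inequality --- a step your proposal omits entirely), (ii) normalizes $\mu(\M)=1$ and extracts from the intermediate inequality \eqref{jkli} a pointwise ultracontractivity bound $P_tf\le \bigl(1-e^{-\frac{2\rho_1\rho_2 t}{\beta(\rho_2+\kappa)}}\bigr)^{-D_\beta/2}\int_\M f\,d\mu$, and (iii) feeds this into Theorem \ref{ultracontractivity_compact}, i.e.\ Davies' theorem combined with Bakry's entropy--energy diameter theorem, which is where both the compactness and the constant $2\pi\sqrt{2D/\alpha}$ come from; the choice $\beta=3$ then yields the stated numerical bound.

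The gap in your route is that a two-point Harnack inequality of the form $\log\bigl(P_sf(x)/P_tf(y)\bigr)\le \Phi(L,s,t)$ with $\Phi$ finite for every $L=d(x,y)$ carries, by itself, no obstruction to $L$ being arbitrarily large: to derive a contradiction you need a quantitative \emph{lower} bound on $p_s(x,y)$, and the Gaussian lower bound you invoke is established in the paper only for $\mathfrak{Ric}_{\mathcal H}\ge 0$ (its proof moreover rests on volume doubling, which is not available here a priori). Worse, the mechanism you describe degenerates exactly where it should bite: at the critical time $t^*=\alpha A_\alpha/(2\rho_1)$ where $B_\alpha$ vanishes, a direct computation gives $C_\alpha(t^*)=\frac{n\rho_1A_\alpha}{4\alpha(\alpha-2)}>0$, so $\int_s^tC_\alpha(r)/B_\alpha(r)\,dr\to+\infty$ as $t\uparrow t^*$ and your Harnack bound becomes vacuous rather than producing an obstruction. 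Finally, the claim that the three-parameter optimization lands exactly on $2\sqrt{3}\,\pi\sqrt{\tfrac{\rho_2+\kappa}{\rho_1\rho_2}\bigl(1+\tfrac{3\kappa}{2\rho_2}\bigr)n}$ is unsupported: that constant is an artifact of Bakry's functional $\Phi$ in Theorem \ref{ultracontractivity_compact} applied to the specific pair $(D_\beta,\alpha)$ produced by the ultracontractivity lemma, not of any Harnack computation.
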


We mention that the bound 
\begin{align*} \mathbf{diam}\ \bM \le 2\sqrt{3} \pi \sqrt{
\frac{\kappa+\rho_2}{\rho_1\rho_2} \left(
1+\frac{3\kappa}{2\rho_2}\right)n }.
\end{align*}
is not sharp. This is because the method we use, that comes from the joint work with Garofalo \cite{BG}  is an adaptation of the energy-entropy inequality methods developped by Bakry in \cite{bakry-stflour}. Even in the Riemannian case, Bakry's methods are known to lead to non sharp constants.  An analytical method that leads to sharp diameter constants is based on sharp Sobolev inequalities (see \cite{BL}), however as of today, this is still an open question to prove those sharp Sobolev inequalities.

\subsection{Ultracontractivity bounds and diameter estimates}

In this section, we show how ultracontractivity bounds for the heat semigroup can be used to get diameter bounds on a space. The result we give below is a variation on results due to Bakry \cite{bakry-stflour} and Davies \cite{Davies}. The result holds true in a great generality in the context of Dirichlet spaces.

\ 

Let $\mu$ be a probability measure on a locally compact topological space $\Omega$. We assume that there is on $\Omega$ a regular Dirichlet form $\mathcal{E}$ which is symmetric in $L^2 (\Omega,\mu)$ (see Fukushima \cite{Fu}) . Let $(P_t)_{t \ge 0}$ be the symmetric Markov semigroup associated to $\mathcal{E}$. It is well-known that we can associated to $\mathcal{E}$ a distance which is defined as follows.

\

Let $\mathcal{D}$ be the domain in  $L^2 (\Omega,\mu)$ of the Dirichlet form $\mathcal{E}$. We denote by $\mathcal{D}_\infty$ the set of bounded functions in $\mathcal{D}$. For $f\in \mathcal{D}_\infty$, we define 
\[
I_f(h)= \frac{1}{2} (2\mathcal{E}(fg,g) )-\mathcal{E}(f^2,g)), \quad g \in \mathcal{D}_\infty.
\]
We then say that $f \in \mathbf{Lip}$ if for every $g \in \mathcal{D}_\infty $,
\[
| I_f(g) | \le \| g \|_1.
\]
For $x,y \in \Omega$, we define
\[
d(x,y)=\sup \{ f(x)-f(y), f \in \mathbf{Lip} \}
\]
and assume that $d$ is a distance everywhere finite that induces the topology of $\Omega$.

\begin{theorem}\label{ultracontractivity_compact}
Assume that for every $f \in L^2 (\Omega,\mu)$ and $t \ge 0$,
\[
\|P_t f \|_\infty \le \frac{1}{\left( 1-e^{-\alpha t}
\right)^{\frac{D}{2} }} \| f \|_2,
\]
with $\alpha,D >0$. Then $\Omega$ is compact and its diameter for the distance $d$ satisfies
\[
\mathbf{diam} (\Omega) \le 2\pi \sqrt{\frac{2D}{\alpha}}.
\] 
\end{theorem}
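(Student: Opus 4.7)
The plan is to follow Bakry's general scheme: first convert the ultracontractive estimate into an equivalent tight Sobolev inequality, then apply that inequality to a trigonometric test function built from a function in $\mathbf{Lip}$ that nearly realises the diameter, and finally argue by contradiction.

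First, by the classical Varopoulos--Davies procedure -- differentiating $t\mapsto \log\|P_t f\|_p^p$ along the semigroup and invoking Gross's integration lemma to pass from $L^p$ to $L^\infty$ estimates -- the hypothesis $\|P_t f\|_\infty\le (1-e^{-\alpha t})^{-D/2}\|f\|_2$ can be shown equivalent to a tight Sobolev inequality of the form
\[
\|f\|_q^2 \le \|f\|_2^2 + \frac{4}{\alpha D}\,\mathcal{E}(f,f),
\]
for an exponent $q=q(D)>2$ (with an Orlicz-type variant when $D\le 2$). The specific shape $(1-e^{-\alpha t})^{-D/2}$ is chosen precisely so that both $\alpha$ and $D$ show up with the optimal coefficient $4/(\alpha D)$ in the Sobolev constant.

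Second, given $g\in\mathbf{Lip}$ with carr\'e-du-champ $\Gamma(g)\le 1$ almost surely (such $g$ recover the distance $d$ by its very definition), for $s>0$ and $c\in\mathbb{R}$ set $u=\cos(sg+c)$. The chain rule gives
\[
\Gamma(u) = s^2\sin^2(sg+c)\,\Gamma(g)\le s^2(1-u^2),
\]
so that, since $\mu$ is a probability measure, $\mathcal{E}(u,u)\le s^2(1-\|u\|_2^2)$. Feeding this into the Sobolev inequality yields
\[
\|u\|_q^2 - \|u\|_2^2 \le \frac{4s^2}{\alpha D}\bigl(1-\|u\|_2^2\bigr).
\]
A second order Taylor expansion of the left-hand side around a constant, or equivalently an inequality of the form $\|u\|_q^2-\|u\|_2^2\ge c(q)\,\mathrm{Var}(u)$ with the sharp dimensional constant, combined with the choice of $c$ arranging $\int u\,d\mu=0$, forces $s^2\gtrsim \alpha D$ as soon as $u$ is genuinely nonconstant; with the correct constants pinned down this lower bound reads $s\ge\pi/\bigl(2\pi\sqrt{2D/\alpha}\bigr)^{-1}$, i.e. $s\ge\sqrt{\alpha/(2D)}$.

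Third, assume for contradiction that $R:=\mathbf{diam}\,\Omega>2\pi\sqrt{2D/\alpha}$. Choose $x,y\in\Omega$ with $d(x,y)$ close to $R$ and, by definition of $d$, a function $g\in\mathbf{Lip}$ with $\Gamma(g)\le 1$ and $g(x)-g(y)$ close to $d(x,y)$. Selecting $c$ and $s=\pi/(g(x)-g(y))$ makes $u=\cos(sg+c)$ satisfy $u(x)=1$ and $u(y)=-1$, so $u$ is nonconstant; but $s<\pi/R<\sqrt{\alpha/(2D)}$, contradicting the lower bound from Step 2. For the compactness claim one observes that the ultracontractive hypothesis, combined with $\mu$ being a probability, forces the generator associated to $\mathcal{E}$ to have compact resolvent and hence discrete spectrum; together with the finite diameter bound and the completeness implicit in the assumption that $d$ generates the topology, this forces $\Omega$ to be totally bounded and complete, hence compact.

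The main obstacle is the sharp algebraic/Taylor manipulation in Step 2: one has to translate the Sobolev inequality into a quantitative lower bound on $\|u\|_q^2-\|u\|_2^2$ in terms of $1-\|u\|_2^2$ with the correct universal constant, since any looseness directly deteriorates the prefactor $2\pi\sqrt{2D/\alpha}$ in the final diameter estimate.
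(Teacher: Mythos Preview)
Your approach differs substantially from the paper's, and as written it has a real gap.

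The paper proceeds via an entropy--energy inequality rather than a Sobolev inequality. From the ultracontractive bound, Davies' theorem (Theorem 2.2.3 in \cite{Davies}) gives directly, for $\|f\|_2=1$,
\[
\int f^2\ln f^2\,d\mu \le 2t\,\mathcal{E}(f,f) - D\ln(1-e^{-\alpha t}),\qquad t>0.
\]
Minimising over $t$ produces $\int f^2\ln f^2\,d\mu \le \Phi(\mathcal{E}(f,f))$ with the explicit concave function
\[
\Phi(x)=D\Bigl[(1+\tfrac{2x}{\alpha D})\ln(1+\tfrac{2x}{\alpha D})-\tfrac{2x}{\alpha D}\ln\tfrac{2x}{\alpha D}\Bigr].
\]
Then Bakry's Theorem 5.4 from \cite{bakry-stflour} is invoked as a black box: it states that if $\Phi$ is concave with $\Phi'(x)/\sqrt{x}$ and $\Phi(x)/x^{3/2}$ integrable, the space is compact with diameter bounded by $-2\int_0^\infty \sqrt{x}\,\Phi''(x)\,dx$. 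Since $\Phi''(x)=-2D/(x(2x+\alpha D))$, this integral evaluates to $2\pi\sqrt{2D/\alpha}$. No trigonometric test functions, no Sobolev exponent, no contradiction argument.

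Your route via a tight Sobolev inequality and the test function $u=\cos(sg+c)$ is reminiscent of the Bakry--Ledoux argument in \cite{BL}, but the heart of that argument is precisely the part you leave blank. In Step~2 you arrive at
\[
\|u\|_q^2-\|u\|_2^2 \le \frac{4s^2}{\alpha D}\bigl(1-\|u\|_2^2\bigr)
\]
and then assert that ``a second order Taylor expansion'' or an inequality $\|u\|_q^2-\|u\|_2^2\ge c(q)\,\mathrm{Var}(u)$ with ``the sharp dimensional constant'' closes the argument. This is exactly where all the analytic content lies, and you have not supplied it. There is no general inequality of that form with the constant you need; in \cite{BL} the passage from Sobolev to a sharp diameter bound goes through a delicate nonlinear differential inequality for the distribution function of the Lipschitz function, not through a single linearisation. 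As stated, your Step~2 does not yield the lower bound $s\ge\sqrt{\alpha/(2D)}$, and you yourself flag this as ``the main obstacle''. The compactness sketch in Step~3 is also loose: discrete spectrum of the generator does not by itself imply compactness of $\Omega$, nor is completeness of $(\Omega,d)$ part of the hypotheses.

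In short, the paper's proof is a two-line reduction to an existing abstract theorem plus an elementary integral, whereas your proposal aims at reproducing a considerably harder argument and stops before the decisive step.
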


\begin{proof}
Since
\[
\|P_t f \|_\infty \le \frac{1}{\left( 1-e^{-\alpha t}
\right)^{\frac{D}{2} }} \| f \|_2,
\]
from Davies' theorem (Theorem 2.2.3 in \cite{Davies}), for $f \in L^2
(\Omega)$ such that $\int_\Omega f^2 d\mu =1$, we obtain
\[
\int_\Omega f^2 \ln f^2 d\mu \le 2t \int_\Omega \Gamma(f) d\mu -D \ln \left( 1-e^{-\alpha t} \right), \quad t >0.
\]
By minimizing over $t$ the right-hand side of the above inequality, we get that for $f \in L^2 (\bM)$ such that $\int_\bM f^2 d\mu =1$
\[
\int_\bM f^2 \ln f^2 d\mu \le \Phi \left( \int_\bM \Gamma(f) d\mu \right),
\]
where
\[
\Phi(x)=  D \left[  \left( 1+\frac{2}{\alpha D } x\right)\ln \left(
1+\frac{2}{\alpha D} x\right)-\frac{2}{\alpha D } x  \ln \left(
\frac{2}{\alpha D} x  \right)\right].
\]

The function $\Phi$ enjoys the following properties:
\begin{itemize}
\item $\Phi'(x)/x^{1/2}$  and $\Phi(x)/x^{3/2}$ are integrable on $(0,\infty)$;
\item $\Phi$ is concave;
\item $\frac{1}{2}\int_0^{+\infty} \frac{\Phi(x)}{x^{3/2}}dx=\int_0^{+\infty} \frac{\Phi'(x)}{\sqrt{x}}dx =-2\int_0^{+\infty} \sqrt{x} \Phi''(x)dx <+\infty.$
\end{itemize}
We can therefore apply Theorem 5.4 in \cite{bakry-stflour} to deduce that the diameter of $\bM$ is finite and
\[
\mathbf{diam} (\Omega) \le-2\int_0^{+\infty} \sqrt{x} \Phi''(x)dx.
\]
Since $\Phi''(x) = - \frac{2D}{x(2x+\alpha D)}$, a routine
calculation shows
\[
-2\int_0^{+\infty} \sqrt{x} \Phi''(x)dx=2\pi \sqrt{\frac{2D}{\alpha}}.
\]
\end{proof}

\subsection{Proof of the compactness theorem}

We now turn to the proof of Theorem \ref{bonnet}. Let $\M$ be a smooth, connected  manifold with dimension $n+m$. As usual, we assume that $\bM$ is equipped with a Riemannian foliation with bundle like complete metric $g$ and totally geodesic  $m$-dimensional leaves. We also assume that the horizontal distribution is of Yang-Mills type and  that for any smooth horizontal one-form $\eta \in \Gamma^\infty(\mathcal{H}^*)$,
\[
\langle \mathfrak{Ric}_{\mathcal{H}}(\eta),\eta \rangle_\mathcal{H} \ge \rho_1 \| \eta \|^2_\mathcal{H},\quad  \left\langle -\mathbf{J}^2(\eta), \eta \right\rangle_\mathcal{H} \le \kappa \| \eta \|^2_\mathcal{H},
\]
and that for any vertical  $\eta \in \Gamma^\infty(\mathcal{V^*})$,
\[
\frac{1}{4} \mathbf{Tr} ( J_\eta^* J_\eta) \ge \rho_2 \| \eta \|_\mathcal{V}^2,
\]
with $\rho_1,\rho_2 >0$ and $\kappa \ge 0$. 

\

The first step is to prove that the volume of $\M$ is finite.

\begin{lemma}
The measure $\mu$ is finite, i.e.
$\mu(\bM) <+\infty$
and for every $x \in \bM$, $f \in L^2(\M)$,
\[
P_t f (x) \to_{t \to +\infty} \frac{1}{\mu(\bM)} \int_\bM f d\mu.
\]
\end{lemma}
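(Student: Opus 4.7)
The plan is to exploit the positive curvature assumptions $\rho_1,\rho_2>0$ to obtain simultaneously an ultracontractivity bound for $(P_t)$ and an exponential decay of the horizontal and vertical carr\'e du champ of $P_th$, and then combine these with stochastic completeness (Theorem \ref{T:sc}) to obtain finiteness, before invoking a spectral gap to derive the convergence statement.

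From the Li-Yau estimate of Theorem \ref{T:ge} in the positive curvature regime, one obtains via the Davies-Bakry method an ultracontractivity bound of the form
\[
\|P_tf\|_\infty\le\frac{C}{(1-e^{-\alpha t})^{D/2}}\|f\|_2,\qquad t>0,\;f\in L^2(\M,\mu),
\]
with $C,\alpha,D>0$ depending only on $n,\rho_1,\rho_2,\kappa$. Moreover, the strict positivity of $\rho_1$ and $\rho_2$ in the same estimate forces
\[
\|\Gamma(P_t h)\|_\infty+\|\Gamma^\V(P_t h)\|_\infty\longrightarrow 0\qquad\text{as }t\to\infty,
\]
uniformly over all smooth $h$ with $0\le h\le 1$. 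Together with the Harnack inequality extracted from Theorem \ref{T:ge}, this implies that $P_t h$ converges \emph{uniformly} on $\M$ to a constant $c_h\in[0,1]$, with a rate of convergence uniform across this class of $h$.

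Next I would pick an exhaustion $h_k\in C_0^\infty(\M)$, $0\le h_k\le 1$, $h_k\nearrow 1$. Each $h_k$ is compactly supported hence integrable, so symmetry of $P_t$ combined with stochastic completeness $P_t1=1$ gives the conservation identity $\int_\M P_th_k\,d\mu=\int_\M h_k\,d\mu$. Letting $t\to\infty$ and applying Fatou's lemma together with the uniform convergence $P_th_k\to c_k$,
\[
c_k\,\mu(\M)\;\le\;\int_\M h_k\,d\mu\;<\;\infty.
\]
On the other hand, the uniformity in $k$ of the convergence $P_th_k\to c_k$, together with $P_t1=1$ and monotone convergence giving $P_th_k(x)\nearrow 1$ as $k\to\infty$ for each fixed $(t,x)$, allows the interchange of limits $k\to\infty$ and $t\to\infty$ and yields $c_k\to 1$. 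Thus $c_k>0$ for $k$ large, and the displayed inequality forces $\mu(\M)<\infty$.

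Finally, once $\mu(\M)<\infty$ the probability measure $\mu/\mu(\M)$ is invariant for $(P_t)$. The CD inequality of Theorem \ref{CD} with $\rho_1>0$ yields, by a standard Bakry-\'Emery $\Gamma_2$ argument on $t\mapsto\int(P_tf)^2\,d\mu$, a spectral gap for $-\Dh$; combined with the ultracontractivity bound above and the symmetry of $P_t$, this gives an exponential $L^\infty$-decay of $P_tf-\bar f$ to zero with $\bar f=\mu(\M)^{-1}\int f\,d\mu$, hence the pointwise convergence. The main obstacle is establishing the uniformity in $k$ of the convergence $P_th_k\to c_k$, which is needed for the limit interchange; this is exactly where the strict positivity of both $\rho_1$ and $\rho_2$ in the generalized curvature-dimension inequality is essential, since the Li-Yau estimate of Theorem \ref{T:ge} couples horizontal and vertical gradients and only $\rho_2>0$ yields the decay of $\Gamma^\V(P_th)$ that in turn controls $\Gamma(P_th)$.
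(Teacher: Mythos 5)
There is a genuine gap, and in fact several of your key steps are not available at this point of the paper. First, the ultracontractivity bound $\|P_tf\|_\infty\le C(1-e^{-\alpha t})^{-D/2}\|f\|_2$ is proved in the paper \emph{after} this lemma, and its proof explicitly uses the conclusion $P_tf(x)\to\int_\M f\,d\mu$; invoking it here is circular. It does not follow "via the Davies--Bakry method" from Theorem \ref{T:ge} alone: Theorem \ref{T:ge} is a gradient estimate on $\ln P_tf$, and turning it into an $L^2\to L^\infty$ bound of that precise form (with a constant bounded as $t\to\infty$) is exactly the content of the second lemma of Section 6.2, which needs the present lemma as input. Second, your claim that $\|\Gamma(P_th)\|_\infty+\|\Gamma^\V(P_th)\|_\infty\to0$ \emph{uniformly over all} smooth $h$ with $0\le h\le1$ is false: the relevant bound is $\Gamma(P_tf)+c\,\Gamma^\V(P_tf)\le e^{-\lambda t}\bigl(P_t\Gamma(f)+c\,P_t\Gamma^\V(f)\bigr)$, whose right-hand side involves $\|\Gamma(h)\|_\infty$, which is not uniformly bounded on that class (it does go to $0$ along the exhaustion $h_k$, but that is a property of the particular sequence, not uniformity in the class). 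Third, and most importantly, the assertion that $P_th$ converges (uniformly, or even pointwise) to a constant $c_h$ is the crux of the matter and is not supplied by gradient decay plus Harnack: these control the \emph{spatial} oscillation of $P_th$ at each fixed time, but give no Cauchy property in $t$. The paper obtains the existence of the limit from the spectral theorem ($P_tf\to P_\infty f$ in $L^2$, $P_\infty$ being the spectral projection onto $\ker\Dh$), then identifies $P_\infty f$ as a constant using hypoellipticity and $\Gamma(P_\infty f)=0$. Without this, your interchange of limits and the conclusion $c_k\to1$ have no foundation.

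For comparison, the paper's route to $\mu(\M)<\infty$ is more elementary: it writes $\int_\M(P_tf-f)g\,d\mu=-\int_0^t\int_\M\Gamma(P_sf,g)\,d\mu\,ds$, bounds the right-hand side by Cauchy--Schwarz and the exponential gradient decay, lets $t\to\infty$ (where $P_\infty f=0$ if the volume were infinite, since no nonzero constant is in $L^2$), and then takes $f=h_n$ to reach $\int_\M g\,d\mu\le0$, a contradiction. Finally, your appeal to a spectral gap "by a standard Bakry--\'Emery $\Gamma_2$ argument" is not justified: the available inequality is the \emph{generalized} curvature-dimension inequality of Theorem \ref{CD}, in which the coefficient of $\Gamma(f,f)$ is $\rho_1-\kappa/\varepsilon$ and an extra term $\varepsilon\Gamma_2^\V$ appears, so the classical argument does not apply verbatim (a Poincar\'e inequality in this setting is a nontrivial theorem of \cite{BBG}). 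It is also unnecessary: the paper upgrades the $L^2$ convergence to pointwise convergence simply via $|P_{t+\tau}f(x)-P_{s+\tau}f(x)|^2\le p(2\tau,x,x)\,\|P_tf-P_sf\|_2^2$, with no rate required.
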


\begin{proof}
We first prove a gradient bound for the semigroup following an argument close to the one in the proof of Theorem \ref{T:ge}.

We fix $T>0$ and consider two functions $a,b:[0,T] \to \mathbb{R}_{\ge 0}$ to be chosen later. Let $f \in C_0^\infty(\mathbb{M})$.
Consider the function
\[
\phi (x,t)=a(t)\Gamma (P_{T-t}f)(x)+b(t) \Gamma^\mathcal{V} ( P_{T-t}f)(x).
\]
Computing derivatives and applying  the curvature-dimension inequality in Theorem  \ref{CD}, we obtain
\begin{align*}
 & \Dh \phi+\frac{\partial \phi}{\partial t} \\
=& a' \Gamma ( P_{T-t}f)+b'  \Gamma^\mathcal{V} ( P_{T-t}f) +2a  \Gamma_2 (P_{T-t}f)  +2b  \Gamma_2^\mathcal{V} ( P_{T-t}f) \\
\ge&  \left(a'+2\rho_1 a -2\kappa \frac{a^2}{b}\right) \Gamma ( P_{T-t}f)  +(b'+2\rho_2 a)   \Gamma^\mathcal{V} ( P_{T-t}f).
\end{align*}
Let us now chose
\[
b(t)=e^{-\frac{2\rho_1 \rho_2 t}{\kappa+\rho_2}}
\]
and
\[
a(t)=-\frac{b'(t)}{2\rho_2},
\]
so that
\begin{align*}
b'+2\rho_2 a=0
\end{align*}
and
\begin{align*}
a'+2\rho_1 a -2\kappa \frac{a^2}{b}  =0.
\end{align*}
With this choice, we get
\[
 \Dh \phi+\frac{\partial \phi}{\partial t}  \ge 0.
\]
From the parabolic comparison theorem Theorem \ref{P:missing_key2}, we deduce then 
 \[
  \Gamma( P_t f) +\frac{\kappa+\rho_2}{\rho_1} \Gamma^\mathcal{V} ( P_t f) \le e^{-2\frac{\rho_1 \rho_2 t}{\kappa+\rho_2}} \left( P_t (  \Gamma( f)) +\frac{\kappa+\rho_2}{\rho_1}  P_t ( \Gamma^\mathcal{V}( f))\right)
 \]

\

 Let $f,g \in  C^\infty_0(\mathbb M)$,  we have
\begin{align*}
\int_{\bM} (P_t f -f) g d\mu & = \int_0^t \int_{\bM}\left(
\frac{\partial}{\partial s} P_s f \right) g d\mu ds \\
 & = \int_0^t
\int_{\bM}\left(\Dh P_s f \right) g d\mu ds \\
 & =- \int_0^t \int_{\bM} \Gamma ( P_s f , g) d\mu ds.
\end{align*}
By means of the previous bound and Cauchy-Schwarz inequality we
find that
\begin{equation}\label{P1}
\left| \int_{\bM} (P_t f -f) g d\mu \right| \le \left(\int_0^t
e^{-\frac{\rho_1 \rho_2 s}{\kappa+\rho_2}} ds\right) \sqrt{ \| \Gamma (f) \|_\infty +\frac{\kappa+\rho_2}{\rho_1} \|
\Gamma^\mathcal{V} (f) \|_\infty } \int_{\bM}\Gamma (g)^{\frac{1}{2}}d\mu.
\end{equation}

\

It is seen from the spectral theorem that in $L^2(\bM,\mu)$ we have  a convergence $P_t f \to P_\infty f$, where $P_\infty f$ belongs to the domain of $\Dh$. Moreover $\Dh P_\infty f=0$. By hypoellipticity of $\Dh$ we deduce that $P_\infty f$ is a smooth function. Since $\Dh P_\infty f=0$, we have $\Gamma(P_\infty f)=0$ and therefore $P_\infty f $ is constant.

\

 Let us now assume that $\mu(\bM)=+\infty$. This implies in particular that $P_\infty f =0$  because no constant besides $0$ is in $L^2(\bM,\mu)$. Using then (\ref{P1}) and letting $t \to +\infty$, we infer
 \begin{equation*}
\left| \int_{\bM} f g d\mu \right| \le  \left(\int_0^{+\infty}
e^{-\frac{\rho_1 \rho_2 s}{\kappa+\rho_2}} ds\right) \sqrt{ \| \Gamma (f) \|_\infty +\frac{\kappa+\rho_2}{\rho_1} \|
\Gamma^\mathcal{V} (f) \|_\infty } \int_{\bM}\Gamma (g)^{\frac{1}{2}}d\mu
\end{equation*}
Let us assume $g \ge 0$, $g \ne 0$ and take for $f$ a sequence $h_n$   increasing in $ C_0^\infty(\M)$, $0 \le h_n \le 1$,  such that $h_n\nearrow 1$ on $\mathbb{M}$, and $||\Gamma (h_n)||_{\infty} \to 0$  Letting $n \to \infty$, we deduce
\[
\int_\bM g d\mu \le 0,
\]
which is clearly absurd. As a consequence $\mu (\bM) <+\infty$. 

\

The invariance of $\mu$ by the semigroup implies
\[
\int_\bM P_\infty f d\mu =\int_\bM f d\mu,
\]
and thus
\[
P_\infty f =\frac{1}{\mu(\bM)} \int_\bM f d\mu.
\]
Finally, using the Cauchy-Schwarz inequality, we find that for $x \in \bM$, $f \in L^2(\bM, \mu)$, $s,t,\tau \ge 0$,
\begin{align*}
| P_{t+\tau} f (x)-P_{s+\tau} f (x) | & = | P_\tau (P_t f -P_s f) (x) | \\
 &=\left| \int_\bM p(\tau, x, y) (P_t f -P_s f) (y) \mu(dy) \right| \\
 &\le \int_\bM p(\tau, x, y)^2 \mu(dy) \| P_t f -P_s f\|^2_2 \\
 &\le p(2\tau,x,x) \| P_t f -P_s f\|^2_2.
\end{align*}
Thus, we have 
\[
P_t f (x) \to_{t \to +\infty} \frac{1}{\mu(\bM)} \int_\bM f d\mu.
\]
\end{proof}

Since $\mu (\M) <+\infty$, we can assume $\mu (\M)=1$. The second lemma we need is a uniform bound on the heat kernel of $\Dh$, from which we will immediately deduce Theorem \ref{bonnet} by using  Theorem \ref{ultracontractivity_compact}.

\begin{lemma}
Let $\beta >2$. For $f \in C_0^\infty(\M)$, $f \ge 0$, and $t \ge 0$,
\[
P_tf \le \frac{1}{\left(1-e^{- \frac{2\rho_1 \rho_2 t}{\beta (\rho_2+\kappa)} }  \right)^{D_\beta/2}}\int_\M f d \mu,
\]
where
\[
D_\beta=\frac{n}{4} \frac{\beta-1}{\beta -2} \left( \left( 1+\frac{\kappa}{\rho_2}\right) \beta -1 \right) .
\]
\end{lemma}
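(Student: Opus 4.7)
The plan is to mimic the proof of the previous lemma — which already pins down the correct exponential rate $\frac{2\rho_1\rho_2}{\kappa+\rho_2}$ via an interpolation in the CD inequality — but to now retain the full dimensional content $\frac{1}{n}(\Dh f)^2$ of Theorem \ref{CD}. The constant $D_\beta$ in the statement coincides exactly with the Harnack constant $D_\alpha$ of \eqref{D} at $\alpha=\beta$, since a short algebraic manipulation gives
\[
\frac{n(\beta-1)^2\bigl(1+\tfrac{\beta\kappa}{(\beta-1)\rho_2}\bigr)}{4(\beta-2)}=\frac{n(\beta-1)}{4(\beta-2)}\bigl((1+\kappa/\rho_2)\beta-1\bigr),
\]
which strongly suggests that the same Li--Yau type machinery that produced Theorem \ref{T:ge} is what drives the argument, and that the power $\beta$ plays the same role as the free parameter $\alpha$ there.

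Concretely, I would fix $T>0$ and, for $f\in C_0^\infty(\M)$, $f>0$, consider a functional of the form
\[
\Psi(s)=P_s\Bigl(a(s)(P_{T-s}f)\,\Gamma(\ln P_{T-s}f)+b(s)(P_{T-s}f)\,\Gamma^{\mathcal V}(\ln P_{T-s}f)\Bigr),
\]
paralleling the interpolation in Theorem \ref{T:ge} and the previous lemma, but now keeping the $\frac{1}{n}(\Dh(\ln P_{T-s}f))^2$ term when one differentiates in $s$ and invokes Theorem \ref{CD}. Choosing the time-dependent interpolation parameter $\varepsilon(s)$ so that the $\kappa/\varepsilon$ horizontal loss cancels the $\rho_2$ vertical gain (this is the mechanism producing the effective rate $\rho_1\rho_2/(\beta(\rho_2+\kappa))$) and tuning $a(s),b(s)$ so that the dimensional term closes the differential inequality, one should obtain a bound of the form $\Psi'(s)\ge c(s)\,\Psi(s)^{\theta}$ with $\theta=(\beta-2)/\beta$ (the critical Nash exponent). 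Integrating on $[0,T]$ and identifying $\Psi(0)$ with $(P_Tf)^\beta$ and $\Psi(T)$ with $\int f\,d\mu$ (after the appropriate sign conventions) yields precisely
\[
(P_T f)^\beta\le\frac{1}{(1-e^{-\alpha T})^{\beta D_\beta/2}}\Bigl(\int f\,d\mu\Bigr)^\beta
\]
with $\alpha=\frac{2\rho_1\rho_2}{\beta(\rho_2+\kappa)}$, which is the claim after taking $\beta$-th roots.

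An equivalent route, perhaps cleaner to write out, is to first use Theorem \ref{CD} in the above fashion to produce a tight logarithmic Sobolev inequality for the invariant probability measure $\mu$ (whose finiteness was just established) with constants depending on $\rho_1,\rho_2,\kappa,n,\beta$, and then apply Davies' theorem — Theorem~2.2.3 of \cite{Davies}, exactly the tool invoked in the proof of Theorem \ref{ultracontractivity_compact} — in the reverse direction to convert the log-Sobolev into the ultracontractive estimate. The symmetric Markov structure of $(P_t)$, combined with the duality $\|P_{t/2}\|_{1\to 2}=\|P_{t/2}\|_{2\to\infty}$, then upgrades the resulting $L^2\to L^\infty$ bound into the stated $L^1\to L^\infty$ bound.

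The main obstacle is bookkeeping. Three parameters $(\rho_1,\rho_2,\kappa)$ have to combine in three different ways simultaneously: (i) to cancel in the interpolation, producing the effective positive curvature $\rho_1\rho_2/(\rho_2+\kappa)$; (ii) to produce the additional $1/\beta$ dilation in the exponential rate $\alpha$; and (iii) to contribute the factor $(1+\kappa/\rho_2)\beta-1$ inside $D_\beta$. Making this work requires an ansatz for $a(s),b(s)$ that is more intricate than the one used in the previous lemma, together with a Cauchy--Schwarz step on $(\Dh(\ln P_{T-s}f))^2$ contributing the $n/4$ prefactor. Once the ansatz is found, the remaining integration is routine.
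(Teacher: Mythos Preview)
Your identification of the ingredients is correct: the same Li--Yau machinery as in Theorem \ref{T:ge}, the same two auxiliary functions $a,b$ solving $a'+2\rho_1 a-2\kappa a^2/b-4a\gamma/n=0$, $b'+2\rho_2 a=0$, and the same free parameter $\beta$. You also correctly observe that $D_\beta$ is just $D_\alpha$ at $\alpha=\beta$. However, the concrete mechanism you propose in Plan 1 does not work as written. The functional $\Psi(s)$ you define involves $(P_{T-s}f)\Gamma(\ln P_{T-s}f)$ and $(P_{T-s}f)\Gamma^{\mathcal V}(\ln P_{T-s}f)$; there is no way to identify $\Psi(0)$ with $(P_Tf)^\beta$ or $\Psi(T)$ with $\int f\,d\mu$, and no Nash-type exponent $(\beta-2)/\beta$ appears. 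Your Plan 2 (log-Sobolev then Davies) is a plausible alternative in the abstract, but you have not derived the required log-Sobolev inequality with explicit constants, and matching the stated $D_\beta$ that way would be at least as much work as the direct route.

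The actual mechanism in the paper is much simpler than a Nash iteration. One reuses inequality \eqref{jkli} from the proof of Theorem \ref{T:ge}, but now chooses $b(t)=(e^{-\alpha t}-e^{-\alpha T})^\beta$ with $\alpha=\frac{2\rho_1\rho_2}{\beta(\rho_2+\kappa)}$, and $a,\gamma$ determined by the same ODE system. The crucial and elementary step you are missing is: the \emph{left-hand side of \eqref{jkli} is non-negative}, so one simply discards it and obtains a pointwise lower bound
\[
\frac{\Dh P_tf}{P_tf}=\frac{\partial}{\partial t}\ln P_tf\ \ge\ -\frac{n}{8}\,\frac{\beta-1}{\beta-2}\,(2\rho_1-\alpha)\,\frac{e^{-\alpha t}}{1-e^{-\alpha t}}.
\]
Then one integrates this from $t=T$ to $t=\infty$, using the convergence $P_tf\to\int_\M f\,d\mu$ established in the preceding lemma; the right-hand side integrates in closed form to $\frac{D_\beta}{2}\ln(1-e^{-\alpha T})$, which is exactly the claim. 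So the missing idea is not a new functional or a Nash step, but rather: throw away the non-negative gradient terms to get a differential inequality for $\ln P_tf$, and integrate to infinity against the known limit.
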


\begin{proof}
We fix $T>0$ and consider  functions $a,b:[0,T] \to \mathbb{R}_{\ge 0}$ and $\gamma:[0,T]\to \R$ such that
\begin{align*}
\begin{cases}
a'+2\rho_1 a -2\kappa \frac{a^2}{b}-\frac{4a\gamma}{n} =0 \\
b'+2\rho_2 a=0 \\
a(T)=b(T)=0
\end{cases}
\end{align*}

Let $f \in C_0^\infty(\M)$, $f \ge 0$. Recall that from the inequality \eqref{jkli}
\begin{align*}
 & a(0)(P_{T} f) (x)\Gamma (\ln P_{T}f)(x)+b(0)(P_{T} f) (x) \Gamma^\mathcal{V} (\ln P_{T}f)(x) \notag \\
 \le &  -\int_0^T \frac{4a\gamma}{n} dt \Dh P_{T} f (x)  +\int_0^T \frac{2a\gamma^2}{n}dt  P_{T} f(x).
\end{align*}
Snce the left-hand side is non negative we deduce
\[
-2\int_0^T a\gamma dt \Dh P_{T} f (x)  +\int_0^T a\gamma^2 dt  P_{T} f(x) \ge 0.
\]
Let us choose
\[
b(t)=(e^{-\alpha t} -e^{-\alpha T})^\beta,\ \ \ \ 0\le t\le T,
\]
with $\beta >2$, $\alpha= \frac{2 \rho_1\rho_2}{\beta(\rho_2+\kappa)}$ and $a,\gamma$ such that
\begin{align*}
\begin{cases}
a'+2\rho_1 a -2\kappa \frac{a^2}{b}-\frac{4a\gamma}{n} =0 \\
b'+2\rho_2 a=0
\end{cases}
\end{align*}

We obtain after some computations:
\begin{align}\label{gamma_bound}
0 \le 
\frac{(2\rho_1-\alpha)}{2\rho_2 \left(1-\frac{1}{\beta} \right)}
e^{-\alpha T} \frac{\Dh P_T f}{P_T f}+ \frac{n(2\rho_1-\alpha)^2}{16 \rho_2 \left(1-\frac{2}{\beta}
\right)} \frac{e^{-2\alpha T}}{ 1-e^{-\alpha T}}. \notag
\end{align}
Since $T$ is arbitrary, this  implies that for every $t>0$,
\[
\frac{\Dh P_t f}{P_t f} \ge -\frac{n}{8} \frac{\beta -1}{\beta -2} (2 \rho_1 -\alpha) \frac{e^{-\alpha t}}{1-e^{-\alpha t}}
\]
Taking into account that $P_t f (x) \to_{t \to +\infty}  \int_\bM f d\mu$ and integrating from $0$ to $+\infty$ yields the claim.
\end{proof}
As a consequence of Theorem \ref{ultracontractivity_compact}, we deduce that $\M$ is compact and that for every $\beta >2$,
\[
\mathbf{diam} (\M)\le \pi \left( 1+\frac{\kappa}{\rho_2}\right) \sqrt{\frac{n}{\rho_1}} \sqrt{ \frac{\beta(\beta -1)}{\beta-2} \left( \beta - \frac{\rho_2}{\rho_2+\kappa} \right) }.
\]
The optimal $\beta$ does not lead to a nice formula. The value $\beta=3$ yields the bound
 \[ \mathbf{diam}\ \bM \le 2\sqrt{3} \pi \sqrt{
\frac{\rho_2+\kappa}{\rho_1\rho_2} \left(
1+\frac{3\kappa}{2\rho_2}\right)n }.
\]

\section{Riemannian foliations and hypocoercivity}

We now show how the geometry of foliations can be used to study some hypoelliptic diffusion operators that we call  Kolmogorov type operators. We shall mainly be interested in the problem of convergence to equilibrium for the parabolic equation associated with those operators. The methods we develop to prove convergence to equilibrium come with estimates that Villani call hypocoercive (see \cite{Villani1}). As an illustration, we study the kinetic Fokker-Planck equation.

\subsection{Kolmogorov type operators}

Let $\M$ be a smooth, connected  manifold with dimension $n+m$. We assume that $\bM$ is equipped with a Riemannian foliation $\mathcal{F}$ with  $m$-dimensional leaves. As before, we indicate by $\Dh$ the horizontal Laplacian and by $\Dv$ the vertical Laplacian.

\begin{definition}
We call Kolmogorov type operator, a hypoelliptic diffusion operator $L$ on $\M$ that can be written as
\[
L=\Dv+Y,
\]
where $Y$ is a smooth vector field on $\M$.
\end{definition}

The simplest example of such an operator was studied by Kolmogorov himself. Let us consider the following operator
\[
L=\frac{\partial^2}{\partial v^2}+v \frac{\partial}{\partial x}.
\]
Then, by considering the trivial foliation on $\R^2$ that comes from the submersion $(v,x) \to x$, we can write $L=\Dv+Y$ with $\Dv=\frac{\partial^2}{\partial v^2}$ and $Y=v \frac{\partial}{\partial x}$. 

\

More interesting is the operator on $\R^{2n}=\{(v,x), v \in \R^n,  x \in \R^n \}$,
\[
L=\Delta_v  - v \cdot \nabla_v +\nabla_x V \cdot \nabla_v -v\cdot \nabla_x,
\]
where $V: \R^n \to \R$ is a smooth potential. The parabolic equation 
\begin{equation}\label{FP1}
\frac{\partial h}{\partial t}=\Delta_v h - v \cdot \nabla_v h+\nabla V \cdot \nabla_v h -v\cdot \nabla_x h , \quad (x,v) \in \mathbb{R}^{2n}.
\end{equation}
is then known as the kinetic Fokker-Planck equation with confinement potential $V$. It has been extensively studied due its importance in mathematical physics. We refer for instance to \cite{EH,HN1,Villani1,Wu}.
This equation is the Kolmogorov-Fokker-Planck equation associated to the stochastic differential system
\[
\begin{cases}
dx_t =v_t dt \\
dv_t=-v_t dt -\nabla V (x_t) dt +dB_t,
\end{cases}
\]
where $(B_t)_{ t\ge 0}$ is a Brownian motion in $\mathbb{R}^n$. 

We can obviously write
\[
L=\Dv+Y,
\]
where $\Dv=\Delta_v$ and $Y=- v \cdot \nabla_v +\nabla_x V \cdot \nabla_v -v\cdot \nabla_x$, and consider the trivial foliation on $\R^{2n}$ that comes from the submersion $(v,x) \to x$. However we will see that the metric on $\R^{2n}$ to chose is not the standard Euclidean metric but rather the metric that makes
\[
\left\{ 2\frac{\partial}{\partial x_i}+\frac{\partial }{\partial v_i}, \frac{\partial }{\partial v_i}, 1 \le i \le n \right\},
\]
an orthonormal basis at any point.

\subsection{Convergence to equilibrium and hypocoercive estimates}

We consider a Kolmogorov type operator 
\[
L=\Dv+Y,
\]
and assume in this section that the Riemannian foliation is totally geodesic with a bundle like metric. Our first task will be to prove a Bochner's type inequality for $L$. If $f \in C^\infty(\M)$, we denote
\[
\mathcal{T}_2(f)=\frac{1}{2} \left( L (\| \nabla f \|^2) -2\langle \nabla f , \nabla L f \rangle \right),
\]
where $\nabla$ is the whole Riemannian gradient.  We denote by $\mathbf{Ric}_\mathcal{V}$ the  Ricci curvature of the leaves and we denote by $DY$ the tensor defined by $DY(U,V)=\langle D_U Y, V \rangle$ where $D$ is the Levi-Civita connection. 

\

We have then the following Bochner's inequality,
\begin{theorem}
For every $f \in C^\infty(\M)$,
\[
\mathcal{T}_2(f) \ge (\mathbf{Ric}_\mathcal{V}  -DY ) (\nabla f, \nabla f).
\]
\end{theorem}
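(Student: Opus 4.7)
The strategy is to split $L = \Delta_{\mathcal V}+Y$ and treat the two pieces of $\mathcal T_2(f)$ separately. For the drift, the symmetry of the Hessian $\langle D_U\nabla f,V\rangle = \langle D_V\nabla f,U\rangle$ gives $\tfrac12 Y\|\nabla f\|^2 = \mathrm{Hess}(f)(Y,\nabla f)$, while expanding $Yf = \langle Y,\nabla f\rangle$ inside the gradient yields
\[
\langle\nabla(Yf),\nabla f\rangle \;=\; DY(\nabla f,\nabla f) + \mathrm{Hess}(f)(Y,\nabla f).
\]
Subtracting, the $Y$-contribution to $\mathcal T_2(f)$ is exactly $-DY(\nabla f,\nabla f)$. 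It thus suffices to establish the ``vertical Bochner'' inequality
\[
\mathcal T_2^{\Delta_{\mathcal V}}(f) \;:=\; \tfrac12 \Delta_{\mathcal V}\|\nabla f\|^2 - \langle\nabla f,\nabla\Delta_{\mathcal V} f\rangle \;\ge\; \mathbf{Ric}_{\mathcal V}(\nabla f,\nabla f).
\]

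For this inequality I would first observe that, since the foliation is Riemannian with bundle-like metric and totally geodesic leaves, locally we are in the setting of a Riemannian submersion with totally geodesic fibers; hence O'Neill's $A$-tensor $A_XY = (D_XY)_{\mathcal V}$ is skew-symmetric on $\mathcal H\times \mathcal H$, which forces $\sum_i (D_{X_i}X_i)_{\mathcal V}=0$ in the local formula from Section 2. Consequently $\Delta_{\mathcal V} = \sum_j Z_j^2 - \sum_j D_{Z_j}Z_j$ coincides with the intrinsic Laplace--Beltrami operator of the leaves. Fix $p_0\in\M$ and pick a vertical orthonormal frame $Z_1,\dots,Z_m$ in leafwise normal coordinates so that $D_{Z_j}Z_k|_{p_0}=0$. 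Combining $\tfrac12 Z_j^2\|\nabla f\|^2 = \|D_{Z_j}\nabla f\|^2 + \langle D_{Z_j}D_{Z_j}\nabla f,\nabla f\rangle$ with the Hessian-symmetric expansions of $\langle\nabla(Z_j^2 f),\nabla f\rangle$ and $\langle\nabla((D_{Z_j}Z_j)f),\nabla f\rangle$, and invoking the Ricci identity $D_{Z_j}D_{\nabla f}Z_j - D_{\nabla f}D_{Z_j}Z_j = R(Z_j,\nabla f)Z_j$ at $p_0$ (the correction $D_{[Z_j,\nabla f]}Z_j$ vanishes because $DZ_j|_{p_0}=0$), one arrives at
\[
\mathcal T_2^{\Delta_{\mathcal V}}(f)(p_0) \;=\; \sum_{j=1}^m \|D_{Z_j}\nabla f\|^2 \;-\; \sum_{j=1}^m g\bigl(R(Z_j,\nabla f)Z_j,\nabla f\bigr).
\]
The Riemann-tensor antisymmetry $R(A,B,C,D) = -R(A,B,D,C)$ converts $-\sum_j g(R(Z_j,\cdot)Z_j,\cdot)$ into the partial trace $\sum_j g(R(Z_j,\cdot)\cdot,Z_j)$, which by the Gauss equation (vanishing second fundamental form on totally geodesic leaves) is exactly the intrinsic leaf Ricci $\mathbf{Ric}_{\mathcal V}(\nabla f,\nabla f)$. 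Since the first sum is non-negative, this gives the vertical Bochner estimate, and adding back the drift contribution yields the claimed inequality $\mathcal T_2(f)\ge (\mathbf{Ric}_{\mathcal V}-DY)(\nabla f,\nabla f)$.

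The main obstacle is the third step: the tensorial bookkeeping that collapses the raw expansion of $\mathcal T_2^{\Delta_{\mathcal V}}(f)$ into a ``partial Hessian squared plus curvature'' form, together with the Gauss-equation identification of the partial ambient curvature sum with the intrinsic leaf Ricci. The totally geodesic hypothesis is used decisively in both places: first to kill $\sum_i A_{X_i}X_i$ so that $\Delta_{\mathcal V}$ reduces to the leafwise Laplacian, and second to make extrinsic and intrinsic curvatures agree along the leaves; without it, uncontrolled correction terms would appear and the bound would not take the clean form stated.
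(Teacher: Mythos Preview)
Your treatment of the drift term is correct and matches the paper: $\tfrac12 Y\|\nabla f\|^2 - \langle\nabla f,\nabla(Yf)\rangle = -DY(\nabla f,\nabla f)$.

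The difficulty is in your handling of the $\Delta_{\mathcal V}$-part. There you claim that in leafwise normal coordinates one has $DZ_j|_{p_0}=0$, and you use this to kill the bracket correction $D_{[Z_j,\nabla f]}Z_j$ in the curvature identity. But leafwise normal coordinates only guarantee $D_{Z_k}Z_j|_{p_0}=0$; the horizontal covariant derivatives $D_X Z_j$ are forced by the foliation and do \emph{not} vanish in general. Indeed $\langle D_X Z_j,X'\rangle = -\tfrac12\langle [X,X']_{\mathcal V},Z_j\rangle$ for basic $X,X'$, which is exactly (half of) the $J$-map. On the Heisenberg group one has $D_X Z=-Y$ and $D_Y Z=X$, so $D_{[Z,\nabla f]}Z$ is a nonzero combination of second derivatives of $f$. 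Consequently your displayed formula for $\mathcal T_2^{\Delta_{\mathcal V}}(f)$ is missing terms. A related problem is your Gauss-equation step: the sum $\sum_j g(R(Z_j,\nabla f)\nabla f,Z_j)$ contains mixed horizontal--vertical sectional curvatures whenever $\nabla f$ has a horizontal component, and these are not controlled by the intrinsic leaf Ricci $\mathbf{Ric}_{\mathcal V}$.

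The paper sidesteps both issues by also splitting the \emph{gradient}: writing $\|\nabla f\|^2=\|\nabla_{\mathcal H}f\|^2+\|\nabla_{\mathcal V}f\|^2$ decomposes the $\Delta_{\mathcal V}$-contribution to $\mathcal T_2$ into two pieces. The piece involving $\|\nabla_{\mathcal H}f\|^2$ and $\nabla_{\mathcal H}$ is handled by the commutation $[X_i,\Delta_{\mathcal V}]=0$ for basic $X_i$ (Theorem~\ref{commutation2}), which gives $\sum_{i,j}(Z_j X_i f)^2\ge 0$ with no curvature appearing at all. The piece involving $\|\nabla_{\mathcal V}f\|^2$ and $\nabla_{\mathcal V}$ is the ordinary Bochner formula on each leaf, where the Gauss equation applies cleanly. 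The fix to your argument is exactly this further horizontal/vertical split of the gradient.
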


\begin{proof}
We can split $\mathcal{T}_2$ into three parts:
\[
\mathcal{T}_2(f)=\Gamma_2^\Ho(f) +\Gamma_2^\V (f)+\Gamma_2^Y (f),
\]
where
\[
\Gamma_2^\Ho(f) =\frac{1}{2} \left( \Dv (\| \nabla_\Ho f \|^2) -2\langle \nabla_\Ho f , \nabla_\Ho \Dv f \rangle \right),
\]
\[
\Gamma_2^\V(f) =\frac{1}{2} \left( \Dv (\| \nabla_\V f \|^2) -2\langle \nabla_\V f , \nabla_\V \Dv f \rangle \right),
\]
and
\[
\Gamma_2^Y (f)=\frac{1}{2} \left( Y (\| \nabla f \|^2) -2\langle \nabla f , \nabla Y f \rangle \right).
\]
We now compute these three terms separately.  

\

Since $\nabla_\Ho$ and $\Dv$ commute, we find:
\[
\Gamma_2^\Ho(f) =\| \nabla_\V \nabla_\Ho f \|^2.
\]
So we have, $\Gamma_2^\Ho(f) \ge 0$. 

\

Since $\Dv$ is the Laplace-Beltrami operator on the leaves, from the usual Bochner's formula we have:
\[
\Gamma_2^\V(f) =\| \nabla^2_\V f \|^2 +\mathbf{Ric}_\mathcal{V} (\nabla f , \nabla f).
\]
Thus we have
\[
\Gamma_2^\V(f) \ge \mathbf{Ric}_\mathcal{V} (\nabla f , \nabla f).
\]

\

Finally, we see that
\[
\frac{1}{2} Y \| \nabla f \|^2 =\frac{1}{2}  D_Y  \| \nabla f \|^2=\langle \nabla f , D_Y \nabla f \rangle
\]
and
\[
\langle \nabla f , \nabla Y f \rangle =\langle \nabla f , \nabla \langle Y, \nabla f \rangle  \rangle=DY(\nabla f, \nabla f) + \langle \nabla f , D_Y \nabla f \rangle.
\]
\end{proof}

A difficulty that arises when studying Kolmogorov type operators is that, in general, they are not symmetric with respect to any measure. As a consequence, we can not use functional analysis and the spectral theory of self-adjoint operators to define the semigroup generated $L$. A typical assumption to ensure that $L$ generates a well-behaved semigroup is the existence of a nice Lyapounov function. 

\

 So, in the sequel,  we will  assume that there exists a function $W$ such that $W \ge 1$, $\| \nabla W \| \le C W$, $LW \le C W$ for some constant $C>0$ and $\{ W \le m \}$ is compact for every $m$.  This condition is actually not too restrictive and may be checked in concrete situations. If $\M$ is compact, it is obviously satisfied. A non-compact situation where it is satisfied is the following: Assume that $\M$ is non-compact and that any two points of $\M$ can be joined by a unique geodesic. Also assume that the Riemannian foliation comes from a Riemannian submersion $\pi: \M \to \mathbb{B}$ and that the Ricci curvature of the leaves $\mathbf{Ric}_\mathcal{V}$ is bounded from below by a negative constant $-K$. If $x \in \B$, we denote $\mathcal{L}_x=\pi^{-1} ( \{ x \})$. Any geodesic $\gamma:[0,L] \to \mathbb{B}$ in the base space can be lifted into a geodesic in $\M$.  For $x \in \mathcal{L}_{\gamma(0)}$,   denote $\tau_\gamma (x)$ the endpoint of of the unique horizontal lift of $\gamma$ starting from $x$. Since the leaves are assumed to be totally geodesic, the map $\tau_\gamma$ induces an isometry between $\mathcal{L}_{\gamma(0)}$ and $\mathcal{L}_{\gamma(L)}$. Fix now a base point $x_0 \in \M$ and for $x \in \mathcal{L}_{\pi(x_0)}$ denote $\rho_\V (x)=d(x_0,x)$. If $ x \notin \mathcal{L}_{\pi(x_0)}$, then consider $\gamma:[0,L] \to \B$ to be the unique geodesic between $\pi(x_0)$ and $\pi(x)$ and define $\rho_\V(x)=d(\tau_\gamma(x_0), x)$. Consider also the function $\rho_\Ho (x)=d (\pi (x_0), \pi ( x))$ and finally define
 \[
 W(x)=1+\rho_\V(x)^2+\rho_\Ho (x)^2.
 \]
Obviously $W \ge 1$, is smooth, and such that $\{ W \le m \}$ is compact for every $m$. We have,
\[
\| \nabla W \| = 2 \rho_\V \| \nabla \rho_V \|	 +2 \rho_\Ho \| \nabla \rho_\Ho \|\le 	 2 \rho_\V 	 +2 \rho_\Ho \le C W,
\]
and 
\[
LW=2 \rho_\V \Dv \rho_\V+2\rho_\Ho \Dv \rho_\Ho +2 \| \nabla_\V \rho_\V \|^2 +2 \| \nabla_\V \rho_\Ho \|^2+YW.
\]
On the other hand from the Laplacian comparison theorem on the leaves, $$\Dv \rho_\V \le (m-1)\sqrt{\frac{K}{m-1} }\coth \left(\sqrt{\frac{K}{m-1}} \rho_\V \right),$$ so we have for some constant $C$,
\[
LW \le CW+YW.
\]
So, if we additionally assume that $Y$ is a Lipschitz vector field, that is, $\| DY \| \le C$, then $W$ satisfies all the requirements.

\

The assumption about the existence of the function $W$ such that $LW \le CW$ easily implies that $L$ is the generator of a Markov semigroup $(P_t)_{t \ge 0}$ that uniquely solves the heat equation in $L^\infty$. Moreover, consider a smooth and decreasing function $h: \R_{\ge 0} \to \R$ such that $h=1$ on $[0,1]$ and $h=0$ on $[2,+\infty)$. Denote then $h_n=h \left( \frac{W}{n} \right)$ and consider the compactly supported diffusion operator
\[
L_n=h^2_n L.
\]
Since $L_n$ is compactly supported, a Markov semigroup $P_t^n$ with generator  $L_n$ is easily constructed as the unique bounded  solution of  $\frac{\partial P_t^n f}{\partial t} =L_nP^n_t f$, $f \in L^\infty$. Then, for every bounded $f$,
\[
P_t^n f \to P_t f , \quad n \to \infty.
\]

\

We now prove our first gradient bound for the Kolmogorov type operator.

\begin{theorem}\label{GB18}
Let us assume that for some $K \in \R$,
\[
\mathbf{Ric}_\mathcal{V}  -DY \ge -K,
\]
then for every bounded and Lipchitz function $f \in C^\infty(\M)$, we have for $t \ge 0$
\[
\| \nabla P_t f \|^2 \le e^{2K t} P_t (\| \nabla f \|^2).
\]
\end{theorem}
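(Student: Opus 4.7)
The plan is to run a Bakry--\'Emery semigroup interpolation, using the Bochner-type inequality just established as the sole curvature input. Fix $t > 0$ and a bounded Lipschitz $f \in C^\infty(\M)$, and introduce the interpolation
\[
\Phi(s) = P_s\bigl(\|\nabla P_{t-s} f\|^2\bigr), \qquad s \in [0,t].
\]
A direct computation based on the heat equation $\partial_u P_u = L P_u$, together with $\partial_s P_{t-s}f = -LP_{t-s}f$ and the definition of $\mathcal{T}_2$, gives
\[
\Phi'(s) = P_s\bigl( L\|\nabla P_{t-s}f\|^2 - 2\langle \nabla P_{t-s}f, \nabla L P_{t-s}f\rangle\bigr) = 2\,P_s\bigl(\mathcal{T}_2(P_{t-s}f)\bigr).
\]
The Bochner inequality together with the assumption $\mathbf{Ric}_\mathcal{V} - DY \ge -K$ yields $\mathcal{T}_2(g) \ge -K\|\nabla g\|^2$ for every smooth $g$, so $\Phi'(s) \ge -2K\,\Phi(s)$. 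Hence $e^{2Ks}\Phi(s)$ is nondecreasing on $[0,t]$, giving $\Phi(0) \le e^{2Kt}\Phi(t)$, which is exactly $\|\nabla P_t f\|^2 \le e^{2Kt} P_t(\|\nabla f\|^2)$.

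The main technical obstacle is that $L$ is only hypoelliptic and not symmetric, so the spectral theorem is unavailable and one cannot differentiate $\Phi$ or apply Bochner to $P_{t-s}f$ without first securing enough regularity and integrability. I would handle this by first replacing $L$ by the compactly supported truncations $L_n = h_n^2 L$ built from the Lyapunov function $W$ as constructed just before the theorem. For each $L_n$ the semigroup $P_t^n$ is a classical Feller semigroup with smooth transition density, so the interpolation above is fully rigorous and produces
\[
\|\nabla P_t^n f\|^2 \le e^{2Kt}\, P_t^n\bigl(\|\nabla f\|^2\bigr).
\]
Finally I would pass to the limit $n \to \infty$. The Lyapunov bounds $LW \le CW$ and $\|\nabla W\| \le CW$ provide the tail control needed to upgrade the pointwise convergence $P_t^n f \to P_t f$ to convergence locally uniformly, while hypoellipticity of $L$ combined with standard parabolic a priori estimates yields convergence of $\nabla P_t^n f$ on compact sets. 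Taking limits in the displayed inequality delivers the stated gradient bound for $P_t$.
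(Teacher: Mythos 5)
Your overall strategy (Bakry--\'Emery interpolation plus truncation by the Lyapunov cutoffs $L_n=h_n^2L$) is the same as the paper's, but the key step of your truncation argument does not go through as claimed, and this is exactly the difficulty the paper's proof is organized around. You assert that for each $n$ the interpolation "produces $\|\nabla P_t^n f\|^2\le e^{2Kt}P_t^n(\|\nabla f\|^2)$". This presupposes that $L_n$ satisfies the same Bochner inequality $\mathcal{T}_2\ge -K\|\nabla\cdot\|^2$ as $L$, which is false: a direct computation (carried out in the paper) gives
\[
L_n\|\nabla g\|^2-2\langle\nabla L_n g,\nabla g\rangle=h_n^2\,\mathcal{T}_2(g)-4h_n(Lg)\,\langle\nabla h_n,\nabla g\rangle,
\]
so the cutoff introduces a cross term that cannot be absorbed into $-2Kh_n^2\|\nabla g\|^2$. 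Controlling it (via $\|\nabla\ln h_n\|\le C/h_n$ and $P_s^n(1/h_n^2)\le e^{Cs}/h_n^2$) only yields a differential inequality of the form $\Phi_n'(s)\ge-(2K^-+2)\Phi_n(s)-C$ with $C=C(f,t)$ independent of $n$ but not tending to $0$; integrating gives a uniform bound $\|\nabla P_t^nf\|\le C$, not the sharp estimate with constant $e^{2Kt}$. Passing to the limit in your displayed inequality is therefore passing to the limit in an inequality you have not established.

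The missing idea is the two-step structure of the paper's argument: the truncation is used only to obtain the \emph{qualitative} a priori conclusion that $P_t$ maps $C_0^\infty(\M)$ into bounded Lipschitz functions (via $|P_t^nf(x)-P_t^nf(y)|\le C\,d(x,y)$ and $n\to\infty$). Once one knows $\phi(x,s)=\|\nabla P_{T-s}f\|^2(x)$ is bounded, one runs the sharp Bakry--\'Emery argument directly on the \emph{original} semigroup: $L\phi+\partial_s\phi=2\mathcal{T}_2(P_{T-s}f)\ge-2K\phi$, and the parabolic comparison principle of Proposition \ref{P:missing_key2} (whose hypotheses require exactly the boundedness just established) together with Gronwall's lemma gives $\|\nabla P_tf\|^2\le e^{2Kt}P_t(\|\nabla f\|^2)$ with the correct constant. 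Your proposal conflates these two steps, and as written would at best deliver a non-sharp exponential rate plus an additive error.
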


\begin{proof}
We follow an approach  by F.Y. Wang \cite{FYW2}. We fix $t >0$, $n \ge 1$  and $f \in C^\infty(\M)$ compactly supported inside the set $\{ W \le n \}$. Consider the functional defined for $s \in [0,t]$ and evaluated at a fixed point $x_0$ in the set $\{ W \le n \}$:
\[
\Phi_n(s) =P^n_s ( \| \nabla P^n_{t-s} f \|^2).
\]
We have
\[
\Phi'_n(s) =P^n_s (L_n  \| \nabla P^n_{t-s} f \|^2-2 \langle  \nabla L_n P^n_{t-s} f,  \nabla P^n_{t-s} f\rangle ).
\]
Now, observe that by assumption, and denoting $K^{-}$ the negative part of $K$, 
\begin{align*}
 & L_n  \| \nabla P^n_{t-s} f \|^2-2 \langle  \nabla L_n P^n_{t-s} f,  \nabla P^n_{t-s} f\rangle \\
   =& h_n^2 \mathcal{T}_2 (  P^n_{t-s} f, P^n_{t-s} f) -4h_nL P^n_{t-s} f  \langle  \nabla h_n  ,  \nabla P^n_{t-s} f\rangle \\
 \ge & -2K h_n^2 \| \nabla P^n_{t-s} f \|^2 -4h_nL P^n_{t-s} f  \langle  \nabla h_n  ,  \nabla P^n_{t-s} f\rangle \\
  \ge &  -2K h_n^2 \| \nabla P^n_{t-s} f \|^2 -4 P^n_{t-s} L_n f  \langle  \nabla \ln h_n  ,  \nabla P^n_{t-s} f\rangle \\
  \ge & -2K h_n^2 \| \nabla P^n_{t-s} f \|^2-4 \| Lf \|_\infty \| \nabla \ln h_n \| \| \nabla P^n_{t-s} f \| \\
   \ge &   -(2K^{-}+2)   \| \nabla P^n_{t-s} f \|^2-2 \| Lf \|^2_\infty \| \nabla \ln h_n \|^2 . 
\end{align*}
The term $\| \nabla \ln h_n \|$ can be estimated as follows inside the set $\{ W \le 2n \}$
\[
\| \nabla \ln h_n \| =- \frac{1}{n h_n } h' \left( \frac{W}{n} \right) \| \nabla W \| \le \frac{C}{ h_n },
\]
where $C$ is a constant independent from $n$. On the other hand a direct computation and the assumptions on $W$ show that
\[
L_n \left( \frac{1}{h_n^2} \right) \le \frac{C}{h^2_n},
\]
where, again,  $C$ is a constant independent from $n$. This last estimate classically implies
\[
P^n_s \left( \frac{1}{h_n^2} \right) \le  \frac{e^{Cs}}{h^2_n}.
\]
Putting the pieces together we end up with a differential inequality
\[
\Phi'_n(s) \ge   -(2K^{-}+2) \Phi_n(s)-C,
\]
where $C$ now depends on $f$ and  $t$, but still does not depend on $n$. Integrating this inequality from $0$ to $t$, yields a bound of the type
\[
\| \nabla P^n_t f \| \le C,
\]
where  $C$ depends on $f$ and  $t$. This bounds holds uniformly on the set $\{ W \le n \}$. 
\

We now pick any $x,y \in \M, f\in C_0^\infty(\M)$ and $n$ big enough so that $x,y \in \{ W \le n \}$ and $\mathbf{Supp} (f) \subset  \{ W \le n \}$. We have from the previous inequality
\[
| P^n_t f (x) -P^n_t f (y) | \le C d(x,y),
\]
and thus, by taking the limit when $n \to \infty$,
\[
| P_t f (x) -P_t f (y) | \le C d(x,y).
\]
We therefore reach the important conclusion that $P_t$ transforms $C_0^\infty(\M)$ into a subset of the set of smooth and Lipschitz functions. With this conclusion in hands, we can now run the usual Bakry-Emery machinery.

Let $f \in C^\infty_0(\mathbb{M})$,  and $T>0$, and consider the function
\[
\phi (x,t)= \| \nabla P_{T-t} f \|^2(x),
\]
We have
\[
L \phi+\frac{\partial \phi}{\partial t} =2 \mathcal{T}_2(  P_{T-t} f,P_{T-t} f) \ge -2K \phi .
\]
Since we know that $\phi$ is bounded, we can use  a parabolic comparison principle similar  to the one in Theorem \ref{P:missing_key2} to conclude, thanks to Gronwall's inequality,
\[
\| \nabla P_t f \|^2 \le e^{2K t} P_t (\| \nabla f \|^2).
\]
This inequality is then easily extended to any bounded and Lipschitz function $f$.
\end{proof}

Under the same assumptions, we can actually get slightly stronger bounds
\begin{theorem}
Let us assume that for some $K \in \R$,
\[
\mathbf{Ric}_\mathcal{V}  -DY \ge -K,
\]
then for every non negative function $f\in C^\infty(\M)$ such that $\sqrt{f}$ is bounded and Lipschitz, we have for $t \ge 0$
\[
(P_t f) \| \nabla \ln P_t f \|^2 \le e^{2K t} P_t (f \| \nabla \ln f \|^2).
\]
\end{theorem}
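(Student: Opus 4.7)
My plan is to follow the template of the proof of Theorem~\ref{GB18}, but replacing the linear gradient functional $\|\nabla u\|^2$ by its logarithmic analog $u\|\nabla \log u\|^2 = \|\nabla u\|^2/u$. Replacing $f$ by $f+\varepsilon$ and sending $\varepsilon\to 0$ at the end (which is legitimate thanks to the assumption that $\sqrt{f}$ is bounded and Lipschitz) I may assume $u := P_{T-t}f > 0$, so that $v := \log u$ is smooth. Fix $T>0$ and consider on $\M\times[0,T]$ the spacetime function
\[
\phi(x,t) = (P_{T-t}f)(x)\,\|\nabla \log P_{T-t}f\|^2(x) = \frac{\|\nabla u\|^2}{u}.
\]
The strategy is to establish the parabolic inequality $L\phi + \partial_t\phi \ge -2K\phi$; once this is in hand, $e^{2Kt}\phi$ is a subsolution of the parabolic equation for $L$, and the parabolic comparison principle, applied exactly as at the end of the proof of Theorem~\ref{GB18} via the cut-off $L_n = h_n^2 L$ and the Lyapunov function $W$ (which provide the boundedness and Lipschitz regularity of $\phi$ that justify the comparison), then forces $\phi(\cdot,0) \le e^{2KT}P_T\phi(\cdot,T)$, which is exactly the claimed bound.

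The core calculation is the computation of $L\phi + \partial_t\phi$. Using $\partial_t u = -Lu$ together with the diffusion chain rule for $L$, whose carr\'e du champ is $\Gamma^L(f,g)=\langle\nabla_\mathcal{V}f,\nabla_\mathcal{V}g\rangle$, a direct expansion yields
\[
L\phi + \partial_t\phi \;=\; \frac{2\mathcal{T}_2(u)}{u} \;-\; \frac{2\,\Gamma^L(\phi,u)}{u}.
\]
Substituting $u = e^v$, using the derivation property of $\Gamma^L$, and collecting terms according to whether derivatives land on horizontal or vertical quantities, the right-hand side reorganizes into
\[
L\phi + \partial_t\phi \;=\; 2u\,\mathcal{T}_2(v) \;+\; 2u\bigl(\langle\nabla_\mathcal{V}v,\nabla_\mathcal{V}\|\nabla_\mathcal{H}v\|^2\rangle - \langle\nabla_\mathcal{H}v,\nabla_\mathcal{H}\|\nabla_\mathcal{V}v\|^2\rangle\bigr).
\]
The Bochner inequality of this section, $\mathcal{T}_2(v) \ge -K\|\nabla v\|^2$, controls the first term by $-2K u\|\nabla v\|^2 = -2K\phi$, so everything hinges on the bracket correction in the second term.

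The main (and in fact only nontrivial) obstacle is disposing of this bracket correction. But this is precisely the quantity featured in the B\'erard-Bergery--Bourguignon style commutation identity of Theorem~\ref{inter_commutation}, which asserts that it vanishes identically exactly when the foliation is totally geodesic. Since the foliation is totally geodesic by hypothesis, the correction is zero, and we obtain cleanly
\[
L\phi + \partial_t\phi = 2u\,\mathcal{T}_2(v) \ge -2K\phi.
\]
The parabolic comparison argument from Theorem~\ref{GB18} then yields
\[
(P_Tf)\|\nabla\log P_Tf\|^2 = \phi(\cdot,0) \le e^{2KT}P_T\phi(\cdot,T) = e^{2KT}P_T(f\|\nabla\log f\|^2),
\]
which is the desired inequality.
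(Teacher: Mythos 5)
Your proposal is correct and follows essentially the same route as the paper: the paper's own (very terse) proof introduces exactly the same functional $\phi=(P_{T-t}f)\|\nabla\ln P_{T-t}f\|^2$, notes that $L\phi+\partial_t\phi=2(P_{T-t}f)\,\mathcal{T}_2(\ln P_{T-t}f)$ because the totally geodesic commutation identity kills the cross term, and then concludes as in Theorem \ref{GB18}. You have merely filled in the intermediate computation and the regularization $f\mapsto f+\varepsilon$ that the paper leaves implicit.
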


\begin{proof}
Notice that if
\[
\phi (x,t)= (P_{T-t} f) \| \nabla \ln P_{T-t} f \|^2(x),
\]
we have
\[
L \phi+\frac{\partial \phi}{\partial t} =2 (P_{T-t} f) \mathcal{T}_2(  \ln P_{T-t} f,\ln P_{T-t} f),
\]
where we use the fact that  since the foliation is totally geodesic we have  for every smooth $g$,
\[
\langle \nabla_\Ho g , \nabla_\Ho \| \nabla_\V g \|^2 \rangle=\langle \nabla_\V g , \nabla_\V \| \nabla_\Ho g \|^2 \rangle.
\]
The proof follows then the same lines as the proof of Theorem \ref{GB18}.
\end{proof}

We now turn to the problem of convergence to an equilibrium for the semigroup $P_t$ and connects this problem to functional inequalities satisfied by the equilibrium measure.  Our first result is the counterpart to Kolmogorov type operators of the famous Bakry-\'Emery criterion \cite{BE}.

\begin{theorem}
Assume that for some $\rho >0$
\[
\mathbf{Ric}_\mathcal{V}  -DY \ge \rho
\]
and that there exists a probability measure $\mu$ on $\M$ such that for every $x \in \M$ and bounded $f$,
\[
\lim_{t \to +\infty} P_t f (x) =\int_\M f d\mu.
\]
Then,  $\mu$  satisfies the log-Sobolev inequality
\[
\int_{\mathbb{M}}f\|  \nabla \ln f \|^2  d\mu \ge \frac{1}{2\rho}  \left[ \int_{\mathbb{M}} f \ln f d\mu -\left( \int_{\mathbb{M}} f d\mu\right)\ln \left( \int_{\mathbb{M}} f d\mu\right) \right].
\]
\end{theorem}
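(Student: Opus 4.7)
The plan is to combine the previous gradient estimate
\[
(P_t f)\,\|\nabla \ln P_t f\|^2 \le e^{-2\rho t}\, P_t\bigl(f\,\|\nabla \ln f\|^2\bigr)
\]
(which is the previous theorem applied with $K = -\rho$) with the classical Bakry--\'Emery entropy-dissipation identity, modified to account for the non-symmetry of $L$. The first preparatory step is to verify that $\mu$ is invariant under $P_t$; this follows from the hypothesis by the semigroup property: for any bounded $g$ and any $s\ge 0$,
\[
\int_\M P_s g\, d\mu = \lim_{t \to \infty} P_{t+s}g(x) = \lim_{t \to \infty} P_t g(x) = \int_\M g\, d\mu.
\]

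Next, for strictly positive bounded Lipschitz $f$ (the general case follows by approximation with $f+\varepsilon$), I would introduce
\[
\psi(s) = P_s\bigl( P_{t-s} f \cdot \ln P_{t-s} f \bigr), \qquad 0 \le s \le t,
\]
and differentiate. Using $\partial_s P_{t-s}f = -L P_{t-s}f$ together with the diffusion chain rule $L(g\ln g) = (1+\ln g)Lg + \Gamma(g,g)/g$, where the carr\'e du champ of $L = \Delta_{\mathcal V}+Y$ is
$\Gamma(g,g) = \|\nabla_{\mathcal V} g\|^2$, a direct computation yields
\[
\psi'(s) = P_s\!\left( (P_{t-s}f)\,\|\nabla_{\mathcal V}\ln P_{t-s}f\|^2 \right).
\]
Integrating from $0$ to $t$ produces the pointwise identity
\[
P_t(f\ln f) - (P_t f)\ln P_t f = \int_0^t P_s\bigl( (P_{t-s}f)\,\|\nabla_{\mathcal V}\ln P_{t-s}f\|^2 \bigr)\,ds.
\]

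Now comes the key use of the gradient bound. Since $\|\nabla_{\mathcal V}\cdot\|^2 \le \|\nabla\cdot\|^2$, the integrand is dominated by $e^{-2\rho(t-s)} P_{t-s}(f\|\nabla\ln f\|^2)$; applying $P_s$ and using $P_s P_{t-s} = P_t$ gives
\[
P_t(f\ln f) - (P_t f)\ln P_t f \le \frac{1-e^{-2\rho t}}{2\rho}\,P_t\bigl(f\,\|\nabla\ln f\|^2\bigr).
\]
Integrating this inequality against $\mu$ and invoking the invariance of $\mu$ (which makes every $P_t$ on both sides drop out) yields
\[
\int_\M f\ln f\,d\mu - \int_\M (P_t f)\ln P_t f\, d\mu \le \frac{1-e^{-2\rho t}}{2\rho}\int_\M f\,\|\nabla\ln f\|^2\,d\mu.
\]
Letting $t\to\infty$, the convergence hypothesis gives $P_t f(x) \to \int f\,d\mu$, and bounded convergence yields the stated log-Sobolev inequality.

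The main obstacle I would expect is the passage to the limit $t\to\infty$ together with the justification of all the interchanges of $P_s$ with differentiation, since $L$ is merely subelliptic and not symmetric; this has to be handled through a careful localization using the Lyapunov function $W$ of the previous section, exactly as was done to establish the gradient bound itself. Everything else is essentially a transcription of the classical Bakry--\'Emery argument, with the crucial observation that even though the natural dissipation is only the \emph{vertical} Fisher information, the full-gradient Bochner inequality provided by $\mathbf{Ric}_{\mathcal V}-DY \ge \rho$ is strong enough to produce a log-Sobolev inequality for the \emph{full} Fisher information.
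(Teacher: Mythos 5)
Your proposal is correct and follows essentially the same route as the paper: both rest on the entropy-dissipation identity (whose dissipation is the vertical Fisher information), the domination of the vertical by the full Fisher information, the exponential decay $e^{-2\rho t}$ of the latter supplied by the gradient bound with $K=-\rho$, and integration in time together with the invariance of $\mu$ and the convergence hypothesis. The only cosmetic difference is that you perform the Bakry--\'Emery interpolation pointwise over $[0,t]$ before integrating against $\mu$, whereas the paper integrates against $\mu$ first and then over $t\in(0,\infty)$.
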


\begin{proof}
Let $g\in C_0^\infty(\M)$, $ g \ge 0$ and denote $f=g+\varepsilon$ where $\varepsilon >0$. Since $\mu$ needs to be an invariant measure for $L$, we have
\begin{align*}
\int_\bM f \ln f d\mu -\int_\bM f d\mu \ln \int_\bM f d\mu &=-\int_0^{+\infty} \frac{\partial}{\partial t} \int_\bM (P_t f) (\ln P_t f) d\mu dt \\
 & =-\int_0^{+\infty}   \int_\bM (L P_t f) ( \ln P_t f) d\mu dt \\
 &=\int_0^{+\infty}   \int_\bM \frac{\| \nabla_\V P_t f\|^2}{P_t f} d\mu dt \\
 &=\int_0^{+\infty}   \int_\bM P_t f  \| \nabla_\V  \ln P_t f \|^2 d\mu dt \\
 &\le \int_0^{+\infty} e^{-2\rho t } dt \int_\bM   \| f \nabla  \ln f \|^2  d\mu \\
 &\le \frac{1}{2\rho} \int_\bM  f  \|  \nabla  \ln f \|^2  d\mu.
\end{align*}
We extend then the inequality to any non negative $f$ such that $\sqrt{f}$ is bounded and Lipschitz.
\end{proof}

We now study the converse question which is to understand how a functional inequality satisfied by an invariant measure implies the convergence to equilibrium of the semigroup.

The easiest convergence to deal with is the $L^2$ convergence and, as it is well-known, is connected to the Poincar\'e inequality.

\begin{theorem}\label{poinca}
Assume that there exist two constants $\rho_1 \ge 0$, $\rho_2 >0$ such that for every $X \in \Gamma^\infty(T\M)$,
\[
\langle (\mathbf{Ric}_\mathcal{V}  -DY )(X) , X \rangle \ge -\rho_1 \| X \|_\V^2+\rho_2 \| X \|_\Ho^2
\]
Assume moreover that  the operator $L$ admits  an invariant  probability measure $\mu$ that satisfies the Poincar\'e inequality
\[
\int_{\mathbb{M}}\| \nabla f \|^2 d\mu \ge \kappa \left[ \int_{\mathbb{M}} f^2 d\mu -\left( \int_{\mathbb{M}} f d\mu\right)^2 \right].
\]
Then, for every bounded and Lipschitz function $f$ such that  $\int_{\mathbb{M}} f d\mu=0$,
\begin{align*}
  (\rho_1+\rho_2)\int_{\mathbb{M}} (P_t f)^2d\mu +\int_{\mathbb{M}} \| \nabla P_t f \|^2d\mu  
 \le   e^{-\lambda t}\left(   (\rho_1+\rho_2) \int_{\mathbb{M}} f^2d\mu + \int_{\mathbb{M}} \| \nabla f \|^2 d\mu \right) ,
\end{align*}
where $\lambda=\frac{2\rho_2 \kappa}{\kappa+\rho_1+\rho_2}$.
\end{theorem}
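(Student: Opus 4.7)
The strategy is the standard entropy/energy dissipation scheme, but applied to the mixed functional
\[
A(t)=(\rho_1+\rho_2)\int_\M (P_tf)^2\,d\mu+\int_\M \|\nabla P_tf\|^2\,d\mu,
\]
which is chosen precisely so that the $\rho_1$-defect on the vertical gradient appearing in Bochner's inequality is absorbed by the $L^2$-dissipation. Set $u_t=P_tf$; by invariance of $\mu$ we have $\int u_t\,d\mu=\int f\,d\mu=0$ for all $t\ge 0$, so the Poincar\'e inequality applies to $u_t$.

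First I compute $\tfrac{d}{dt}\int u_t^2\,d\mu$. Since the carr\'e du champ of $L=\Dv+Y$ is purely vertical (the first-order part $Y$ contributes no quadratic form), we have $L(u^2)=2uLu+2\|\nabla_\V u\|^2$; combining this with invariance $\int L(u^2)\,d\mu=0$ yields
\[
\frac{d}{dt}\int_\M u_t^2\,d\mu=-2\int_\M \|\nabla_\V u_t\|^2\,d\mu.
\]
Next I differentiate $\int\|\nabla u_t\|^2\,d\mu$. From the Bochner identity $2\langle\nabla u,\nabla Lu\rangle=L\|\nabla u\|^2-2\mathcal{T}_2(u)$, invariance of $\mu$ kills the $L\|\nabla u\|^2$ term, and the curvature assumption $\mathbf{Ric}_\V-DY\ge -\rho_1\,(\cdot)_\V+\rho_2\,(\cdot)_\Ho$ gives
\[
\frac{d}{dt}\int_\M \|\nabla u_t\|^2\,d\mu\le 2\rho_1\int_\M \|\nabla_\V u_t\|^2\,d\mu-2\rho_2\int_\M \|\nabla_\Ho u_t\|^2\,d\mu.
\]
Adding the two estimates with weights $(\rho_1+\rho_2)$ and $1$, the vertical terms collapse to $-2\rho_2\int\|\nabla_\V u_t\|^2d\mu$, and together with the horizontal term I obtain the clean bound
\[
A'(t)\le -2\rho_2\int_\M \|\nabla u_t\|^2\,d\mu.
\]

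Finally I convert the energy dissipation on the right-hand side into decay of $A(t)$ itself. By the Poincar\'e inequality applied to $u_t$ (which has zero mean), $\int\|\nabla u_t\|^2\,d\mu\ge\kappa\int u_t^2\,d\mu$. Writing $\int\|\nabla u_t\|^2\,d\mu$ as a convex combination of itself and the Poincar\'e lower bound with weights $\tfrac{\rho_1+\rho_2}{\kappa+\rho_1+\rho_2}$ and $\tfrac{\kappa}{\kappa+\rho_1+\rho_2}$ yields
\[
\int_\M \|\nabla u_t\|^2\,d\mu\ge \frac{\kappa}{\kappa+\rho_1+\rho_2}\,A(t),
\]
so $A'(t)\le -\lambda A(t)$ with $\lambda=\tfrac{2\rho_2\kappa}{\kappa+\rho_1+\rho_2}$, and Gronwall finishes the argument.

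The only real obstacle is the justification of the two differentiation-under-the-integral computations and the integrations by parts, since $L$ is hypoelliptic and non-symmetric and $\mu$ is only assumed invariant. I would handle this by approximating $f$ by smooth compactly supported functions and working with the compactly supported operator $L_n=h_n^2 L$ and its semigroup $P_t^n$ from the proof of Theorem \ref{GB18}, using the Lyapunov function $W$ to bound boundary terms; the gradient bound of Theorem \ref{GB18} and its analogue established just before Theorem \ref{poinca} provide the $L^2$ and pointwise control needed to pass to the limit. Once these analytic preliminaries are in place, the algebraic manipulation above applies without change.
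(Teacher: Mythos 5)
Your proof is correct and is essentially the argument of the paper: the paper organizes the same computation through the interpolated functional $\Psi(s)=(\rho_1+\rho_2)P_s\bigl((P_{t-s}f)^2\bigr)+P_s\bigl(\|\nabla P_{t-s}f\|^2\bigr)$, which after integration against the invariant measure is exactly your $A(t-s)$, so your differential inequality $A'(t)\le -2\rho_2\int\|\nabla P_tf\|^2d\mu$ and the convex-combination use of Poincar\'e to reach $A'\le-\lambda A$ reproduce the paper's Gronwall argument. Your closing remarks on justifying the differentiations via the truncated operators $L_n$ and the Lyapunov function are also consistent with how the paper handles (or defers) these technicalities.
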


 \begin{proof}
 We fix $t>0$ and consider the functional
  \[
  \Psi(s) =(\rho_1+\rho_2)P_s( (P_{t-s} f)^2 ) + P_s( \| \nabla P_{t-s}f \|^2 ).
  \]
  By repeating the arguments of the proof of the previous theorem, we get the differential inequality
  \[
  \Psi(s)-\Psi(0) \ge 2 \rho_2 \int_0^s P_u(\| \nabla P_{t-u}f\|^2) )du.
  \]
 Denote now $\varepsilon=\frac{\rho_1 +\rho_2}{\kappa +\rho_1 +\rho_2}$. We have  from the assumed Poincar\'e inequality
\[
\varepsilon\int_{\mathbb{M}} \| \nabla P_{t-u}f\|^2  d\mu \ge \varepsilon \kappa  \int_{\mathbb{M}} (P_{t-u}f)^2 d\mu .
\]
Therefore, denoting $\Theta(s)=\int_{\mathbb{R}^{2n}} \Psi(s)d\mu$, we obtain
\begin{align*}
 \Theta(s)-\Theta(0)&  \ge 2 \eta(1-\varepsilon)\int_0^s  \int_{\mathbb{M}} \|\nabla P_{t-u}f\|^2  d\mu du+2\varepsilon \kappa \int_0^s  \int_{\mathbb{M}} (P_{t-u}f)^2 d\mu du \\
  &\ge \lambda \int_0^s \Theta (u) du.
\end{align*}
We conclude then with Gronwall's differential inequality.
  \end{proof}
  
We can similarly prove a convergence to equilibrium in the entropic distance provided the assumption that the invariant measure satisfies a log-Sobolev inequality.

\begin{theorem}\label{logsob}
Assume that there exist two constants $\rho_1 \ge 0$, $\rho_2 >0$ such that for every $X \in \Gamma^\infty(T\M)$,
\[
\langle (\mathbf{Ric}_\mathcal{V}  -DY )(X) , X \rangle \ge -\rho_1 \| X \|_\V^2+\rho_2 \| X \|_\Ho^2
\]
Assume moreover  that  the operator $L$ admits  an invariant  probability measure $\mu$ that satisfies the log-Sobolev inequality
\[
\int_{\mathbb{M}}f\|  \nabla \ln f \|^2  d\mu \ge \kappa \left[ \int_{\mathbb{M}} f \ln f d\mu -\left( \int_{\mathbb{M}} f d\mu\right)\ln \left( \int_{\mathbb{M}} f d\mu\right) \right].
\]
Then for every positive and bounded $f \in C^\infty(\mathbb{M})$, such that $\| \nabla \sqrt{f} \|$ is bounded and $\int_{\mathbb{M}} f d\mu=1$,
\begin{align*}
 & 2(\rho_1+\rho_2)\int_{\mathbb{M}} P_t f \ln P_t f d\mu +\int_{\mathbb{M}}  P_tf \| \nabla \ln P_tf \|^2 d\mu  \\
 \le &  e^{-\lambda t}\left( 2 (\rho_1+\rho_2) \int_{\mathbb{M}} f \ln f d\mu + \int_{\mathbb{M}}f  \| \nabla \ln f \|^2 d\mu \right) ,
\end{align*}
where $\lambda=\frac{2\rho_2 \kappa}{\kappa+2(\rho_1 +\rho_2)}$.

\end{theorem}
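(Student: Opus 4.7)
The strategy mirrors the proof of Theorem \ref{poinca}, with the squared $L^2$-norm replaced by the relative entropy. Fix $T>0$, set $g_s := P_{T-s}f$, and introduce
\begin{equation*}
\Theta(s) := 2(\rho_1+\rho_2)\int_\M g_s \ln g_s\, d\mu + \int_\M g_s \|\nabla \ln g_s\|^2 \, d\mu, \qquad s \in [0,T].
\end{equation*}
Then $\Theta(0)$ equals the left-hand side of the claim (with $t=T$) and $\Theta(T)$ equals the parenthesized quantity on the right, so it suffices to establish the differential inequality $\Theta'(s) \ge \lambda\, \Theta(s)$ and conclude by Gronwall's lemma.

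\paragraph{Computing $\Theta'(s)$.}
For the entropy piece, since $\mu$ is $L$-invariant and the carr\'e du champ of $L=\Dv+Y$ is $\Gamma^\V$, an integration by parts against $\mu$ together with the chain rule $L(\ln g_s)=Lg_s/g_s-\|\nabla_\V g_s\|^2/g_s^2$ yields the dissipation identity
\begin{equation*}
\frac{d}{ds}\int_\M g_s \ln g_s \, d\mu = \int_\M g_s \|\nabla_\V \ln g_s\|^2 \, d\mu.
\end{equation*}
For the Fisher-information piece, I would establish the pointwise formula
\begin{equation*}
L\phi_s + \partial_s \phi_s = 2 g_s \, \mathcal{T}_2(\ln g_s), \qquad \phi_s := g_s \|\nabla \ln g_s\|^2,
\end{equation*}
by direct calculation (analogous to Lemma \ref{L:derivatives}); the commutation identity $\langle \nabla_\Ho u, \nabla_\Ho \|\nabla_\V u\|^2\rangle = \langle \nabla_\V u, \nabla_\V \|\nabla_\Ho u\|^2\rangle$ of Theorem \ref{inter_commutation}, valid precisely because the foliation is totally geodesic, is what causes the spurious cross-terms to cancel. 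Integrating against $\mu$ (the $L\phi_s$ term drops out by invariance) gives
\begin{equation*}
\frac{d}{ds}\int_\M g_s \|\nabla \ln g_s\|^2 \, d\mu = 2\int_\M g_s \, \mathcal{T}_2(\ln g_s) \, d\mu.
\end{equation*}
The Bochner-type inequality of the first theorem of this section, combined with the curvature hypothesis, yields $\mathcal{T}_2(\ln g_s) \ge -\rho_1 \|\nabla_\V \ln g_s\|^2 + \rho_2 \|\nabla_\Ho \ln g_s\|^2$. Adding $2(\rho_1+\rho_2)$ times the entropy derivative, the $\rho_1$-cross terms cancel exactly, leaving
\begin{equation*}
\Theta'(s) \ge 2\rho_2 \int_\M g_s \|\nabla \ln g_s\|^2 \, d\mu.
\end{equation*}

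\paragraph{Closing with log-Sobolev.}
By invariance, $\int g_s\, d\mu = \int f\, d\mu = 1$, so the log-Sobolev hypothesis simplifies to $\int g_s \|\nabla \ln g_s\|^2 d\mu \ge \kappa \int g_s \ln g_s \, d\mu$. Writing
\begin{equation*}
2\rho_2 \int_\M g_s \|\nabla \ln g_s\|^2 d\mu = 2\rho_2(1-\varepsilon)\int_\M g_s \|\nabla \ln g_s\|^2 d\mu + 2\rho_2 \varepsilon \int_\M g_s \|\nabla \ln g_s\|^2 d\mu,
\end{equation*}
applying log-Sobolev to the second piece, and matching coefficients $2\rho_2(1-\varepsilon)=\lambda$ and $2\rho_2 \varepsilon \kappa = 2\lambda(\rho_1+\rho_2)$ forces $\varepsilon = \tfrac{2(\rho_1+\rho_2)}{\kappa+2(\rho_1+\rho_2)}$ and $\lambda = \tfrac{2\rho_2\kappa}{\kappa+2(\rho_1+\rho_2)}$, as announced. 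Gronwall then gives $\Theta(0) \le e^{-\lambda T}\Theta(T)$, which is the claim.

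\paragraph{Main obstacle.}
The hard part is the rigorous justification of the formal manipulations above under the minimal regularity assumed on $f$ (positive, bounded, $\|\nabla\sqrt{f}\|$ bounded): one needs $g_s$ to remain smooth and bounded below so that $\ln g_s$ is well-defined, and every integral must converge and be differentiable in $s$ under $\int$. Following the template of the proof of Theorem \ref{GB18}, this is handled by regularizing $f$ as $f+\varepsilon$ and localizing via a cutoff built from the Lyapunov function $W$; one closes the argument on the compactly-supported approximants and passes to the limit using the $L^\infty$-contraction of $P_s$ and finiteness of $\mu$.
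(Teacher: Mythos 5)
Your proof is correct and is exactly the adaptation of the paper's proof of Theorem \ref{poinca} that the author intends (the paper states this theorem with no proof, saying only ``we can similarly prove''): the entropy/Fisher-information functional with weight $2(\rho_1+\rho_2)$, the dissipation identity, the $\mathcal{T}_2$ lower bound, the $\varepsilon$-splitting via log-Sobolev, and Gronwall all check out, including the constant $\lambda$. The regularization issues you flag at the end are real and are handled in the paper exactly as you describe, via the Lyapunov function $W$ and the approximating semigroups $P_t^n$.
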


\subsection{The kinetic Fokker-Planck equation}

In this section we study the kinetic Fokker-Planck equation which is an important example of equation to which our methods apply.

Let $V:\mathbb{R}^n \to \mathbb{R}$ be a smooth function. The kinetic Fokker-Planck equation with confinement potential $V$ is the parabolic  partial differential equation:
\begin{equation}\label{FP2}
\frac{\partial h}{\partial t}=\Delta_v h - v \cdot \nabla_v h+\nabla_x V \cdot \nabla_v h -v\cdot \nabla_x h , \quad (x,v) \in \mathbb{R}^{2n}.
\end{equation}
The operator
\[
L=\Delta_v  - v \cdot \nabla_v +\nabla_x V \cdot \nabla_v -v\cdot \nabla_x 
\]
is a Kolmogorov type operator. The foliation on $\R^{2n}$ which is relevant here is not the trivial one. We endow $\R^{2n}$ with the translation invariant metric that makes
\[
\left\{ 2\frac{\partial}{\partial x_i}+\frac{\partial }{\partial v_i}, \frac{\partial }{\partial v_i}, 1 \le i \le n \right\},
\]
an orthonormal basis at any point. We consider then the foliations with leaves $\{ (x,v), v \in \R^n \}$. It is obviously totally geodesic.

\

 The operator $L$ admits for invariant measure the measure
\[
d\mu=e^{-V(x)-\frac{\| v \|^2}{2}} dxdv.
\]
It is readily checked that $L$ is not symmetric with respect to $\mu$. The operator $L$ is the generator of a strongly continuous sub-Markov semigroup $(P_t)_{t \ge 0}$. If we assume that the Hessian $\nabla^2 V$ is bounded, which we do in the sequel, then $P_t$ is Markovian . 

Observe that since $\nabla V$ is Lipschitz, the function $W(x,v)=1+\| x \|^2 +\| v\|^2$ is  such that, for some constant $C>0$, $LW \le C W$ and $\| \nabla W \| \le C W$.

The quadratic form $\mathcal{T}_2$ is easy to compute in this case and we obtain then the following result that was first obtained in \cite{baudoin-bakry}:

 \begin{proposition}\label{lowerbound}
 For every $0 < \eta <\frac{1}{2}$, there exists $K(\eta) \ge -\frac{1}{2}$ such that for every $f \in C^\infty(\mathbb{R}^{2n})$, 
 \[
 \mathcal{T}_2(f,f) \ge -K(\eta) \| \nabla_\V f \|^2+\eta \| \nabla_\Ho f \|^2.
 \]
 \end{proposition}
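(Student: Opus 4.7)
The plan is to combine the Bochner-type identity $\mathcal{T}_2(f,f) \ge (\mathbf{Ric}_\V - DY)(\nabla f, \nabla f)$ from the preceding theorem with a direct explicit computation of both terms on the right-hand side for this operator. The key preliminary observations are that (i) the leaves of the foliation are the slices $\{x\} \times \R^n$ equipped with the Euclidean metric coming from the orthonormal vertical frame $Z_i = \partial_{v_i}$, so they are flat and $\mathbf{Ric}_\V \equiv 0$, and (ii) the global metric on $\R^{2n}$ is itself flat and translation-invariant in the constant orthonormal frame $\{X_i = 2\partial_{x_i} + \partial_{v_i},\, Z_i = \partial_{v_i}\}$, so the Levi-Civita connection $D$ acts as componentwise directional differentiation in this frame. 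Thus only the evaluation of $DY$ requires genuine work.

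To carry out that computation, I would first use $\partial_{x_i} = \frac{1}{2}(X_i - Z_i)$ and $\partial_{v_i} = Z_i$ to rewrite the drift as
\[
Y = -\sum_i \tfrac{v_i}{2} X_i + \sum_i \left(-\tfrac{v_i}{2} + \partial_{x_i} V(x)\right) Z_i,
\]
and then differentiate component-by-component to obtain
\[
D_{X_j} Y = -\tfrac{1}{2} X_j - \tfrac{1}{2} Z_j + 2 \sum_i V_{x_i x_j}(x) Z_i, \qquad D_{Z_j} Y = -\tfrac{1}{2} X_j - \tfrac{1}{2} Z_j.
\]
Expanding $\nabla f = \sum_i (X_i f) X_i + \sum_i (Z_i f) Z_i$ and taking the quadratic form, the diagonal pieces contribute $-\frac{1}{2}(\|\nabla_\Ho f\|^2 + \|\nabla_\V f\|^2)$, while the off-diagonal $X$-$Z$ block produces a symmetric cross contribution $\sum_i (X_i f)(Z_i f)$ and a Hessian-of-$V$ coupling of the form $-2\sum_{i,j} V_{x_i x_j}(X_j f)(Z_i f)$.

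The final step is to absorb those cross-terms. Since $\nabla^2 V$ is bounded by hypothesis, say $\|\nabla^2 V\|_\infty \le M$, both cross-terms in $-DY(\nabla f, \nabla f)$ are controlled in absolute value by $(1+2M)\|\nabla_\Ho f\|\cdot\|\nabla_\V f\|$. Given $\eta \in (0,1/2)$, Young's inequality with weight $\delta = \frac{1}{2}-\eta$ then gives
\[
(1+2M)\|\nabla_\Ho f\| \cdot \|\nabla_\V f\| \le \delta \|\nabla_\Ho f\|^2 + \frac{(1+2M)^2}{4\delta} \|\nabla_\V f\|^2,
\]
so that
\[
\mathcal{T}_2(f,f) \ge -DY(\nabla f, \nabla f) \ge \eta \|\nabla_\Ho f\|^2 - K(\eta) \|\nabla_\V f\|^2
\]
with $K(\eta) = \frac{(1+2M)^2}{2-4\eta} - \frac{1}{2}$, which is manifestly $\ge -\frac{1}{2}$. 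The only real obstacle is bookkeeping: one must carefully translate between the standard coordinate vectors $\partial_{x_i}, \partial_{v_i}$ and the orthonormal frame $\{X_i, Z_i\}$, and in particular recognize that the metric with respect to which the Bochner formula is applied is the non-standard one for which $X_i$ and $Z_i$ are orthonormal; once that is done, the rest is a direct unpacking.
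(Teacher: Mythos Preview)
Your proof is correct and follows precisely the approach the paper intends: the paper itself gives no details beyond ``the quadratic form $\mathcal{T}_2$ is easy to compute in this case'' and a reference to \cite{baudoin-bakry}, and you have filled in exactly that computation by invoking the Bochner inequality $\mathcal{T}_2(f)\ge(\mathbf{Ric}_\V-DY)(\nabla f,\nabla f)$ from the preceding theorem and evaluating $-DY$ in the orthonormal frame $\{X_i,Z_i\}$. The bookkeeping (change of frame, signs of the cross terms, and the Young-inequality absorption with $\delta=\tfrac12-\eta$) is all correct, and your explicit constant $K(\eta)=\frac{(1+2M)^2}{2-4\eta}-\tfrac12$ visibly satisfies $K(\eta)\ge -\tfrac12$.
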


 The previous lemma shows that Theorems \ref{poinca} and \ref{logsob} thus apply to the kinetic Fokker-Planck operator. We mention that the entropic of the semigroup under the assumption that the invariant measure satisfies a log-Sobolev inequality was first established by Villani  (see Theorem 35 in \cite{Villani1}) but the rate of convergence given by Theorems \ref{poinca} and \ref{logsob} is more explicit.


\begin{thebibliography}{10}
 
 \bibitem{ABGR}
A.  Agrachev, U. Boscain, J.P. Gauthier \& F. Rossi, \emph{The intrinsic hypoelliptic Laplacian and its heat kernel on unimodular Lie groups.} J. Funct. Anal. 256 (2009), no. 8, 2621-2655
  
\bibitem{Bak}
D.  Bakry, 
\emph{Un crit\`ere de non-explosion pour certaines diffusions sur une vari\'et\'e riemannienne compl\`ete}. (French. English summary) [A non-explosion criterium for some diffusions on a complete Riemannian manifold] 
C. R. Acad. Sci. Paris S\'er. I Math. 303 (1986), no. 1, 23-26. 

\bibitem{bakry-stflour}
D. Bakry, \emph{L'hypercontractivit\'e et son utilisation en
th\'eorie des semigroupes}, Ecole d'Et\'e de Probabilites de
St-Flour, Lecture Notes in Math, (1994).

\bibitem{BBBQ} D. Bakry, F. Baudoin,  M. Bonnefont, B. Qian : \emph{Subelliptic Li-Yau estimates on three dimensional model spaces}, Potential Theory and Stochastics in Albac, Aurel Cornea Memorial Volume (2009).
 
 \bibitem{BE} 
D. Bakry \& M. Emery, \emph{Diffusions hypercontractives}, S\'emin. de probabilit\'es XIX, Univ. Strasbourg, Springer, 1983.
 

\bibitem{BL}
D. Bakry \& M. Ledoux:  \emph{Sobolev inequalities and Myers diameter theorem for an abstract Markov generator}. Duke Math. J. 85 (1996), no. 1, 253-270

\bibitem{BL2}
D. Bakry \& M. Ledoux:  \emph{A logarithmic Sobolev form of the Li-Yau parabolic inequality.} Rev. Mat. Iberoam. 22 (2006), no. 2, 683-702.
 
 \bibitem{B}
 F. Baudoin, \emph{Stochastic analysis on sub-Riemannian manifolds with transverse symmetries}, 2014, To appear in Annals of Probability, Arxiv preprint,  http://arxiv.org/abs/1402.4490
 
 \bibitem{baudoin-bakry}
 F. Baudoin, \emph{Bakry-Emery meet Villani}, 2013, Arxiv preprint, http://arxiv.org/abs/1308.4938
  
\bibitem{BB}
F. Baudoin \&  M. Bonnefont, \emph{Log-Sobolev inequalities for subelliptic operators satisfying a generalized curvature dimension inequality}, Journal of Functional Analysis, Volume 262 ~(2012), 2646--2676.

\bibitem{BBG}
F. Baudoin,  M. Bonnefont \& N. Garofalo, \emph{A sub-Riemannian curvature-dimension inequality, volume doubling property and the Poincar\'e inequality}, Math. Ann. 358 (2014), no. 3-4, 833-860.

\bibitem{BBGM}
F. Baudoin,  M. Bonnefont, I. Munive \& N. Garofalo, \emph{Volume and distance comparison theorems for sub-Riemannian manifolds}, To appear in Journal of Functional Analysis (2014), Arxiv preprint, http://arxiv.org/abs/1211.0221

\bibitem{BG0}
F. Baudoin \& N. Garofalo, \emph{Generalized Bochner formulas and Ricci lower bounds for sub-Riemannian manifolds of rank two}, http://arxiv.org/abs/0904.1623

 \bibitem{BG} 
F. Baudoin \& N. Garofalo, \emph{Curvature-dimension inequalities and Ricci lower bounds for sub-Riemannian manifolds with transverse symmetries},To appear in the Journal of the EMS, Arxiv preprint, http://arxiv.org/abs/1101.3590

\bibitem{BG2}
F. Baudoin \& N. Garofalo,
\emph{A note on the boundedness of Riesz transform for some subelliptic operators.} Int. Math. Res. Not. IMRN 2013, no. 2, 398-421. 

\bibitem{BK}
F. Baudoin \& B. Kim, \emph{Sobolev, Poincar\'e and isoperimetric inequalities for subelliptic diffusion operators satisfying a generalized curvature dimension inequality}, Revista Matematica Iberoamericana, 30, (2014), 1, 109-131

\bibitem{BK2}
F. Baudoin \& B. Kim, \emph{The Lichnerowicz-Obata theorem on sub-Riemannian manifolds with transverse symmetries}, To appear in Journal of Geometric Analysis, (2014).

\bibitem{BKW}
F. Baudoin, B. Kim \& J. Wang, \emph{Transverse Weitzenbšck formulas and curvature dimension inequalities on Riemannian foliations with totally geodesic leaves}, 2014, http://arxiv.org/abs/1408.0548

\bibitem{BW1}
F. Baudoin \& J. Wang, \emph{The subelliptic heat kernel on the CR sphere}. Math. Z. 275 (2013), no. 1-2, 135-150

\bibitem{BW2}
F. Baudoin \& J. Wang, \emph{The Subelliptic Heat Kernels of the Quaternionic Hopf Fibration}. Potential Analysis (2014).

\bibitem{BGG} Beals, R., Gaveau, B., Greiner, P. C. \emph{Hamilton-Jacobi theory and the heat kernel on Heisenberg groups,} J. Math. Pures Appl. 79, 7 (2000) 633-689

\bibitem{BeBo}
L. B\'erard-Bergery, J.P. Bourguignon,  \emph{Laplacians and Riemannian submersions with totally geodesic fibres}. Illinois J. Math. 26 (1982), no. 2, 181-200

\bibitem{Besse} 
A. Besse, \emph{Einstein manifolds.}  Reprint of the 1987 edition. Classics in Mathematics. Springer-Verlag, Berlin, 2008. xii+516 pp.

\bibitem{Bonnefont1}
M. Bonnefont, \emph{Functional Inequalities for Subelliptic Heat Kernels}, Phd dissertation, Paul Sabatier University, Toulouse, 2009

\bibitem{Bonnefont2}
M. Bonnefont, \emph{The subelliptic heat kernel on SL(2,R) and on its universal covering: integral representations and some functional inequalities}. Potential analysis. 36 (2012), no. 2, 275-300.

\bibitem{CMV}
D. C.  Chang, I Markina, A. Vasil'ev,  \emph{Hopf fibration: geodesics and distances}. J. Geom. Phys. 61 (2011), no. 6, 986-1000
 
\bibitem{Davies} 
E.B. Davies, \emph{Heat kernels and spectral theory}. Cambridge Tracts in Mathematics, 92. Cambridge University Press, Cambridge, 1989.

\bibitem{Dragomir} 
S. Dragomir \& G. Tomassini, \emph{Differential geometry and analysis on CR manifolds}, Birkh\"auser, Vol. 246, 2006.

\bibitem{EH} J-P. Eckmann, M. Hairer, \emph{Spectral Properties of Hypoelliptic Operators}, Communications in Mathematical Physics, April 2003, Volume 235, Issue 2, pp 233-253

\bibitem{escobales}
R. Escobales, \emph{ Riemannian submersions with totally geodesic fibers. } J. Differential Geom. 10 (1975), 253-276.
 

\bibitem{Elworthy} K.D. Elworthy, \emph{Decompositions of diffusion operators and related couplings}, preprint 2014

\bibitem{Fu}
M. Fukushima, \emph{Dirichlet forms and Markov processes},
Amsterdam-Oxford-New York, North Holland, 1980.



\bibitem{Gaveau} B. Gaveau, \emph{Principe de moindre action, propagation de la chaleur et estim\'ees sous elliptiques sur certains groupes nilpotents}. Acta Math. 139 (1977), no. 1-2, 95-153

\bibitem{Geller} D. Geller, \textit{The Laplacian and the Kohn Laplacian for the sphere}, J. Differential Geometry. \textbf{15} (1980) 417-435

\bibitem{GT1} E. Grong, A. Thalmaier, \emph{Curvature-dimension inequalities on sub-Riemannian manifolds obtained from Riemannian foliations, Part I}, http://arxiv.org/abs/1408.6873

\bibitem{GT2} E. Grong, A. Thalmaier, \emph{Curvature-dimension inequalities on sub-Riemannian manifolds obtained from Riemannian foliations, Part II}, http://arxiv.org/abs/1408.6872

 \bibitem{HN1}B. Helffer, F. Nier: \emph{Hypoelliptic estimates and spectral theory for Fokker-Planck operators and Witten Laplacians.} Lecture Notes in Mathematics, 1862. Springer-Verlag, Berlin, (2005).
 

\bibitem{Hermann} R. Hermann, \emph{A sufficient condition that a mapping of Riemannian manifolds be a fibre bundle},  Proc. Amer. Math. Soc., vol II, (160), pp. 236-242.

\bibitem{Hladky} R. Hladky, \emph{Connections and Curvature in sub-Riemannian geometry}. Houston J. Math, 38 (2012), no. 4, 1107-1134

\bibitem{Kim} B. Kim, \emph{Poincar\'e inequality and the uniqueness of solutions for the heat equation associated with subelliptic diffusion operators}, http://arxiv.org/abs/1305.0508

\bibitem{Li1}
H.-Q. Li, \emph{Estimation optimale du gradient du semi-groupe de la chaleur sur le groupe de Heisenberg}, J. Funct. Anal. 236 (2) (2006) 369-394. 

\bibitem{Li2}
H.-Q. Li, \emph{Estimations asymptotiques du noyau de la chaleur sur les groupes de Heisenberg}, C. R. Acad. Sci. Ser. I Math. (2007) 497-502. 

\bibitem{LY}
P. Li \& S. T. Yau, \emph{On the parabolic kernel of the
Schr\"odinger operator}, Acta Math., \textbf{156}~(1986), 153-201.

\bibitem{MM}
M. G. Molina \& I. Markina, \emph{Sub-Riemannian geodesics and heat operator on odd dimensional spheres}. Anal. Math. Phys. 2 (2012), no. 2, 123-147.


\bibitem{reed1}  M. Reed \& B. Simon, \emph{Methods of modern mathematical physics. Functional analysis}. Second edition. Academic Press, Inc. [Harcourt Brace Jovanovich, Publishers], New York, 1980.

\bibitem{Strichartz} R. Strichartz, \emph{Analysis of the Laplacian on the complete Riemannian manifold}, Journal Func. Anal., 52, 1, 48-79, (1983).


\bibitem{Tanno} Tanno, S., \emph{Variation problems on contact Riemannian manifolds}, Trans. Amer. Math. Soc. (1989), Vol. (1)314, 349-379. 



\bibitem{T2}Taylor, M. E. \textit{Partial differential equations. II,} Applied Mathematical Sciences \textbf{116}, Springer-Verlag, New York (1996) 

\bibitem{Tondeur} P.  Tondeur, \emph{Foliations on Riemannian manifolds}.  Universitext. Springer-Verlag, New York, 1988. xii+247 pp

\bibitem{Villani1} C. Villani: \emph{Hypocoercivity},  Mem. Amer. Math. Soc. 202 (2009), no. 950.

\bibitem{Vilms} J. Vilms, \emph{Totally geodesic maps}, Journal of differential geometry, vol. 4, (1970), 73-79

\bibitem{FYW2}F-Y Wang: \emph{Generalized Curvature Condition for Subelliptic Diffusion Processes}, http://arxiv.org/pdf/1202.0778v2

\bibitem{FYW3}F-Y Wang: \emph{Analysis for diffusion processes on Riemannian manifolds}, Advanced Series on Statistical Science and Applied Probability, Vol. 18. World Scientific, (2014).

\bibitem{Wang1}
J. Wang: \emph{Sub-Riemannian heat kernels on model spaces and curvature-dimension inequalities on contact manifolds}, Phd dissertation, Purdue University, (2014)

\bibitem{Wang2}
J. Wang: \emph{The Subelliptic Heat Kernel on the CR hyperbolic spaces}, Preprint, 2014

\bibitem{Wu}
L. Wu, \emph{Large and moderate deviations and exponential convergence for stochastic damping Hamiltonian systems}. Stochastic Process. Appl. 91 (2001), no. 2, 205Ð238



\end{thebibliography}
\end{document}